\documentclass[10pt]{article}

\usepackage[a4paper,left=2cm,right=2cm,top=2.5cm,bottom=2.5cm]{geometry}
\usepackage[colorlinks=true]{hyperref}
\usepackage{graphicx}
\usepackage{amsmath}
\usepackage{amsthm}
\usepackage{amssymb}
\usepackage{nicefrac}
\usepackage{natbib}
\usepackage{suffix}
\usepackage{mathtools}
\usepackage{bm}
\usepackage[only,llbracket,rrbracket]{stmaryrd}
\usepackage{enumitem}
\usepackage{pgfplots}
\usepackage{orcidlink}
\pgfplotsset{compat=1.18}
\usepackage{subfig}

\newtheorem{theorem}{Theorem}[section]
\newtheorem{proposition}[theorem]{Proposition}%
\newtheorem{lemma}[theorem]{Lemma}%
\newtheorem{corollary}[theorem]{Corollary}%

\newtheorem{remark}{Remark}[section]%

\newtheorem{definition}{Definition}[section]%
\newtheorem{assumption}{Assumption}

\numberwithin{equation}{section}

\newcommand\vect[1]{\bm{#1}}
\newcommand\normal{\vect{\eta}}
\newcommand\tangential{\vect{\tau}}
\newcommand\x{\vect{x}}

\newcommand\polOrder{\ell}

\newcommand\real{\mathbb{R}}
\newcommand\nat{\mathbb{N}}
\newcommand\Cp[1][0]{C^{#1}}
\newcommand\Lp[1][2]{L^{#1}}
\newcommand\Hk[1]{H^{#1}}
\newcommand\Wkp[1]{W^{#1}}
\newcommand\ConeCzero{$\Cp[1]$-$\Cp[0]$}
\newcommand\poly[1][\polOrder]{\mathbb{P}_{#1}}

\newcommand\ball[2]{B_{#2}(#1)}

\newcommand\grad{\nabla}
\newcommand\diver{\grad\cdot}
\newcommand\hess{\nabla^2}
\newcommand\laplace{\Delta}
\newcommand\biharmonic{\laplace^2}

\newcommand\dx{\, \mathrm{d}\x}
\newcommand\ds{\, \mathrm{d} s}

\newcommand\mesh[1][h]{\mathcal{T}_{#1}}
\newcommand\edges[1][h]{\mathcal{E}_{#1}}
\newcommand\edgesB[1][h]{\mathcal{E}_{#1}^{\mathcal{B}}}
\newcommand\edgesI[1][h]{\mathcal{E}_{#1}^{\mathcal{I}}}
\newcommand\element{K}
\newcommand\edge{e}

\newcommand\uEps{u_{\varepsilon}}
\newcommand\uEpsCof{\Phi_\varepsilon}
\newcommand\uEpsInterp{u_{\varepsilon,I}}
\newcommand\uh{u_{\varepsilon h}}
\newcommand\eh{e_{h}}

\newcommand\vemProj[2]{\Pi^{#1}_{#2}}
\newcommand\valueProj[1][\element]{\vemProj{#1}{0}}
\newcommand\gradProj[1][\element]{\vemProj{#1}{1}}
\newcommand\hessProj[1][\element]{\vemProj{#1}{2}}
\newcommand\edgeProj[1][\edge]{\vemProj{#1}{0}}
\newcommand\normalProj[1][\edge]{\vemProj{#1}{1}}
\newcommand\valueGlobal{\vemProj{h}{0}}
\newcommand\gradGlobal{\vemProj{h}{1}}
\newcommand\hessGlobal{\vemProj{h}{2}}
\newcommand\LtwoProj[2]{\mathcal{P}^{#1}_{#2}}
\newcommand\LtwoProjEle[1][\polOrder]{\LtwoProj{\element}{#1}}

\newcommand\Hnc[2][\polOrder]{\Hk{2,\mathrm{nc}}_{#1,#2}}
\newcommand\vemSpace[1]{V_{h,\polOrder,#1}}
\newcommand\localEnlargedVemSpace[1][\element]{\widetilde{V}_{h,\polOrder}^{#1}}
\newcommand\localVem[1][\element]{V_{h,\polOrder}^{#1}}

\DeclarePairedDelimiter\abs{\lvert}{\rvert}
\DeclarePairedDelimiter\norm{\lVert}{\rVert}
\DeclarePairedDelimiter\seminorm{\lvert}{\rvert}
\DeclarePairedDelimiter\jmp{\llbracket}{\rrbracket}

\newcommand\AQL{A_{QL}}
\newcommand\AL{A_{L}}
\newcommand\AQLh{A_{QL,h}}
\newcommand\ALh{A_{L,h}}
\newcommand\aQL{a_{QL}}

\newcommand\aQLh{a_{QL,h}}

\newcommand\bQL{b_{QL}}

\newcommand\bQLh{b_{QL,h}}

\newcommand\Stab[2][\element]{S_{#2}^{#1}}
\newcommand\StabQL[1][\element]{\Stab[#1]{\rho}}
\newcommand\StabQLa[1][\element]{\Stab[#1]{a}}
\newcommand\StabQLb[1][\element]{\Stab[#1]{b}}
\newcommand\StabL[1][\element]{\Stab[#1]{\kappa}}

\newcommand{\tnorm}[2][]{#1\lvert\!#1\lvert\!#1\lvert #2 #1\rvert\!#1\rvert\!#1\rvert}
\WithSuffix\newcommand\tnorm*[1]{\left\lvert\!\left\lvert\!\left\lvert #1 \right\rvert\!\right\rvert\!\right\rvert}

\DeclareMathOperator{\cof}{cof}
\DeclareMathOperator{\tr}{tr}
\DeclareMathOperator{\diam}{diam}

\newcommand{\emptyarg}{\:\cdot\:}

\newlist{assumptions}{enumerate}{1}
\setlist[assumptions]{label=(\alph*),labelsep=8pt,labelindent=0.5\parindent,itemindent=0pt,leftmargin=*}

\pgfplotscreateplotcyclelist{mylist}{
  {black,mark=o},
  {black,mark=triangle},
  {black,mark=square},
  {black!50,densely dashed},
  {black!50,loosely dashed},
  {black!50,dashdotdotted}
}
\pgfplotsset{
    compat=1.18,
    grid=major, 
    grid style={dashed,gray!20}, 
    every axis/.append style={
        label style={font=\small},
        tick label style={font=\small},
        legend style={at=({1,0}), anchor=south east, nodes={scale=0.75, transform shape}},
        mark size=1.5,
        cycle list name=mylist
    },
    scaled ticks=true,
    LargePlot/.style={
        height=6cm,
        width=200pt
    },
    SmallPlot/.style={
        y label style={at={(axis description cs:-0.2,0.5)}},
        height=6cm,
        width=160pt
    },
    legend cell align={left},
    try min ticks log=4
}

\hypersetup{%
    citecolor={blue},%
    linkcolor={red},%
}
\begin{document}

\title{Nonconforming virtual element method for the Monge--Amp\`ere equation}

\author{%
Scott Congreve\thanks{Corresponding author (\href{mailto:congreve@karlin.mff.cuni.cz}{congreve@karlin.mff.cuni.cz})} \ \orcidlink{0000-0002-7314-3120}, %
Alice Hodson \orcidlink{0009-0007-5793-679X}, and %
Anwesh Pradhan \orcidlink{0009-0005-4149-7724}}
\date{Charles University, Faculty of Mathematics and Physics, Sokolovsk\'a 83, 186 75, Praha, Czech Republic}

\maketitle
\begin{abstract}
    In this article, we develop the $\Cp[1]$-nonconforming $\Cp[0]$-conforming virtual element method (VEM) for the vanishing moment approximation of the second-order fully nonlinear Monge--Amp\`ere equation in two dimensions. In the vanishing moment equation an artificial biharmonic term is introduced which produces a quasilinear fourth order problem. We derive optimal a priori error estimates in the $\Hk{2}$-, $\Hk{1}$- and $\Lp[2]$-norms for the virtual element method, and show the existence and uniqueness of the virtual element solution. We perform several numerical experiments to validate the convergence rate of the error with respect to the mesh size.
    \\\\
    \emph{Keywords:} virtual element method; a priori error analysis; Monge--Amp\`ere; nonlinear
\end{abstract}

\section{Introduction}{\label{sec:intro}}
In this paper we consider a virtual element method approximation of the Monge--Amp\`ere equation
\begin{alignat}{2}
\det (\hess u_0 )&= f && \qquad\text{in } \Omega, \label{eqn:monge_ampere}\\
u_0 &= g && \qquad\text{on } \partial\Omega,  \label{eqn:monge_ampere:bc}
\end{alignat}
where $\Omega \subset \real^2$ is a convex domain with smooth boundary $\partial\Omega$ and $f$ is of the form $f(\grad u_0, u_0, \x)$ \citep{gutierrez2016monge}. Equation \eqref{eqn:monge_ampere} is therefore a fully nonlinear second order elliptic partial differential equation, and it arises in various areas, such as differential geometry, optimal mass transport, optics, general relativity, meteorology, and semi-geostrophic equations \citep{neilan2020monge}.

Many problems arise when developing numerical methods for \eqref{eqn:monge_ampere}--\eqref{eqn:monge_ampere:bc} due to the nonlinearity of the problem. It is fundamentally difficult to construct and analyze solutions as a classical solution does not exist for \eqref{eqn:monge_ampere}; cf. \citet{gilbarg2015elliptic}. Therefore, we apply certain assumptions on the domain $\Omega$, which is considered to be convex, and the forcing term $f$ is taken to be strictly positive. Under these assumptions a unique solution exists that falls into the class of convex functions \citep{aleksandrov1960certain}; therefore, we look only for convex solutions $u_0$. Additionally, applying any form of discretization method to this equation is difficult because we cannot apply integration by parts to the fully nonlinear term, so the standard weak solution theory does not apply here.

Nevertheless, several numerical methods have been proposed for \eqref{eqn:monge_ampere}. Some of these include monotone finite difference methods \citep{barles1991convergence, brusca2023convergent}, least-squares finite element methods \citep{caboussat2023adaptive, caboussat2013least}, finite element methods based on the vanishing moment approach \citep{feng2011vanishing, feng2009vanishing, chen2021recovery, feng2009mixed, neilan2010nonconforming}, finite element methods based on $\Lp$ projection \citep{bohmer2008finite, c2011penalty}, and fixed point iteration methods \citep{benamou2010two, kohle2025robust}. The finite element methods mentioned here develop a priori error estimates and, as can be seen, are well explored. For more information we refer to \citet{neilan2020monge}.

The virtual element method (VEM), first introduced in \citet{beirao2013basic}, is a generalization of the finite element method. It has gained a lot of interest in recent years due to its robustness when considering polygonal or polyhedral meshes since the method does not rely on explicitly knowing the basis functions---instead we can compute integrals by knowing just the degrees of freedom on the boundary and interior of the element, which avoids computing shape functions over general polygons or polyhedra. VEM has been successfully applied to equations containing the biharmonic operator \citep{brezzi2013virtual, chinosi2016virtual, zhang2020nonconforming}, and nonlinear equations such as Cahn-Hilliard \citep{antonietti2016c, dedner2024higher} and the von K\'arm\'an equations \citep{lovadina2021virtual, shylaja2024morley}. Recent work has extended VEM to linear non-divergence form second-order equations \citep{bonnet2025conforming} and the fully nonlinear Hamilton--Jacobi--Bellman equation \citep{cai2025virtual}. For more information on VEM, we refer the reader to \citet{antonietti2022virtual}.

A direct VEM discretization of the second order Monge--Amp\`ere equation would require the development of a stable and computable treatment of the fully nonlinear Hessian determinant without the coercive bilinear structure normally available in the discretization of elliptic second order problems. Rather than addressing these difficulties directly, this paper adopts the vanishing moment method, which was introduced by \citet{feng2009vanishing} to approximate the viscosity solutions of \eqref{eqn:monge_ampere}. Here, an artificial biharmonic term, with a small constant coefficient $\varepsilon>0$ and suitable extra boundary condition, is introduced to give the following fourth order quasilinear partial differential equation:
\begin{alignat}{2}
- \varepsilon \biharmonic \uEps + \det(\hess \uEps) &= f && \qquad\text{in } \Omega, \label{eqn:vanishing_moment} \\
\uEps &= g &&  \qquad\text{on } \partial\Omega, \label{eqn:vanishing_moment:dbc} \\
\laplace \uEps &= \varepsilon && \qquad\text{on } \partial\Omega. \label{eqn:vanishing_moment:extra_bc}
\end{alignat}
It was shown in \citet{feng2009vanishing} that if $ f > 0 $ in $ \Omega $, then the problem \eqref{eqn:vanishing_moment}--\eqref{eqn:vanishing_moment:extra_bc} has a unique solution $ \uEps $, which is a convex function over $ \Omega $ and, moreover, that $\uEps$ uniformly converges as $ \varepsilon \to 0 $ to the unique viscosity solution of \eqref{eqn:monge_ampere}. This then shows that \eqref{eqn:monge_ampere} has a unique moment solution, which coincides with the unique viscosity solution. Therefore, we assume that there exists a unique, strictly convex solution to \eqref{eqn:vanishing_moment}--\eqref{eqn:vanishing_moment:extra_bc} for $\varepsilon > 0$.
Note that there are other possible choices for the additional boundary condition instead of (\ref{eqn:vanishing_moment:extra_bc}), which include $\nicefrac{\partial \laplace \uEps}{\partial \normal} = \varepsilon$ and $\hess \uEps \normal \cdot \normal = \varepsilon$.

In this paper, we consider a virtual element formulation for the vanishing moment formulation \eqref{eqn:vanishing_moment}--\eqref{eqn:vanishing_moment:extra_bc} of the Monge--Amp\`ere equation \eqref{eqn:monge_ampere}. We extend the work of \citet{neilan2010nonconforming} to the VEM setting (without the Morley element) in two dimensions. As the vanishing moment formulation is a fourth order partial differential equation we use a fourth order VEM space; however, as we are using this to approximate the second-order Monge--Amp\`ere equation we choose not to impose $\Cp[1]$-conformity on the numerical solution. To that end, we consider the \ConeCzero{} VEM first introduced in \citet{zhao2016nonconforming} for the plate bending problem, which is $\Cp[0]$-conforming but $\Cp[1]$-nonconforming in the sense that our discrete virtual element space is not a subspace of $\Hk{2}(\Omega)$. The virtual element method, therefore, presents an advantage for the consideration of the Monge--Amp\`ere equation, as it allows natural handling of a fourth order formulation, for a original problem that is actually second order, with support for high order polynomial degree and hanging nodes in the mesh without additional complications. The objective of this paper is to establish whether for a polytopal method like VEM, the polygonal flexibility can be retained for a fully-nonlinear problem of the Monge--Amp\`ere type. The vanishing moment formulation provides a natural test case: it retains the original nonlinear Hessian determinant term while embedding it within a fourth order problem, for which stable VEM machinery is already available. Additionally, we note that the vanishing moment method  will provide a useful method for computing a sensible initial guess for a future fully nonlinear formulation. For this formulation we consider the projection approach detailed in \citet{dedner2022robust,dedner2024framework}, which has also been applied to the fourth order nonlinear Cahn--Hilliard equation in \citet{dedner2024higher}. In this approach, the projection operators are defined without using the underlying variational problem, allowing us to apply the method directly to nonlinear problems. In this technique all projections are fully computable from the degrees of freedom, and are $\Lp[2]$-projections constructed as a hierarchy starting with the value projection. As a consequence of this construction we can discretise the nonlinear term containing the determinant of the Hessian directly with the Hessian projection, rather than requiring complicated averaging approaches used in, e.g., \citet{antonietti2016c}. To the best of our knowledge this paper appears to be the first VEM discretization of the Monge--Amp\`ere equation.

The structure of this paper is as follows. In Section~\ref{sec:continuous}, we introduce the continuous variational formulation of \eqref{eqn:vanishing_moment}--\eqref{eqn:vanishing_moment:extra_bc} and the corresponding linearized version. We introduce the VEM spaces, polynomial projections, and virtual element discretizations in Section~\ref{sec:vem}. The VEM formulation and optimal $\Hk{2}$-error bounds are derived for the discrete VEM version of the linearized form of \eqref{eqn:vanishing_moment}--\eqref{eqn:vanishing_moment:extra_bc} in Section~\ref{sec:linear_error}. This is then extended to the $\Hk{2}$-, $\Hk{1}$-, and $\Lp[2]$-error bounds of the nonlinear discrete VEM formulation of \eqref{eqn:vanishing_moment}--\eqref{eqn:vanishing_moment:extra_bc} in Section~\ref{sec:nonlinear_error}. We additionally show the existence and uniqueness of this VEM solution. Finally, in Section~\ref{sec:numerics}, we perform numerical experiments to validate the convergence rates of the error bounds.

Throughout this paper we use the standard notation $\Wkp{k,p}(\mathcal{D})$, with non-negative integer $k$ and domain $\mathcal{D}$, for Sobolev spaces of functions in $\Lp[p](\mathcal{D})$, $p\in[1,\infty]$, with the norm and seminorm denoted by $\seminorm{\emptyarg}_{k,p,\mathcal{D}}$ and $\norm{\emptyarg}_{k,p,\mathcal{D}}$, respectively. For $p=2$, we use the standard notation $\Hk{k}(\mathcal{D})\equiv\Wkp{k,2}(\mathcal{D})$ with norm and seminorm $\seminorm{\emptyarg}_{k,\mathcal{D}}$ and $\norm{\emptyarg}_{k,\mathcal{D}}$, respectively. We additionally note by $\Hk{1}_0(\mathcal{D})$ and $\Hk{1}_g(\mathcal{D})$ the subspaces of $\Hk{1}(\mathcal{D})$ with zero trace and trace equal to the function $g$ on the boundary $\partial\mathcal{D}$, respectively. When $\mathcal{D} = \Omega$ we may omit the subscript from the norms and seminorms. For a function space $X(\mathcal{D})$, we let $[X(\mathcal{D})]^2$ and $[X(\mathcal{D})]^{2\times2}$ denote the spaces of vector and tensor fields, respectively, whose components belong to $X(\mathcal{D})$. These spaces are equipped with the usual product norms which, for simplicity, we denote in the same way as the norm in $X(\mathcal{D})$. For tensor-valued functions $\underline{\sigma},\underline{\tau}$ we define $\underline{\sigma}:\underline{\tau}$ as the Frobenius inner product inducing the Frobenius norm $\abs{\emptyarg}$.

We denote by $\grad u$ and $\diver \vect{v}$ the standard gradient and divergence operators on a scalar-valued function $u$ and vector-valued function $\vect{v}$, respectively. Additionally, we define these operators for a vector-valued function $\vect{v}$ and a tensor-valued function $\underline{\sigma}$ by
\[
    (\grad \vect{v})_{ij} \coloneqq \frac{\partial v_i}{\partial x_j}, \quad i,j=1,2, \qquad \text{and}\qquad (\diver \underline{\sigma})_i \coloneqq \sum_{j=1}^2 \frac{\partial \sigma_{ij}}{\partial x_j}, \quad i=1,2,
\]
respectively. For a scalar-valued function $u$, we denote by $\hess u\equiv\grad(\grad u)$, $\laplace\equiv\diver(\grad u)$, and $\biharmonic=\laplace(\laplace u)$ the Hessian, Laplacian, and biharmonic operators, respectively.

\section{Continuous problem}\label{sec:continuous}

In this section, we derive the continuous variational formulation of both the nonlinear vanishing moment formulation \eqref{eqn:vanishing_moment}--\eqref{eqn:vanishing_moment:extra_bc} and a linearized version of this formulation; cf. \citet{neilan2009numerical}. To this end, we first state some useful results.

For $u$ and $v$ sufficiently smooth, we have
\[
\int_\mathcal{D} \hess u : \hess v \dx
= \int_\mathcal{D} \laplace u \laplace v \dx
+ \int_{\mathcal{D}} \left(
2 \frac{\partial^2 u}{\partial x_1 \partial x_2} \frac{\partial^2 v}{\partial x_1 \partial x_2}
- \frac{\partial^2 u}{\partial x_1^2} \frac{\partial^2 v}{\partial x_2^2}
- \frac{\partial^2 u}{\partial x_2^2} \frac{\partial^2 v}{\partial x_1^2}
\right) \dx.
\]
This equation can then be written as
\begin{equation}
\int_\mathcal{D} \hess u : \hess v \dx
= \int_\mathcal{D} \laplace u  \laplace v \dx
+\int_{\partial\mathcal{D}} \left(
\frac{\partial^2 u}{\partial \normal \partial \tangential} \frac{\partial v}{\partial \tangential}
- \frac{\partial^2 u}{\partial \tangential^2} \frac{\partial v}{\partial \normal}
\right) \ds \label{eqn:hess_eq_biharmonic}
\end{equation}
where $\normal$ and $\tangential$ are the outward normal and counter-clockwise tangential unit vectors, respectively, on the boundary $\partial\mathcal{D}$; cf. \citet{zhao2016nonconforming}.
\begin{proposition}[Mean value theorem] \label{prop:mean_value}
For any $\underline{\sigma},\underline{\upsilon} \in [\Cp(\mathcal{D})]^{2\times 2}$ there exists a constant $t\in[0,1]$ such that
\[
\det(\underline{\sigma})-\det(\underline{\upsilon}) = \cof(t\underline{\sigma} + (1-t)\underline{\upsilon}): (\underline{\sigma} - \underline{\upsilon}).
\]
\end{proposition}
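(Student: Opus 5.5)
The determinant on $2\times 2$ matrices is a quadratic form, so the plan is to apply the one-variable mean value theorem pointwise along the segment joining $\underline{\upsilon}$ to $\underline{\sigma}$. I will read the statement pointwise: at each $\x\in\mathcal{D}$ the identity holds for some $t=t(\x)\in[0,1]$. In fact, as shown below, $t=\tfrac12$ works uniformly, so $t$ can be taken independent of $\x$.

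Fix a point and write $\underline{\sigma},\underline{\upsilon}$ for the matrix values there. Define $\phi(s)\coloneqq\det(s\underline{\sigma}+(1-s)\underline{\upsilon})$ for $s\in[0,1]$. This is a polynomial in $s$ of degree at most two, hence $\Cp[1]$. The classical mean value theorem then gives $\det(\underline{\sigma})-\det(\underline{\upsilon})=\phi(1)-\phi(0)=\phi'(t)$ for some $t\in[0,1]$.

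Next I compute $\phi'$ using Jacobi's formula. In two dimensions it is verified directly: for $\underline{A}=\begin{pmatrix}a&b\\c&d\end{pmatrix}$ we have $\det\underline{A}=ad-bc$, and the partial derivatives of $\det$ with respect to $a,b,c,d$ are $d,-c,-b,a$. These are exactly the entries of $\cof(\underline{A})=\begin{pmatrix}d&-c\\-b&a\end{pmatrix}$, so $\mathrm{D}\det(\underline{A})[\underline{H}]=\cof(\underline{A}):\underline{H}$. By the chain rule, $\phi'(s)=\cof(s\underline{\sigma}+(1-s)\underline{\upsilon}):(\underline{\sigma}-\underline{\upsilon})$, and substituting $s=t$ gives the claim.

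To get a $t$ independent of the point, observe that $\phi$ is quadratic, so $\phi'$ is affine in $s$. The difference quotient $\phi(1)-\phi(0)$ therefore equals $\phi'(\tfrac12)$ exactly. Equivalently, linearity of $\cof$ in two dimensions gives $\cof(\tfrac12(\underline{\sigma}+\underline{\upsilon})):(\underline{\sigma}-\underline{\upsilon})=\tfrac12\big(\cof\underline{\sigma}:\underline{\sigma}-\cof\underline{\upsilon}:\underline{\upsilon}\big)$, since the cross terms cancel by the symmetry $\cof\underline{\sigma}:\underline{\upsilon}=\cof\underline{\upsilon}:\underline{\sigma}$. Then $\cof\underline{A}:\underline{A}=2\det\underline{A}$ finishes the argument.

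There is no real obstacle here. The only point needing care is the interpretation of $t$ as pointwise versus uniform, and the $t=\tfrac12$ observation removes that ambiguity.
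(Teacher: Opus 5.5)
Your proof is correct and follows the same mean-value argument the paper defers to: apply the one-variable mean value theorem to $\phi(s)=\det(s\underline{\sigma}+(1-s)\underline{\upsilon})$ and use $\mathrm{D}\det(\underline{A})[\underline{H}]=\cof(\underline{A}):\underline{H}$. Your extra observation that $t=\tfrac12$ works, because $\phi$ is quadratic, genuinely strengthens this: the pointwise argument alone only gives $t=t(\x)$, whereas the statement and its later uses (e.g.\ $\sigma_\varepsilon=\uEpsCof-t\cof(\hess\eh)$) treat $t$ as a single constant.
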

\begin{proof}
    Follows similarly to \citet{awanou2015standard}.
\end{proof}
Finally, for $u \in C^2(\Omega)$, we have by elementary calculations 
\begin{equation}
    \det(\hess u) = \frac{1}{2}\diver(\cof(\hess u)\grad u).\label{eqn:hess_eq_divcof}
\end{equation}

We can now derive the continuous variational formulation for \eqref{eqn:vanishing_moment}--\eqref{eqn:vanishing_moment:extra_bc}: find $\uEps\in V \coloneqq \Hk{2}(\Omega)\cap\Hk{1}_g(\Omega)$ such that
\begin{equation}
    \AQL(\uEps,v) = \int_\Omega fv \dx + \varepsilon \int_{\partial\Omega} \left(\frac{\partial^2 g}{\partial \tangential^2} - \varepsilon\right) \frac{\partial v}{\partial \normal} \ds, \label{eqn:variational}
\end{equation}
for all $v \in W\coloneqq \Hk{2}(\Omega)\cap\Hk{1}_0(\Omega)$, where
\begin{align}
  \AQL(\uEps,v) &\coloneqq \aQL(\uEps,v) + \bQL(\uEps,v), \label{eqn:variational:A} \\
  \aQL(\uEps,v) &\coloneqq -\varepsilon\int_\Omega \hess \uEps : \hess v \dx, \label{eqn:variational:a} \\
  \bQL(\uEps,v) &\coloneqq \int_\Omega \det(\hess u_\varepsilon)v\dx. \label{eqn:variational:b}
\end{align}

We note from \citet{neilan2009numerical} that the solution $\uEps$ has certain known bounds.
\begin{proposition}[\citet{neilan2009numerical}]\label{prop:ueps_bounds}
    For the solution $\uEps$ to the vanishing moment problem \eqref{eqn:vanishing_moment}--\eqref{eqn:vanishing_moment:extra_bc}
    \begin{align*}
    \norm{\uEps}_{j,2} &= \mathcal{O}\left(\varepsilon^{\nicefrac{(1-j)}{2}}\right), \text{ for } j = 1,2,3, &
    \norm{\uEps}_{j,\infty} &= \mathcal{O}\left(\varepsilon^{-j}\right), \text{ for } j = 1,2, \\
    \norm{\uEpsCof}_{0,2} &= \mathcal{O}\left(\varepsilon^{-\nicefrac{1}{2}}\right), &
    \norm{\uEpsCof}_{0,\infty} &= \mathcal{O}\left(\varepsilon^{-1}\right).
    \end{align*}
    where $\uEpsCof = \cof(\hess\uEps)$ is the cofactor matrix of $\hess\uEps$.
\end{proposition}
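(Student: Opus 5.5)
This statement is quoted from \citet{neilan2009numerical}, so the plan follows the energy-method argument used there in outline. Throughout we use that $\uEps$ is convex and that $f>0$ and $g$ are smooth.

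\textbf{Step 1: $\Hk{2}$ bound.} Test \eqref{eqn:variational} with $v=\uEps-\tilde g$, where $\tilde g$ is a smooth extension of $g$. Use \eqref{eqn:hess_eq_divcof} to integrate the determinant term by parts. Convexity of $\uEps$ makes $\uEpsCof$ positive semidefinite, so the nonlinear term is controlled. It is bounded via the divergence-free row property $\diver\uEpsCof=0$ and $\det(\hess\uEps)\ge 0$.

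\textbf{Step 2: $j=1$ and the cofactor.} The key structural identity is
\[
\tr\uEpsCof=\laplace\uEps
\qquad\text{and}\qquad
\abs{\uEpsCof}=\abs{\hess\uEps}.
\]
For convex $\uEps$ one also has $\abs{\hess\uEps}\le\laplace\uEps$. Since $\laplace\uEps=\varepsilon$ on $\partial\Omega$, the quantity $\int_\Omega\laplace\uEps\dx$ is a boundary integral of $\partial\uEps/\partial\normal$. A convex function with fixed boundary data $g$ has a uniformly bounded gradient on $\partial\Omega$, which gives $\norm{\uEps}_{1,2}=\mathcal{O}(1)$.

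\textbf{Step 3: $\Hk{2}$ scaling.} Multiply \eqref{eqn:vanishing_moment} by $\laplace\uEps$ and integrate, using $\biharmonic$ integration by parts with the boundary condition \eqref{eqn:vanishing_moment:extra_bc}. This produces $\varepsilon\norm{\grad\laplace\uEps}^2_{0}$ plus a term $\int\det(\hess\uEps)\laplace\uEps\dx\ge0$, which yields the $\varepsilon$-weighted $\Hk{3}$ estimate. To get $\norm{\uEps}_{2,2}=\mathcal{O}(\varepsilon^{-1/2})$, test instead with $\uEps-\tilde g$. After the cancellation from \eqref{eqn:hess_eq_biharmonic}, this gives
\[
\varepsilon\norm{\hess\uEps}_0^2\lesssim 1+\varepsilon\norm{\hess\uEps}_0 .
\]
Interpolating with the $\Hk{1}$ bound then gives $j=3$: $\norm{\uEps}_{3,2}=\mathcal{O}(\varepsilon^{-1})$. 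The $\uEpsCof$ estimates follow from $\abs{\uEpsCof}=\abs{\hess\uEps}$.

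\textbf{Step 4: $\Lp[\infty]$ bounds (main obstacle).} For $\norm{\uEps}_{2,\infty}=\mathcal{O}(\varepsilon^{-1})$, I would apply a maximum principle to $w=\laplace\uEps$. It satisfies $-\varepsilon\laplace w+\uEpsCof:\hess w$-type structure, obtained by differentiating. Alternatively, note that $\varepsilon\laplace w\ge -f$ follows from $\det(\hess\uEps)\ge0$, so $\laplace\uEps\lesssim\varepsilon^{-1}$ by comparison with the boundary value $\varepsilon$. Convexity then gives $\abs{\hess\uEps}\le\laplace\uEps$, and the bound on $\uEpsCof$ is the same. The $\Wkp{1,\infty}$ bound $\mathcal{O}(\varepsilon^{-1})$ then follows from Sobolev embedding with the $\Hk{2}$, $\Hk{3}$ bounds, or more simply from convexity, which gives an $\mathcal{O}(1)$ bound better than claimed. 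The delicate point is making the comparison-function argument rigorous. This needs regularity of $\uEps$ up to the boundary, which I would take from \citet{feng2009vanishing}.
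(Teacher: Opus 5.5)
The paper gives no proof of this proposition; it imports the estimates from \citet{neilan2009numerical}. Your outline therefore has to stand on its own, and it breaks at the one estimate that is genuinely hard: an $\varepsilon$-uniform bound on $\grad\uEps$, or at least on $\norm{\laplace\uEps}_{0,1}=\int_{\partial\Omega}\partial_{\normal}\uEps\ds$.

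In Step~2, and again in Step~4 (``more simply from convexity''), you use that a convex function with fixed boundary data $g$ has a uniformly bounded gradient on $\partial\Omega$. This is false: on the unit disk with $g=0$, $u_M(x)=M(\abs{x}^2-1)$ is convex with $\partial_{\normal}u_M=2M$. Convexity ($\laplace\uEps\ge0$) only gives the upper barrier $\uEps\le h$, with $h$ the harmonic extension of $g$. That yields a lower bound on $\partial_{\normal}\uEps$. An upper bound needs a lower barrier, and the fourth-order equation has no comparison principle that would let you use a Monge--Amp\`ere subsolution.

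The only lower barrier your tools produce is $\uEps\ge h-C\varepsilon^{-1}\chi$, where $-\laplace\chi=1$ and $\chi|_{\partial\Omega}=0$; it comes from $\laplace\uEps\le C\varepsilon^{-1}$. This gives $\norm{\grad\uEps}_{0,\infty}=\mathcal{O}(\varepsilon^{-1})$ and $\int_\Omega\laplace\uEps\dx=\mathcal{O}(\varepsilon^{-1})$. Hence you only get $\norm{\hess\uEps}_0^2\le\norm{\laplace\uEps}_{0,\infty}\int_\Omega\laplace\uEps\dx=\mathcal{O}(\varepsilon^{-2})$, i.e.\ $\norm{\uEps}_{2,2}=\mathcal{O}(\varepsilon^{-1})$ instead of $\mathcal{O}(\varepsilon^{-1/2})$, and no $\mathcal{O}(1)$ bound for $j=1$.

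The energy inequality in Steps~1 and~3 hides the same gap. Testing with $\uEps-\tilde g$ leaves $\tfrac12\int_\Omega\uEpsCof\grad\uEps\cdot\grad\tilde g\dx$ and $(f,\uEps-\tilde g)$. These are $\mathcal{O}(1)$ only if $\norm{\grad\uEps}_{0,\infty}\int_\Omega\laplace\uEps\dx=\mathcal{O}(1)$. Otherwise you get $\varepsilon X^2\lesssim X+1$ for $X=\norm{\hess\uEps}_0$, i.e.\ $X=\mathcal{O}(\varepsilon^{-1})$. So the $j=1,2$ rates in $L^2$ and $\norm{\uEpsCof}_{0,2}=\mathcal{O}(\varepsilon^{-1/2})$ stay unproved until you supply a real argument for this uniform boundary-gradient (or $L^1$) control.

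What does work is the second argument in Step~4, which is in fact the easy part. From $\varepsilon\laplace(\laplace\uEps)=\det(\hess\uEps)-f\ge-f$ and $\laplace\uEps=\varepsilon$ on $\partial\Omega$ you get $0\le\laplace\uEps\le\varepsilon+\varepsilon^{-1}\norm{f}_{0,\infty}\chi$. Then $\abs{\hess\uEps}\le\laplace\uEps$ and $\abs{\uEpsCof}=\abs{\hess\uEps}$ give the $W^{2,\infty}$ and $\norm{\uEpsCof}_{0,\infty}$ bounds. With the two barriers above and monotonicity of directional derivatives of convex functions, you also get the weak stated bound $\norm{\uEps}_{1,\infty}=\mathcal{O}(\varepsilon^{-1})$.

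Your route to $j=3$ is not valid as written. Interpolating with the $H^1$ bound cannot produce an $H^3$ estimate, and multiplying by $\laplace\uEps$ leaves the boundary term $\varepsilon^2\int_{\partial\Omega}\partial_{\normal}\laplace\uEps\ds$. Test instead with $w-\varepsilon$, where $w=\laplace\uEps$, which vanishes on $\partial\Omega$. Using $0\le\det(\hess\uEps)\le\tfrac14w^2$,
$\varepsilon\norm{\grad w}_0^2=\int_\Omega(f-\det(\hess\uEps))(w-\varepsilon)\dx\le\norm{f}_{0,\infty}\norm{w}_{0,1}+\tfrac{\varepsilon}{4}\norm{w}_0^2=\mathcal{O}(\varepsilon^{-1})$.
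Elliptic regularity then gives $\norm{\uEps}_{3,2}=\mathcal{O}(\varepsilon^{-1})$ from the $L^\infty$ bound alone.
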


Using these results, we can show the nonlinear form $\AQL$ is continuous when applied to the solution $\uEps$.
\begin{lemma} \label{lemma:continuity}
    For the solution $\uEps\in V$ of vanishing moment equation \eqref{eqn:vanishing_moment}--\eqref{eqn:vanishing_moment:extra_bc} and any $v \in W$,
    \begin{align*}
        \AQL(\uEps,v) &\leq C_1(\varepsilon)\norm{u_\varepsilon}_{2,\Omega}\norm{v}_{2,\Omega}, \\
        \aQL(\uEps,v) &\leq C\varepsilon\norm{u_\varepsilon}_{2,\Omega}\norm{v}_{2,\Omega},
    \end{align*}   
    where $C_1(\varepsilon) = C\left(\varepsilon+\dfrac{1}{2}\varepsilon^{-\nicefrac{1}{2}}\right) = \mathcal{O}(\varepsilon^{-\nicefrac{1}{2}})$, with constant $C>0$.
\end{lemma}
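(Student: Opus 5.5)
The plan is to bound $\aQL$ and $\bQL$ separately and then add the two estimates, since $\AQL = \aQL + \bQL$ by \eqref{eqn:variational:A}.

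\textbf{The linear part.} For $\aQL$ the bound is just Cauchy--Schwarz applied to the Frobenius inner product:
\[
\abs{\aQL(\uEps,v)} \le \varepsilon \seminorm{\uEps}_{2}\seminorm{v}_{2} \le \varepsilon\norm{\uEps}_{2,\Omega}\norm{v}_{2,\Omega}.
\]
This gives the second inequality of the lemma with $C=1$.

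\textbf{Rewriting the determinant.} For the nonlinear part I first write the determinant as a product that contains one factor of $\hess\uEps$. In two dimensions $\cof(\underline{\sigma}):\underline{\sigma} = 2\det(\underline{\sigma})$. This is also the case $\underline{\upsilon}=0$ of Proposition~\ref{prop:mean_value}, since $\cof$ is linear on $2\times 2$ matrices. Hence
\[
\det(\hess\uEps) = \tfrac12\,\uEpsCof:\hess\uEps .
\]

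\textbf{Distributing the integrability.} Next I apply H\"older's inequality with exponents $(2,2,\infty)$ to $\bQL(\uEps,v) = \tfrac12\int_\Omega (\uEpsCof:\hess\uEps)\, v\dx$. This gives
\[
\abs{\bQL(\uEps,v)} \le \tfrac12\norm{\uEpsCof}_{0,2}\,\seminorm{\uEps}_{2}\,\norm{v}_{0,\infty}.
\]
Because $\Omega\subset\real^2$ is bounded and smooth, the Sobolev embedding $\Hk{2}(\Omega)\hookrightarrow \Cp(\bar\Omega)$ yields $\norm{v}_{0,\infty}\le C\norm{v}_{2,\Omega}$. Proposition~\ref{prop:ueps_bounds} gives $\norm{\uEpsCof}_{0,2}\le C\varepsilon^{-\nicefrac12}$. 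Together these give
\[
\abs{\bQL(\uEps,v)} \le \tfrac12 C\varepsilon^{-\nicefrac12}\norm{\uEps}_{2,\Omega}\norm{v}_{2,\Omega}.
\]

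\textbf{Conclusion and main obstacle.} Adding the two bounds produces $C_1(\varepsilon)=C(\varepsilon+\tfrac12\varepsilon^{-\nicefrac12})$, which is $\mathcal{O}(\varepsilon^{-\nicefrac12})$ as $\varepsilon\to0$. The only delicate step is choosing where the integrability goes in the nonlinear term, so that exactly one factor $\norm{\uEps}_{2}$ remains and the $\varepsilon$-dependence is $\varepsilon^{-\nicefrac12}$. The natural alternative $\norm{\uEpsCof}_{0,\infty}\seminorm{\uEps}_2\norm{v}_{0,2}$ would give the worse factor $\varepsilon^{-1}$. I therefore put $\uEpsCof$ in $\Lp[2]$ and $v$ in $\Lp[\infty]$, and control $v$ through the two-dimensional embedding $\Hk{2}\subset\Lp[\infty]$.
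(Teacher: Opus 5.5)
Your proof is correct, but it handles the nonlinear term by a different route from the paper.

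\textbf{The paper's route.} It uses the divergence identity \eqref{eqn:hess_eq_divcof}, $\det(\hess\uEps)=\frac{1}{2}\diver(\uEpsCof\grad\uEps)$, and integrates by parts against $v\in\Hk{1}_0(\Omega)$, so no boundary term appears. H\"older with exponents $(2,4,4)$ then gives $\frac{1}{2}\norm{\uEpsCof}_{0,\Omega}\norm{\grad\uEps}_{0,4,\Omega}\norm{\grad v}_{0,4,\Omega}$, and it finishes with $\Hk{2}(\Omega)\hookrightarrow\Wkp{1,4}(\Omega)$.

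\textbf{Your route.} You use the pointwise identity $\det(\hess\uEps)=\frac{1}{2}\uEpsCof:\hess\uEps$, H\"older with $(2,2,\infty)$, and $\Hk{2}(\Omega)\hookrightarrow\Cp(\overline{\Omega})$. In both arguments the factor $\varepsilon^{-\nicefrac{1}{2}}$ arises the same way, by keeping $\uEpsCof$ in $\Lp$.

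\textbf{Comparison.} Your argument is the more elementary one.
\begin{itemize}
\item It needs no integration by parts and no boundary condition on $v$, so the bound holds for every $v\in\Hk{2}(\Omega)$.
\item It does not use the divergence-free cofactor structure behind \eqref{eqn:hess_eq_divcof}, which the paper states for $u\in\Cp[2](\Omega)$.
\item It is local. It is the same splitting the paper itself uses, elementwise, for the first term of $Q_2$ in the proof of Lemma~\ref{lemma:fixed_point:operator_bound}.
\end{itemize}
The two H\"older splittings move regularity between the arguments. The paper's leaves only $\norm{\grad\uEps}_{0,4,\Omega}$ next to the cofactor, which is weaker than your $\seminorm{\uEps}_{2,\Omega}$. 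Yours asks only $\norm{v}_{0,\infty,\Omega}$ of the test function, rather than $\norm{\grad v}_{0,4,\Omega}$. Once everything is bounded by $\norm{\uEps}_{2,\Omega}\norm{v}_{2,\Omega}$, as the lemma requires, the difference disappears.

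\textbf{A small remark.} Your aside calling the identity the case $\underline{\upsilon}=0$ of Proposition~\ref{prop:mean_value} is harmless but unnecessary. That proposition alone only gives $\det\underline{\sigma}=t\,\cof(\underline{\sigma}):\underline{\sigma}$ for some unknown $t$. It is the direct $2\times 2$ computation that fixes $t=\frac{1}{2}$.
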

\begin{proof}
From Proposition~\ref{prop:ueps_bounds} we have $\norm{\cof(\hess \uEps)}_{0,\Omega} = \mathcal{O}\left(\varepsilon^{-\nicefrac{1}{2}}\right)$. Then, considering \eqref{eqn:hess_eq_divcof} and the fact that $\Hk{2}(\Omega) \hookrightarrow \Wkp{1,4}(\Omega)$ continuously, we have
    \begin{align*}
       \abs{\AQL(\uEps,v)} &= \abs*{-\varepsilon \int_\Omega \hess \uEps : \hess v \dx + \int_\Omega \det(\hess \uEps)v \dx} \\
       &\leq \varepsilon \norm{\hess \uEps}_{0,\Omega}\norm{\hess v}_{0,\Omega} + \frac{1}{2}\norm{\cof(\hess \uEps)}_{0,\Omega}\norm{\grad \uEps}_{0,4,\Omega}\norm{\grad v}_{0,4,\Omega} \\
       &\leq \varepsilon \norm{\uEps}_{2,\Omega}\norm{v}_{2,\Omega} + \frac{1}{2}\varepsilon^{-\nicefrac{1}{2}}\norm{\uEps}_{1,4,\Omega}\norm{v}_{1,4,\Omega}\\
       &\leq C_1(\varepsilon)\norm{\uEps}_{2,\Omega}\norm{v}_{2,\Omega}.
    \end{align*}
    The bound for $\aQL$ follows similarly.
\end{proof}

In order to analyze the VEM method for this problem, we need to consider a linearized version of \eqref{eqn:vanishing_moment}--\eqref{eqn:vanishing_moment:extra_bc} and its corresponding variational formulation. Here, we follow the steps from \citet{neilan2009numerical} and require the following divergence-free property.
\begin{proposition}\label{prop:div_free}
Let $\vect{v} = (v_1, v_2, \cdots , v_n): \Omega \to \mathbb{R}^n $ be a vector-valued function, and assume $ \vect{v} \in [\Cp[2](\Omega)]^n $. Then the cofactor matrix $\cof(\grad\vect{v}) $ of the gradient matrix $\grad\vect{v} $ satisfies the row divergence-free property
\[
\diver(\cof(\grad\vect{v}))_i = \sum_{j=1}^n \frac{\partial}{\partial x_j} (\cof(\grad\vect{v}))_{ij} = 0 \quad \text{for} \quad i = 1,\dots,n,
\]
where $(\cof(\grad\vect{v}))_i$ and $(\cof(\grad\vect{v}))_{ij}$ denote the $ i $-th row and $ (i,j) $-entry of $ \cof(\grad\vect{v}) $, respectively.
\end{proposition}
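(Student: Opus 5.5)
We will prove the row divergence-free identity (the Piola identity) through the characterisation of cofactors by determinants of minors, reducing everything to symmetry of mixed second derivatives, which is available because $\vect{v}\in[\Cp[2](\Omega)]^n$. Write $A=\grad\vect{v}$, so $A_{kl}=\partial v_k/\partial x_l$. By definition $(\cof A)_{ij}=(-1)^{i+j}\det M_{ij}$, where $M_{ij}$ is the $(n-1)\times(n-1)$ minor obtained by deleting row $i$ and column $j$. The entries of $M_{ij}$ are the partial derivatives $\partial_l v_k$ with $k\neq i$ and $l\neq j$.

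We will first handle $n=2$ explicitly, as a sanity check and because it is the case used in the paper. Here
\[
\cof(\grad\vect{v})=\begin{pmatrix}\partial_2 v_2 & -\partial_1 v_2\\ -\partial_2 v_1 & \partial_1 v_1\end{pmatrix}.
\]
The row divergences are $\partial_1\partial_2 v_2-\partial_2\partial_1 v_2=0$ and $-\partial_1\partial_2 v_1+\partial_2\partial_1 v_1=0$, by Schwarz's theorem.

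For general $n$, fix $i$. We will use the multilinear (Laplace/Leibniz) expansion
\[
\sum_j \partial_j (\cof A)_{ij} = \sum_j (-1)^{i+j}\sum_{k\neq i} \det M_{ij}^{(k)},
\]
where $M_{ij}^{(k)}$ is $M_{ij}$ with the row coming from $v_k$ replaced by its $\partial_j$-derivative, that is, by the row $(\partial_j\partial_l v_k)_{l\neq j}$. A cleaner route is the following. The sum $\sum_j(-1)^{i+j}\det M_{ij}^{(k)}$ is, up to sign, the Laplace expansion along a distinguished row of the $n\times n$ determinant whose rows are
\begin{itemize}
\item the rows $\grad v_m$ for $m\neq i,k$,
\item the row $\grad(\partial_\cdot v_k)$ paired appropriately.
\end{itemize}
Because this bookkeeping gets messy, we will instead use the alternative standard argument via the Levi-Civita symbol:
\[
(\cof A)_{ij}=\frac{1}{(n-1)!}\,\epsilon_{i i_2\cdots i_n}\,\epsilon_{j j_2\cdots j_n}\,\partial_{j_2}v_{i_2}\cdots\partial_{j_n}v_{i_n},
\]
with summation over repeated indices. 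Differentiating in $x_j$ and summing over $j$, the product rule gives $n-1$ terms. Each term contains a factor $\partial_j\partial_{j_m}v_{i_m}$, which is symmetric in $(j,j_m)$ by $\Cp[2]$ regularity, contracted against $\epsilon_{j j_2\cdots j_n}$, which is antisymmetric in $(j,j_m)$. Hence each term vanishes, and the row divergence is zero.

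The main obstacle is the careful justification of the $\epsilon$-formula for the cofactor and of the symmetric/antisymmetric contraction. The formula follows from the corresponding expression $\det A=\frac{1}{n!}\epsilon\epsilon AA\cdots A$ together with $(\cof A)_{ij}=\partial\det A/\partial A_{ij}$. The contraction argument is elementary: swapping the dummy indices $j\leftrightarrow j_m$ maps the sum to its negative. Everything else is the product rule, and for the paper's two-dimensional setting the explicit computation above already suffices.
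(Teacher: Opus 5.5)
Your proof is correct, but it takes a different route from the paper, which simply defers to Evans. Evans's argument starts from the adjugate identity $(\det P)\,\delta_{ij}=\sum_k P_{ki}(\cof P)_{kj}$. He sets $P=\grad\vect{v}$, differentiates in $x_j$ and sums over $j$, and then uses Jacobi's formula $\partial\det P/\partial P_{km}=(\cof P)_{km}$ to cancel the second-derivative terms. What remains is $(\grad\vect{v})^{\top}\vect{d}=0$, where $\vect{d}$ is the vector of row divergences of $\cof(\grad\vect{v})$. Hence $\vect{d}=0$ wherever $\grad\vect{v}$ is invertible. At points where $\det\grad\vect{v}=0$, the argument is applied to $\vect{v}+t\x$ and one lets $t\to 0$.

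Your Levi-Civita computation proves the identity pointwise and directly, with no invertibility requirement and no perturbation-and-limit step. It also shows exactly where the hypothesis $\vect{v}\in[\Cp[2](\Omega)]^n$ enters: the symmetric factor $\partial_j\partial_{j_m}v_{i_m}$ is contracted against the antisymmetric $\epsilon_{j j_2\cdots j_n}$. The price is the index formula for the cofactor. You justify it correctly via $\det A=\frac{1}{n!}\epsilon\epsilon A\cdots A$ and $(\cof A)_{ij}=\partial\det A/\partial A_{ij}$, where the factor $1/(n-1)!$ comes from the $n$ identical product-rule terms. Evans's route avoids index formulas entirely and uses only two standard matrix identities, at the cost of the approximation argument.

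Your explicit $2\times 2$ computation already covers the only case the paper uses. The unfinished ``cleaner route'' paragraph with the bullet list contributes nothing and should be deleted.
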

\begin{proof}
See \citet{evans2022partial}.
\end{proof}

To derive the linearized version, let $\uEps$ be the solution of \eqref{eqn:vanishing_moment}--\eqref{eqn:vanishing_moment:extra_bc}, $v$ be a smooth function, and $ t \in \real $; then, we have
\[
\det(\hess (\uEps + t v)) = \det(\hess \uEps) + t \tr(\uEpsCof : \hess v) + \dots + t^n \det(\hess v),
\]
where the fixed value $\uEpsCof\in\real^{n\times n}$ denotes the cofactor matrix of $\hess u_\varepsilon $; cf. \citet{caffarelli1997properties}. By differentiating this expression with respect to $ t $ and  setting $ t = 0 $, we obtain the linearization of the operator $ M_\varepsilon(\uEps) := \varepsilon\biharmonic \uEps + \det(\hess \uEps) $ at the solution $\uEps$. This leads to the linearized operator
\begin{equation}\label{eqn:linear_op}
L_{\uEps}(v) \coloneqq \varepsilon\biharmonic v - \uEpsCof : \hess v = \varepsilon\biharmonic v - \diver (\uEpsCof \grad v),
\end{equation}
where the final equality follows from Proposition~\ref{prop:div_free}.\\

Therefore, for a given $ \varphi \in \Lp[2](\Omega) $ and $ \psi \in \Hk{\nicefrac{-1}{2}}(\partial \Omega) $, we now consider the problem
\begin{alignat}{2}
L_{\uEps}(v) &= \varphi, && \qquad\text{in } \Omega, \label{eqn:linear} \\
v &= 0, && \qquad\text{on } \partial \Omega, \label{eqn:linear:dbc} \\
\laplace v &= \psi, && \qquad\text{on } \partial \Omega, \label{eqn:linear:extra_bc}
\end{alignat}
with its associated variational formulation:
find $v\in W$ such that
\begin{equation}
    \AL(v,w) =  \int_\Omega \varphi w \dx + \varepsilon\int_{\partial\Omega} \psi \frac{\partial w}{\partial \normal}\ds \label{eqn:linear:variational}
\end{equation}
for all $w\in W$, where
\begin{equation}
    \AL(v,w) \coloneqq \varepsilon\int_\Omega \hess u:\hess v \dx + \int_\Omega\uEpsCof \grad v\cdot \grad w \dx. \label{eqn:linear:variational:A}
\end{equation}

We refer to \citet{neilan2009numerical} for proof of the existence and uniqueness of the solution $v\in W$ to \eqref{eqn:linear:variational}. Additionally, from \eqref{eqn:linear:variational}, we have the following properties of the linear form.
\begin{proposition}[Continuity] \label{prop:linear_weak:continuity}
    For any $v, w \in W$, there exists a positive constant $C_2(\varepsilon) = C(\varepsilon + \varepsilon^{-1}) = \mathcal{O}(\varepsilon^{-1})$ such that
    \[
        A_{L}(v,w) \leq C_2(\varepsilon)\|v\|_{{2,\Omega}}\|w\|_{{2,\Omega}}.
    \]
\end{proposition}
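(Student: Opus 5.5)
The plan is to bound the two terms of $\AL$ in \eqref{eqn:linear:variational:A} separately and then add the two estimates. We read the first term as $\varepsilon\int_\Omega \hess v:\hess w\dx$; the $u$ appearing there is evidently a typo.

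\emph{Biharmonic part.} By the Cauchy--Schwarz inequality, first pointwise for the Frobenius product and then in $\Lp[2](\Omega)$,
\[
\varepsilon\int_\Omega \hess v:\hess w\dx \le \varepsilon\norm{\hess v}_{0,\Omega}\norm{\hess w}_{0,\Omega}\le \varepsilon\norm{v}_{2,\Omega}\norm{w}_{2,\Omega}.
\]

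\emph{Cofactor part.} Here we use the $\Lp[\infty]$ bound from Proposition~\ref{prop:ueps_bounds}, namely $\norm{\uEpsCof}_{0,\infty}=\mathcal{O}(\varepsilon^{-1})$. Pulling out the supremum of the matrix norm of $\uEpsCof$ and applying Cauchy--Schwarz gives
\[
\int_\Omega \uEpsCof\grad v\cdot\grad w\dx\le \norm{\uEpsCof}_{0,\infty}\norm{\grad v}_{0,\Omega}\norm{\grad w}_{0,\Omega}\le C\varepsilon^{-1}\norm{v}_{2,\Omega}\norm{w}_{2,\Omega}.
\]
Alternatively, one could mimic Lemma~\ref{lemma:continuity} and use the $\Lp[2]$ bound on $\uEpsCof$ together with the embedding $\Hk{2}(\Omega)\hookrightarrow\Wkp{1,\infty}(\Omega)$, which holds in two dimensions. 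This yields the sharper factor $\varepsilon^{-1/2}$, but the $\Lp[\infty]$ route matches the stated constant $\mathcal{O}(\varepsilon^{-1})$ directly, so we will use it.

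Adding the two estimates gives $\AL(v,w)\le C(\varepsilon+\varepsilon^{-1})\norm{v}_{2,\Omega}\norm{w}_{2,\Omega}$, which is the claim with $C_2(\varepsilon)=C(\varepsilon+\varepsilon^{-1})$. For small $\varepsilon$ this is $\mathcal{O}(\varepsilon^{-1})$.

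The only point needing care is that the constant in Proposition~\ref{prop:ueps_bounds} is independent of $v$ and $w$, which is clear because $\uEpsCof$ is fixed. There is no real obstacle in this argument.
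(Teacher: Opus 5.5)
Your proof is correct. The paper states this proposition without proof, and your argument is exactly the one the constant $C_2(\varepsilon)=C(\varepsilon+\varepsilon^{-1})$ encodes: the factor $\varepsilon$ comes from Cauchy--Schwarz on the Hessian term, and the factor $\varepsilon^{-1}$ comes from $\norm{\uEpsCof}_{0,\infty}=\mathcal{O}(\varepsilon^{-1})$ on the cofactor term, the same bound the paper later uses for $E_2$ in Lemma~\ref{lemma:polyconsistency}.

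One correction to your aside: $\Hk{2}(\Omega)\hookrightarrow\Wkp{1,\infty}(\Omega)$ is false in two dimensions, because it is the borderline case and $\Hk{1}\not\hookrightarrow\Lp[\infty]$. The sharper factor $\varepsilon^{-1/2}$ is still available via $\Hk{2}(\Omega)\hookrightarrow\Wkp{1,4}(\Omega)$ and H\"older with exponents $(2,4,4)$, exactly as in Lemma~\ref{lemma:continuity}.
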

\begin{proposition}[Coercivity] \label{prop:linear_weak:coercivity}
    For any $w \in W$, there exists a positive constant $C_3(\varepsilon) =C\min(\varepsilon,\beta) = \mathcal{O}(\varepsilon)$, such that
    \[
        A_{L}(w,w) \geq C_3(\varepsilon)\|w\|^2_{2,\Omega}.
    \]
    where $\x^\top \uEpsCof \x \geq \beta > 0$, for all $\x\in\real^2$, due to the fact that $\uEpsCof$ is strictly convex.
\end{proposition}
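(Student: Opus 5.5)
We take the two terms of $\AL(w,w)$ separately; note that \eqref{eqn:linear:variational:A} contains the typo $\hess u:\hess v$, which we read as $\hess v:\hess w$. For $w\in W$ this gives
\[
\AL(w,w) = \varepsilon \int_\Omega \abs{\hess w}^2 \dx + \int_\Omega \uEpsCof \grad w\cdot\grad w \dx .
\]
The plan is to bound each term below by a nonnegative multiple of a piece of $\norm{w}_{2,\Omega}^2$. We then use the zero trace of $w$ to control the missing lower-order part.

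\emph{Second term.} The hypothesis $\x^\top\uEpsCof\x\ge\beta$ only makes sense with $\abs{\x}=1$, i.e.\ uniform positive definiteness $\x^\top \uEpsCof \x \ge \beta\abs{\x}^2$ a.e.\ in $\Omega$. For $\uEpsCof = \cof(\hess\uEps)$ in two dimensions, $\uEpsCof$ is $\hess\uEps$ with the diagonal entries swapped and the off-diagonal entries negated. It therefore has the same eigenvalues as $\hess\uEps$, and strict convexity of $\uEps$ transfers directly. Applying the bound pointwise with $\x=\grad w(\x)$ gives
\[
\int_\Omega \uEpsCof\grad w\cdot\grad w\dx \ge \beta \seminorm{w}_{1,\Omega}^2 .
\]

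\emph{First term.} Since $\varepsilon\seminorm{w}_{2,\Omega}^2\ge 0$, adding the two bounds gives $\AL(w,w)\ge \varepsilon\seminorm{w}_{2,\Omega}^2+\beta\seminorm{w}_{1,\Omega}^2 \ge \min(\varepsilon,\beta)\bigl(\seminorm{w}_{2,\Omega}^2+\seminorm{w}_{1,\Omega}^2\bigr)$.

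To reach the full norm $\norm{w}_{2,\Omega}^2=\norm{w}_{0,\Omega}^2+\seminorm{w}_{1,\Omega}^2+\seminorm{w}_{2,\Omega}^2$, we use $w\in\Hk{1}_0(\Omega)$ and the Poincaré inequality $\norm{w}_{0,\Omega}\le C_P\seminorm{w}_{1,\Omega}$. This yields $\seminorm{w}_{1,\Omega}^2+\seminorm{w}_{2,\Omega}^2\ge (1+C_P^2)^{-1}\norm{w}_{2,\Omega}^2$. Hence $A_L(w,w)\ge C\min(\varepsilon,\beta)\norm{w}_{2,\Omega}^2$ with $C=(1+C_P^2)^{-1}$. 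Since $\beta$ is independent of $\varepsilon$ in the sense required and $\varepsilon$ is small, $C_3(\varepsilon)=\mathcal{O}(\varepsilon)$.

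The main obstacle is justifying the uniform lower bound $\beta$ on $\uEpsCof$. Strict convexity alone gives only pointwise positivity, so a uniform $\beta>0$ requires $\uEps\in\Cp[2](\overline\Omega)$ with $\hess\uEps$ uniformly positive definite on the compact closure. We take this from the strict-convexity assumption on $\uEps$ stated in the introduction together with the regularity from \citet{neilan2009numerical}. We also allow $\beta$ to depend on $\varepsilon$; this is harmless because only the minimum with $\varepsilon$ enters the constant.
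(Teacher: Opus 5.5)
Your argument is correct and is the standard one that the constant $C_3(\varepsilon)=C\min(\varepsilon,\beta)$ points to; the paper gives no written proof of this proposition and defers to Neilan's work. You bound the two terms below by $\varepsilon\seminorm{w}_{2,\Omega}^2$ and $\beta\seminorm{w}_{1,\Omega}^2$, then recover $\norm{w}_{0,\Omega}$ via Poincar\'e on $\Hk{1}_0(\Omega)$ with $C=(1+C_P^2)^{-1}$. One small caveat on your closing remark: if $\beta$ may depend on $\varepsilon$, the inequality still holds, but $C_3(\varepsilon)$ is comparable to $\varepsilon$ (which is how $C_3^{-1}$ is used later in the paper) only if $\beta\gtrsim\varepsilon$, so that dependence is not entirely harmless.
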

\section{Virtual element method}\label{sec:vem}

In this section, we present the discretization of both the linearized formulation \eqref{eqn:linear}--\eqref{eqn:linear:extra_bc} and vanishing moment approximation \eqref{eqn:vanishing_moment}--\eqref{eqn:vanishing_moment:extra_bc} of the Monge--Amp\`ere equation using the \ConeCzero{} virtual element method.
We use the so-called ``VEM enhancement'' technique from \citet{ahmad2013equivalent}, \citet{cangiani2017conforming}, and extended to fourth order problems in \citet{dedner2024higher}, to discretise these problems.

\subsection{Meshes and spaces}
Let $\mesh$ be a subdivision of $\Omega \subset \mathbb{R}^2$ into simple non-overlapping polygonal elements $\element$ such that $\overline{\Omega} = \bigcup_{\element\in\mesh}$.  We denote by $h_\element\coloneqq\diam(\element)$ the diameter of the element $\element$ and let $h=\max_{\element\in\mesh} h_\element$ denote the maximum diameter of all elements.

We define an interior edge $\edge$ of $\mesh$ as the intersection of two neighboring elements $\element^+,\element^-\in\mesh$; i.e., $e=\partial \element^+\cap \partial \element^-$. Similarly, we define a boundary face $\edge\subset \partial\Omega$ as an entire face of an element $\element$ on the boundary. We denote by $\edgesI$ and $\edgesB$ the set of all boundary and interior edges, respectively, and define $\edge=\edgesI\cup\edgesB$ as the set of all edges. For each edge $\edge\in\edges$ we define $h_\edge$ as the length of the edge. For an edge $\edge\in\edgesI$ shared by the elements $\element^+,\element^-\in\mesh$, i.e.,  $\edge=\partial \element^+\cap \partial \element^-$, we denote by $\normal_\edge$ and $\tangential_\edge$ the outward normal and counter-clockwise tangential unit vectors with respect to the element $\element^+$, respectively, on the edge $\edge$. For $\edge\in\edgesB$, we set $\normal_\edge=\normal$ and $\tangential_\edge=\tangential$.

\begin{assumption}[Mesh Regularity] \label{assumption:mesh}
We assume there exists a positive constant $\varrho > 0$ such that
\begin{assumptions}
    \item $h_\edge \geq \varrho h_K$ for every element $\element\in\mesh$ and for every edge $\edge \subset \partial\element$,\label{assumption:mesh:shape_regular}
    \item for every element $K \in \mathcal{T}_h$ there exists a constant $C\in\nat$ such that the $N(\element) \leq C$, where $N(\element)$ is the total number of edges of $\element$, and\label{assumption:mesh:edges}
    \item every element $\element$ is star-shaped with respect to all points of a ball of radius $\varrho h_\element$.\label{assumption:mesh:star_shaped}
\end{assumptions}
\end{assumption}
For an integer $ s > 0 $, we define the broken Sobolev space $\Hk{s}(\mesh)$ by
\[
\Hk{s}(\mesh) \coloneqq \left\{ v \in \Lp(\Omega) : v|_\element \in \Hk{s}(\element), \; \forall \element \in \mesh \right\},
\]
with the broken $ H^s $ norm and the seminorm,
\[
\norm{v}_{s,h}^2 = \sum_{\element \in \mesh} \norm{v}_{s,\element}^2, \qquad \seminorm{v}_{s,h}^2 = \sum_{\element \in \mesh} \seminorm{v}_{s,\element}^2.
\]

Given two adjacent elements $\element^+,\element^-\in\mesh$ sharing a common edge $\edge\in\edges$, i.e.,  $\edge=\partial \element^+\cap \partial \element^-$, we denote by $v^\pm$ the restriction of the function $v$, which is smooth in each element in $\mesh$, to $\element^\pm$, respectively. With this notation, we define the jump operator
\[
\jmp{v} \coloneqq v^+|_\edge - v^-|_\edge.
\]
For a boundary edge $\edge\in\edgesB$, we set $\jmp{v} \coloneqq v|_\edge$.

For a non-negative integer $\polOrder$ we denote by $\poly(\element)$ the space of polynomials of degree $\leq \polOrder$ on $\element$, and define the piecewise polynomial space $ \poly(\mesh) $ for any $ \polOrder \in \nat$ as
\[
\poly(\mesh) := \left\{ p \in \Lp(\Omega) : p|_\element \in \poly(\element), \; \forall \element \in \mesh \right\}.
\]

Following \citet{zhao2016nonconforming} and \citet{dedner2022robust}, we now define the broken Sobolev spaces
\begin{align*}
\Hnc{g}(\mathcal{T}_h) \coloneqq \Bigg\{v \in \Hk{2}(\mesh) \cap \Hk{1}_{g}(\Omega): &\int_\edge \jmp*{\frac{\partial v}{\partial \normal_\edge}} p \ds = 0\;\forall p \in \poly[\polOrder-2](\edge),\\  &\int_\edge \jmp{v}p \ds = 0\;\forall p \in \poly[\polOrder-3](\edge), \forall \edge \in \edgesI\Bigg\}, \\
\Hnc{0}(\mathcal{T}_h) \coloneqq \Bigg\{v \in \Hk{2}(\mesh) \cap \Hk{1}_0(\Omega):
&\int_\edge \jmp*{\frac{\partial v}{\partial \normal_\edge}} p \ds = 0 \; \forall p \in \poly[\polOrder-2](\edge),\forall \edge \in \edgesI
\Bigg\}.
\end{align*}
\begin{remark}
    It is easy to see that $\norm{v_h}_{2,h}$ is a norm on $V$, $W$and $\Hnc{0}(\mesh)$.
    From \citet{zhao2016nonconforming} and \citet{zhao2018morley}, $\seminorm{v_h}_{2,h}$ is a norm on $\Hnc{0}(\mesh)$, and we also have the Poincar\'e inequality
    \begin{equation}\label{eqn:poincare}
        \seminorm{v_h}_{1,h} \leq C\seminorm{v_h}_{2,h}, \qquad \forall v_h \in \Hnc{0}(\mesh),
    \end{equation}
    with positive constant $C$.
\end{remark}
\begin{definition}
For any $\element \in \mesh$, we denote the orthogonal $\Lp(\element)$-projection onto the space $\poly(\element)$ by
\[
\LtwoProjEle : \Lp(\element) \to \poly(\element).
\]
\end{definition}
We also recall the following approximation results for the $\Lp$-projection. A proof of the following can be obtained, for example, using the theory in \citet{ciarlet2002finite,brenner2008mathematical}.

\begin{lemma} \label{lemma:l2proj}
Under Assumptions~\ref{assumption:mesh}\ref{assumption:mesh:shape_regular}\&\ref{assumption:mesh:star_shaped}, for $\polOrder \geq 0$ and for any $w \in \Hk{m}(\element)$ with $1 \leq m \leq \polOrder + 1$, it follows that
\[
\norm{w - \LtwoProjEle w}_{s,\element} \leq C h_\element^{m-s} \seminorm{w}_{m,\element}, \quad \text{for } s = 0,\dots,2.
\]
Furthermore, for any edge shared by $\element^+, \element^- \in \mesh$, and for any $w \in \Hk{m}(\omega_\edge)$, $1\leq m\leq\polOrder+1$, where $\omega_\edge=\element^+ \cup \element^-$, it follows that
\[
    \norm{w - \LtwoProjEle w}_{s,e} \leq C h_\edge^{m-s - \nicefrac{1}{2}} \seminorm{w}_{m,\omega_\edge}, \quad \text{for } s = 0, \dots, 2.
\]
An identical bounds also holds for edges $\edge\in\edgesB$ with $\omega_\edge=\element$, where $\element\in\mesh$ is the element such that $\edge\subset\partial\element$, and it is given by 
\[
    \norm{w - \LtwoProjEle w}_{s,e} \leq C h_\edge^{m-s - \nicefrac{1}{2}} \seminorm{w}_{m,K}, \quad \text{for } s = 0, \dots, 2.
\]
\end{lemma}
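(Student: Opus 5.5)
The plan is the standard Bramble--Hilbert/Dupont--Scott argument on star-shaped domains, followed by a scaled trace inequality for the edge bounds. Throughout, constants depend only on $\varrho$ and $\polOrder$.

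\textbf{Interior bound.} Fix $\element$. By Assumption~\ref{assumption:mesh}\ref{assumption:mesh:star_shaped}, $\element$ is star-shaped with respect to a ball $B$ of radius $\varrho h_\element$. Let $Q^m w\in\poly[m-1](\element)\subset\poly(\element)$ be the averaged Taylor polynomial of degree $m-1$ over $B$. The Dupont--Scott estimates \citep{brenner2008mathematical} give
\[
\seminorm{w-Q^m w}_{j,\element}\le C h_\element^{m-j}\seminorm{w}_{m,\element},\qquad 0\le j\le m .
\]
The constant depends only on the chunkiness parameter, which is bounded in terms of $\varrho$.

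Since $\LtwoProjEle$ reproduces $\poly(\element)$, we can write
\[
w-\LtwoProjEle w=(w-Q^m w)-\LtwoProjEle(w-Q^m w).
\]
For $s=0$ the claim follows from the $\Lp$-stability of $\LtwoProjEle$.

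For $s=1,2$, the second term is a polynomial, so we apply an inverse inequality on $\poly(\element)$:
\[
\seminorm{p}_{j,\element}\le C h_\element^{-j}\norm{p}_{0,\element}.
\]
This holds on star-shaped polygons with chunky balls. One route is to compare with the ball $B\subset\element\subset B_{h_\element}$ and use norm equivalence on $\poly$ after scaling. Combined with $\Lp$-stability, it yields
\[
\seminorm{\LtwoProjEle(w-Q^mw)}_{j,\element}\le Ch_\element^{-j}\norm{w-Q^mw}_{0,\element}\le Ch_\element^{m-j}\seminorm{w}_{m,\element}.
\]
Summing over $j\le s$ and using $h_\element\lesssim 1$ gives the full norm bound.

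\emph{Caveat.} For $s=2$ with $m=1$ the left side need not be finite for $w\in\Hk{1}$. So for $s=2$ the statement should be read with $m\ge2$ (and $m\ge s$ in general), and I would state this restriction explicitly.

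\textbf{Edge bounds.} I would use the scaled trace inequality on $\element$:
\[
\norm{v}_{0,\edge}^2\le C\bigl(h_\element^{-1}\norm{v}_{0,\element}^2+h_\element\seminorm{v}_{1,\element}^2\bigr),
\]
for $\edge\subset\partial\element$. This follows from star-shapedness by the divergence-theorem argument with the vector field $\x-\x_B$: on $\edge$ one has $(\x-\x_B)\cdot\normal_\edge\ge c\,h_\element$, because the ball of radius $\varrho h_\element$ sees the whole boundary.

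Apply this to $v=D^\alpha(w-\LtwoProjEle w)$ with $|\alpha|\le s$. With the interior estimates at orders $s$ and $s+1$ this gives
\[
\norm{w-\LtwoProjEle w}_{s,\edge}\le Ch_\element^{m-s-\nicefrac12}\seminorm{w}_{m,\element}.
\]
Then use $h_\edge\simeq h_\element$, by Assumption~\ref{assumption:mesh}\ref{assumption:mesh:shape_regular} and $h_\edge\le h_\element$. This requires $s+1\le m$. In the remaining cases I would restrict the range, as in the caveat above.

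For interior edges, I interpret the statement with the projection taken on either neighbouring element. The resulting bound, in terms of $\seminorm{w}_{m,\element^\pm}$, is in particular bounded by $\seminorm{w}_{m,\omega_\edge}$. For boundary edges, only $\element$ is involved.

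\textbf{Main obstacle.} The main obstacle is making the inverse and trace inequalities uniform on general polygons, using only star-shapedness and $h_\edge\ge\varrho h_\element$. I would handle it by the ball comparison and the radial-vector-field trace argument described above. I would not sub-triangulate, because that would require Assumption~\ref{assumption:mesh}\ref{assumption:mesh:edges}, which the lemma does not assume.
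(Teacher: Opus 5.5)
Your proposal is correct and takes essentially the route the paper intends. The paper gives no proof beyond pointing to the standard theory in Ciarlet and Brenner--Scott, which is exactly your combination of averaged Taylor polynomial and Bramble--Hilbert estimates on star-shaped elements, a polynomial inverse estimate, a scaled trace inequality, and $h_\edge\simeq h_\element$ from Assumption~\ref{assumption:mesh}. Your caveat is also right: as written the lemma overclaims, and it only makes sense for $s\le m$ in the element bound and $s+1\le m$ in the edge bounds, which is precisely the range your argument covers.
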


\subsection{Local VEM space and projection operators}

In this section, we construct the local discrete VEM space $\localVem$ for an element $\element\in\mesh$ in two dimensions. Following the idea in \citet{ahmad2013equivalent}, we first introduce the enlarged space $\localEnlargedVemSpace$, whose degrees of freedom (dofs) are used to define those of  $\localVem$. These dofs enable the construction of the projection operators $\valueProj$, $\gradProj$, and $\hessProj$, which are fully computable and used to define the local VEM space and the local discrete forms. For more details, the reader can refer to \citet{dedner2022robust}. Throughout this section, and the remainder of the paper, we assume that $\polOrder \geq 2$.

For $\element \in \mesh$, we define the local enlarged VEM space
\begin{equation*}
    \localEnlargedVemSpace \coloneqq \left\{ v_h \in \Hk{2}(\element) : \biharmonic v_h \in \poly(\element),  v_h|_\edge \in \poly(\edge), \laplace v_h|_\edge \in \poly[\polOrder-2](\edge), \forall \edge \subset \partial K \right\},
\end{equation*}
where this space is characterized by the extended set of degrees of freedom $\widetilde{\Lambda}^\element$ described by the \emph{dof tuple} $(0, -1, \polOrder-2, \polOrder-2, \polOrder)$, using the notation from \citet{dedner2022robust}. We define these degrees of freedom as:
\begin{enumerate}[label=(D\arabic*),labelsep=8pt,labelindent=0.5\parindent,itemindent=0pt,leftmargin=*]
    \item The value of $v_h$ at each vertex of $\element$.\label{dofs:value}
    \item For $\polOrder > 1$, the moments of $v_h$ up to order $\polOrder-2$ on each edge $\edge \subset \partial \element$
        \[
            \frac{1}{\abs{\edge}} \int_\edge v_h p \ds \qquad \forall p \in \poly[\polOrder-2](\edge).
        \]\label{dofs:edge}
    \item For $\polOrder > 1$, the normal moments of $v_h$ up to order $\polOrder-2$ on each edge $\edge \subset \partial \element$
        \[
            \int_{\edge} \partial_n v_h p \ds \qquad \forall p \in \poly[\polOrder-2](\edge).
        \]\label{dofs:normals}
    \item For $\polOrder > 1$, the moments of $v_h$ up to order $\polOrder$ inside $\element$
        \[
            \frac{1}{\abs{\element}} \int_{\element} v_h p \dx \qquad \forall p \in \poly(\element).
        \]\label{dofs:volume}
\end{enumerate}

\begin{assumption}\label{assumption:monomial_linfty}
  We assume the polynomial basis functions $p_\element \in \poly(\element)$ and $p_\edge \in \poly(\edge)$ used in \ref{dofs:edge}--\ref{dofs:volume} to compute the values of the degrees of freedom are defined such that $\seminorm{p_\element}_{0,\infty,\element} \simeq 1$ and $\seminorm{p_\edge}_{0,\infty,\edge} \simeq 1$.
\end{assumption}
\begin{remark}
    We note that the standard choice of scaled monomials satisfies Assumption~\ref{assumption:monomial_linfty},  cf. \citet{beirao2017stability}.
\end{remark}

In order to define $\localVem$, we first introduce for $\element\in\mesh$ and $\edge\in\partial\element$ the interior value projection $\valueProj : \localEnlargedVemSpace\to \poly(\element)$, the edge value projection $\edgeProj : \localEnlargedVemSpace \to \poly(\edge)$, and the edge normal projection $\normalProj : \localEnlargedVemSpace \to \poly[\polOrder-1](\edge)$. These projections will be computable from the degrees of freedom $\Lambda^\element $ of a given $v_h \in \localVem$, cf. \eqref{eqn:localvem}.

\theoremstyle{definition}
\begin{assumption}\label{assumption:projections}
For $\element\in\mesh$, $e\in\partial\element$, and $ v_h \in \localEnlargedVemSpace$, we assume that the value projection $\valueProj$, the edge projection $\edgeProj$ and the normal edge projection $\normalProj$ are a linear combination of the dofs $\Lambda^\element(v_h)$
and satisfy the additional properties:
\begin{itemize}
    \item The value projection $\valueProj v_h\in \poly(\element) $ satisfies
    \[
    \int_\element \valueProj v_h p \dx = \int_K v_h p \dx \qquad \forall p \in \poly[\polOrder-4](\element),
    \]
    and $ \valueProj q = q $ for all $ q \in \poly(\element) $.
    \item For each edge, the edge projection $\edgeProj v_h \in \mathbb{P}_{\polOrder}(e) $ satisfies $\edgeProj v_h(\edge^\pm) = v_h(\edge^\pm)$, where $\edge^\pm\in\real^2$ are the two endpoints of the edge $\edge$,
    \[
    \int_\edge \edgeProj v_h p \ds = \int_\edge v_h p \ds \qquad \forall p \in \poly[\polOrder-2](\edge),
    \]
    and $ \edgeProj q = q|_e $ for all $ q \in \poly(\element) $.
    \item For each edge, the edge normal projection $\normalProj v_h \in \poly[\polOrder-1](\edge) $ satisfies
    \[
    \int_\edge \normalProj v_h p \ds = \int_\edge \frac{\partial v_h}{\partial \normal_\edge}p \ds \qquad \forall p \in \mathbb{P}_{\polOrder-2}(\edge),
    \]
    and $\normalProj q = \nicefrac{\partial q}{\partial\normal_\edge}|_\edge $ for all $ q \in \poly(\element) $.
\end{itemize}
\end{assumption}
There are multiple ways of defining the value, edge, and edge normal projections such that Assumption~\ref{assumption:projections} is satisfied. An example choice, based on constrained least squares problems, can be found in \citet{dedner2022robust, dedner2024framework}, where we refer the reader for more details.

Now, we are able to define the gradient $\gradProj$ and Hessian $\hessProj$ projections.

\begin{definition}
    The \emph{gradient projection} $ \gradProj : \localEnlargedVemSpace \to [\poly[\polOrder-1](\element)]^2 $ is defined as
    \[
        \int_\element \gradProj v_h \cdot \vect{p} \dx = - \int_\element \valueProj v_h \diver \vect{p} \dx + \sum_{\edge \subset \partial \element} \int_\edge \edgeProj v_h \vect{p}\cdot\normal_\edge \ds, \quad \forall \vect{p} \in [\poly[\polOrder-1](\element)]^2.
    \]
    The \emph{Hessian projection} $ \hessProj : \localEnlargedVemSpace \to [\poly[\polOrder-2](\element)]^{2 \times 2 }$ is defined as
    \begin{align*}
        \int_\element \hessProj v_h : p \dx &= - \int_\element \gradProj v_h \cdot (\diver p) \dx \\&\quad+ \sum_{\edge \subset \partial \element} \int_\edge \left( \normalProj v_h( \normal_\edge \otimes \normal_\edge) : p + \partial_\edge (\edgeProj v_h) (\tangential_\edge \otimes \normal_\edge) : p \right) \ds,
    \end{align*}
    for all $p \in [\poly[\polOrder-2](\element)]^{2 \times 2 }$, where $\partial_\edge$ denotes the derivative along the edge $\edge$.
\end{definition}
\begin{remark}
    For the numerical experiments conducted in Section~\ref{sec:numerics}, the definitions for the value, gradient, Hessian, edge, and normal projections in \citet{dedner2024framework} are used.
\end{remark}

\noindent For $\element\in\mesh$, we can now define the local virtual element space $\localVem$ as
\begin{equation}
  \localVem \coloneqq \left\{ v_h \in \localEnlargedVemSpace : \int_\element (v_h - \valueProj v_h) p = 0 \; \forall p \in \poly(\element) \setminus \poly[\polOrder-4](\element)\right\}.  \label{eqn:localvem}
\end{equation}
This space is characterized by the set of degrees of freedom $\Lambda_\element$ described by the dof tuple $(0, -1, \polOrder-2, \polOrder-2, \polOrder-4)$; cf. \citet{dedner2022robust}. Note that this set of degrees of freedom is identical to the set used in \citet{zhao2016nonconforming} for the plate bending problem. As shown in \citet{dedner2022robust} we have the following crucial property for the projection operators.
\begin{lemma} \label{lemma:proj_eq_l2proj}
     Assume that the value, edge, and edge normal projections satisfy Assumption~\ref{assumption:projections}. Then, for any $v_h\in\localVem$, it holds that
    \[
    \valueProj v_h = \LtwoProjEle[\polOrder] (v_h),\qquad
    \gradProj v_h = \LtwoProjEle[\polOrder-1] (\grad v_h),\quad\text{and}\quad
    \hessProj v_h = \LtwoProjEle[\polOrder-2] (\hess v_h);
    \]
    i.e., the value, gradient, and hessian projections are the $\Lp$-orthogonal projections of order $\polOrder$ of the value, order $\polOrder-1$ of the gradient, and order $\polOrder-2$ of the Hessian, respectively. Additionally, as $\poly(\element) \subset \localVem$, it holds that for all $p \in \poly(\element)$
    \[
        \valueProj p =  p,\qquad
        \gradProj p = \grad p,\quad\text{and}\quad
        \hessProj p = \hess p.
    \]
\end{lemma}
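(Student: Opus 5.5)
The plan is to prove the three identities in order, value, then gradient, then Hessian. Each step reduces the claim to integration by parts on $\element$, using the defining property of $\localVem$ and the moment conditions in Assumption~\ref{assumption:projections}. Since all three projections map into the appropriate polynomial spaces, it suffices in each case to test against an arbitrary polynomial of the target space.

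\emph{Value projection.} Let $v_h\in\localVem$ and $p\in\poly(\element)$. Split $p=p_1+p_2$ with $p_1\in\poly[\polOrder-4](\element)$ and $p_2$ in a complement. Assumption~\ref{assumption:projections} gives $\int_\element(v_h-\valueProj v_h)p_1\dx=0$. The definition \eqref{eqn:localvem} of $\localVem$ gives $\int_\element(v_h-\valueProj v_h)p_2\dx=0$. Hence $\valueProj v_h=\LtwoProjEle[\polOrder] v_h$.

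\emph{Gradient projection.} For $\vect{p}\in[\poly[\polOrder-1](\element)]^2$ we have $\diver\vect{p}\in\poly[\polOrder-2](\element)\subset\poly(\element)$, so by the previous step $\int_\element\valueProj v_h\diver\vect{p}\dx=\int_\element v_h\diver\vect{p}\dx$. On each edge, $\vect{p}\cdot\normal_\edge\in\poly[\polOrder-1](\edge)$, which is one degree too high for the moment condition in Assumption~\ref{assumption:projections}, so more care is needed here. Since $v_h|_\edge\in\poly(\edge)$ for $v_h\in\localEnlargedVemSpace$, I would argue as follows. The function $\edgeProj v_h\in\poly(\edge)$ matches $v_h$ at both endpoints and in all moments against $\poly[\polOrder-2](\edge)$. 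These $\polOrder+1$ conditions are unisolvent for $\poly(\edge)$, so $\edgeProj v_h=v_h|_\edge$. Then $\int_\edge\edgeProj v_h\,\vect{p}\cdot\normal_\edge\ds=\int_\edge v_h\,\vect{p}\cdot\normal_\edge\ds$, and integrating by parts shows $\int_\element\gradProj v_h\cdot\vect{p}\dx=\int_\element\grad v_h\cdot\vect{p}\dx$.

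\emph{Hessian projection.} For $p\in[\poly[\polOrder-2](\element)]^{2\times2}$, $\diver p\in[\poly[\polOrder-3](\element)]^2$, so the gradient step replaces $\gradProj v_h$ by $\grad v_h$ in the volume term. Integrating by parts, $-\int_\element\grad v_h\cdot\diver p\dx+\int_{\partial\element}\grad v_h\cdot p\normal\ds=\int_\element\hess v_h:p\dx$. On each edge I decompose $\grad v_h=\partial_{\normal_\edge}v_h\,\normal_\edge+\partial_\edge v_h\,\tangential_\edge$. The tangential term is handled by $\edgeProj v_h=v_h|_\edge$ from the gradient step, which gives $\partial_\edge\edgeProj v_h=\partial_\edge v_h$. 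The normal term pairs $\partial_{\normal_\edge}v_h$ against $(\normal_\edge\otimes\normal_\edge):p\in\poly[\polOrder-2](\edge)$, which is exactly the range where $\normalProj$ reproduces the normal moments. It remains to check that the tensor contractions $\normal_\edge\otimes\normal_\edge$ and $\tangential_\edge\otimes\normal_\edge$ in the definition match the boundary term $\grad v_h\cdot p\normal$; this is routine index bookkeeping.

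\emph{Polynomial reproduction.} For $p\in\poly(\element)$, the definition \eqref{eqn:localvem} holds trivially once we check $p\in\localEnlargedVemSpace$. This is immediate: $\biharmonic p\in\poly$, $p|_\edge\in\poly(\edge)$, and $\laplace p|_\edge\in\poly[\polOrder-2](\edge)$. The identities then follow from the $\Lp$-projection characterization, since $\grad p$ and $\hess p$ already lie in the respective target spaces.

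The main obstacle is the edge term in the gradient step, because the edge moments in Assumption~\ref{assumption:projections} only go up to degree $\polOrder-2$ while the test functions there have degree $\polOrder-1$. It is resolved by the observation above that $\edgeProj$ is the identity on traces of $\localEnlargedVemSpace$.
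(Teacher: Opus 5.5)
Your proof is correct and complete. The paper gives no argument of its own here and defers to \citet{dedner2022robust}; your hierarchical route (Assumption~\ref{assumption:projections} together with the enhancement condition for $\valueProj$, then element-wise integration by parts for $\gradProj$ and $\hessProj$) is the natural proof of this result, and your unisolvence observation giving $\edgeProj v_h = v_h|_\edge$ for $v_h\in\localEnlargedVemSpace$ is exactly what handles both the degree-$(\polOrder-1)$ edge term in the gradient step and the tangential term in the Hessian step.
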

\begin{corollary}\label{corol:proj_eq_l2proj}
    For any $v_h \in \localVem$ the value, gradient, and Hessian projections are continuous with respect to the $\Lp[2]$-norm; i.e.,
    \begin{align*}
        \norm{\valueProj v_h}_{0,\element} &= \norm{\LtwoProjEle(v_h)}_{0,\element} \leq \norm{v_h}_{0,\element}, \\
        \norm{\gradProj v_h}_{0,\element} &= \norm{\LtwoProjEle[\polOrder-1](\grad v_h)}_{0,\element} \leq \norm{\grad v_h}_{0,\element} \leq \norm{v_h}_{1,\element}, \\
        \norm{\hessProj v_h}_{0,\element} &= \norm{\LtwoProjEle[\polOrder-2](\hess v_h)}_{0,\element} \leq \norm{\hess v_h}_{0,\element} \leq \norm{v_h}_{2,\element}.
    \end{align*}
\end{corollary}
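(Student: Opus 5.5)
The corollary follows from Lemma~\ref{lemma:proj_eq_l2proj} together with the fact that an orthogonal $\Lp$-projection has operator norm at most one. The plan is to verify the three chains of (in)equalities one at a time, each by the same two-step argument.

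First, for $v_h\in\localVem$, Lemma~\ref{lemma:proj_eq_l2proj} gives the identities $\valueProj v_h=\LtwoProjEle(v_h)$, $\gradProj v_h=\LtwoProjEle[\polOrder-1](\grad v_h)$ and $\hessProj v_h=\LtwoProjEle[\polOrder-2](\hess v_h)$. Taking $\Lp(\element)$-norms yields the three equalities in the statement.

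Second, I bound each projection using orthogonality. For any $w\in\Lp(\element)$ and any $k\ge 0$, the residual $w-\LtwoProjEle[k]w$ is orthogonal to $\poly[k](\element)$, and $\LtwoProjEle[k]w$ lies in that space. Pythagoras then gives
\[
\norm{w}_{0,\element}^2=\norm{\LtwoProjEle[k]w}_{0,\element}^2+\norm{w-\LtwoProjEle[k]w}_{0,\element}^2 ,
\]
so $\norm{\LtwoProjEle[k]w}_{0,\element}\le\norm{w}_{0,\element}$.

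For vector- and tensor-valued arguments, the projection acts componentwise. The norms are the product (Frobenius) norms, so the same inequality holds after summing the componentwise Pythagoras identities. I apply this with $w=v_h$, $w=\grad v_h$ and $w=\hess v_h$. Each of these lies in $\Lp$ because $v_h\in\localEnlargedVemSpace\subset\Hk{2}(\element)$.

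The final inequalities, $\norm{\grad v_h}_{0,\element}\le\norm{v_h}_{1,\element}$ and $\norm{\hess v_h}_{0,\element}\le\norm{v_h}_{2,\element}$, are immediate. The full Sobolev norms squared are sums of the squared seminorms $\seminorm{\emptyarg}_{j,\element}$, and these include the seminorm in question.

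There is no real obstacle here. The only point needing care is the vector/tensor case, where one must note that the product norms make componentwise projection a contraction; the essential content is already in Lemma~\ref{lemma:proj_eq_l2proj}.
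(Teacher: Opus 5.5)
Your proof is correct and follows the paper's argument, which simply invokes Lemma~\ref{lemma:proj_eq_l2proj} together with the fact that orthogonal $L^2$-projections are contractions; you have written out the Pythagoras and componentwise details the paper leaves implicit. One point on conventions: the final step $\norm{\hess v_h}_{0,\element}\le\norm{v_h}_{2,\element}$ needs $\seminorm{v_h}_{2,\element}=\norm{\hess v_h}_{0,\element}$ in the Frobenius sense (mixed derivative counted twice), which the paper assumes without saying so, whereas with the multi-index convention for $\seminorm{\cdot}_{2,\element}$ the bound holds only up to a factor $\sqrt{2}$.
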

\begin{proof}
    This follows directly from Lemma~\ref{lemma:proj_eq_l2proj} and the basic properties of $\Lp$-projections.
\end{proof}
We note the following results from \citet{brenner2017some} and \citet{huang2021medius}, respectively, which will be crucial for the proofs in Section~\ref{sec:nonlinear_error}.
\begin{lemma}[{\citet[Eqn. 2.8]{brenner2017some}}]\label{lemma:linfty_bound}
    Under Assumption~\ref{assumption:mesh}\ref{assumption:mesh:star_shaped}, for any $u \in \Hk{2}(\element)$, the following inequality holds:
    \begin{equation*}
        \|u\|_{0,\infty,\element} \leq C\left( h^{-1}_\element\|u\|_{0,\element} +  |u|_{1,\element} + h_K|u|_{2,\element}\right) \qquad \forall \element \in \mathcal{T}_h.
    \end{equation*}
\end{lemma}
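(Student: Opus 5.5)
The plan is a scaling argument: prove the inequality on a reference configuration with $h=1$ and transport it to $\element$ by dilation. Fix $\element\in\mesh$ and define $\hat{\element}\coloneqq h_\element^{-1}(\element-\x_0)$, where $\x_0$ is the centre of the ball from Assumption~\ref{assumption:mesh}\ref{assumption:mesh:star_shaped}. Then $\hat{\element}$ has diameter one, contains $\ball{0}{\varrho}$, and is star-shaped with respect to every point of that ball. For $u\in\Hk{2}(\element)$ set $\hat u(\hat{\x})\coloneqq u(\x_0+h_\element\hat{\x})$. The standard scalings are
\[
\norm{\hat u}_{0,\infty,\hat{\element}}=\norm{u}_{0,\infty,\element},\qquad
\seminorm{\hat u}_{j,\hat{\element}}=h_\element^{j-1}\seminorm{u}_{j,\element},\quad j=0,1,2,
\]
where the factor $h_\element^{j-1}$ combines the $h_\element^{j}$ from the chain rule with the $h_\element^{-1}$ from the two-dimensional change of measure. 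It therefore suffices to show $\norm{\hat u}_{0,\infty,\hat{\element}}\le C\norm{\hat u}_{2,\hat{\element}}$ with $C$ depending only on $\varrho$. Scaling back gives exactly $h_\element^{-1}\norm{u}_{0,\element}+\seminorm{u}_{1,\element}+h_\element\seminorm{u}_{2,\element}$.

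The substance is the reference estimate, namely the Sobolev embedding $\Hk{2}(\hat{\element})\hookrightarrow\Cp(\overline{\hat{\element}})$ in two dimensions, with a constant uniform over the whole family of admissible shapes. A bare compactness argument is not enough here, because the family is not a finite set of domains. I would therefore build the constant explicitly using the Sobolev representation formula (Bramble--Hilbert / averaged Taylor polynomial, as in \citet{brenner2008mathematical}, Ch.~4) for domains star-shaped with respect to a ball.

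Concretely, let $Q\hat u\in\poly[1]$ be the averaged Taylor polynomial of degree one over $B=\ball{0}{\varrho}$.
\begin{itemize}
\item For the polynomial part, $\norm{Q\hat u}_{0,\infty,\hat{\element}}\le C(\varrho)\norm{\hat u}_{1,B}$. This holds because the coefficients are averages of $\hat u$ and $\grad\hat u$ against a smooth cutoff supported in $B$, and $\hat{\element}\subset\ball{0}{1}$.
\item For the remainder, $\hat u-Q\hat u$ is given by an integral operator acting on $\hess\hat u$, with kernel bounded by $C(\varrho)\abs{\hat{\x}-\vect{y}}^{-(d-2)}$. In $d=2$ this is a logarithmic kernel, which is square-integrable on bounded sets, so Cauchy--Schwarz gives $\norm{\hat u-Q\hat u}_{0,\infty,\hat{\element}}\le C(\varrho)\seminorm{\hat u}_{2,\hat{\element}}$.
\end{itemize}
The constants depend only on the chunkiness parameter, which is controlled by $\varrho$. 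This is where Assumption~\ref{assumption:mesh}\ref{assumption:mesh:star_shaped} is used. Adding the two bounds gives the reference inequality.

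The main obstacle is the uniformity of the embedding constant over all $\hat{\element}$. I would handle it by citing the Brenner--Scott representation and its chunkiness-dependent bounds rather than re-deriving the kernel estimates. One also needs density of smooth functions to apply the pointwise formula, and this holds since star-shaped Lipschitz domains admit it. After that, the proof reduces to bookkeeping the powers of $h_\element$, which produce exactly the stated weights $h_\element^{-1}$, $1$, $h_\element$.
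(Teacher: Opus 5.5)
Your proposal is correct and follows essentially the same route as the paper, which gives no proof of its own and simply cites the inequality; the cited bound is the standard consequence of the Sobolev inequality on domains star-shaped with respect to a ball (Brenner--Scott, Lemma~4.3.4, proved via the averaged Taylor polynomial with chunkiness-dependent constants) combined with scaling to unit diameter, which is what you do. One harmless slip: with $d=2$ your remainder-kernel bound $\lvert\hat{\x}-\vect{y}\rvert^{-(d-2)}$ is a bounded constant, not a logarithmic kernel, so the Cauchy--Schwarz step holds all the more easily.
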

\begin{lemma}[{\citet[Lemma 3.2]{huang2021medius}}]\label{l3.9}
    For any $u \in \Hk{2}(\element)$, there exists $\delta > 0$ and constant $C(\delta) > 0$ dependent on $\delta$ such that 
    \begin{equation*}
        |u|_{1,\element} \leq \delta h_\element|u|_{2,\element} + C(\delta)h_K^{-1}\|u\|_{0,\element}.
    \end{equation*}
\end{lemma}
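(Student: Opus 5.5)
The statement is Lemma~\ref{l3.9}: for $u\in\Hk{2}(\element)$ and any $\delta>0$ there is $C(\delta)$ with $\seminorm{u}_{1,\element}\le \delta h_\element\seminorm{u}_{2,\element}+C(\delta)h_\element^{-1}\norm{u}_{0,\element}$. The plan is to reduce to a reference configuration by scaling, and then use an interpolation inequality between $\Lp[2]$, $\Hk{1}$ and $\Hk{2}$ on star-shaped domains.

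\emph{Scaling step.} Let $\hat x=(x-x_\element)/h_\element$ and $\hat u(\hat x)=u(x)$, where $x_\element$ is the centre of the ball in Assumption~\ref{assumption:mesh}\ref{assumption:mesh:star_shaped}. The rescaled element $\hat\element$ has diameter $1$ and is star-shaped with respect to a ball of radius $\varrho$. In two dimensions one has $\seminorm{u}_{1,\element}=\seminorm{\hat u}_{1,\hat\element}$, $\seminorm{u}_{2,\element}=h_\element^{-1}\seminorm{\hat u}_{2,\hat\element}$ and $\norm{u}_{0,\element}=h_\element\norm{\hat u}_{0,\hat\element}$. The claimed inequality therefore becomes
\[
\seminorm{\hat u}_{1,\hat\element}\le \delta\seminorm{\hat u}_{2,\hat\element}+C(\delta)\norm{\hat u}_{0,\hat\element}.
\]
This must hold with $C(\delta)$ uniform over all domains of diameter $1$ that are star-shaped with respect to a ball of radius $\varrho$.

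\emph{Interpolation step.} I would prove $\seminorm{\hat u}_{1}^2\le C(\norm{\hat u}_{0}\norm{\hat u}_{2}+\norm{\hat u}_0^2)$. Young's inequality then gives
\[
\seminorm{\hat u}_1\le \delta\norm{\hat u}_2+C\delta^{-1}\norm{\hat u}_0 .
\]
This still contains the full norm $\norm{\hat u}_2$, which includes $\seminorm{\hat u}_1$ and $\norm{\hat u}_0$. Taking $\delta\le 1/2$, the $\delta\seminorm{\hat u}_1$ term is absorbed into the left-hand side and the $\delta\norm{\hat u}_0$ term into $C(\delta)\norm{\hat u}_0$.

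To get the interpolation inequality uniformly, I would use a Stein- or Calderón-type extension operator $E:\Hk{k}(\hat\element)\to\Hk{k}(\real^2)$ for $k=0,1,2$. Its norm depends only on the Lipschitz character of $\hat\element$, which is controlled by $\varrho$, since star-shapedness with respect to a ball yields a uniform cone condition. On $\real^2$, Plancherel and Cauchy--Schwarz give
\[
\seminorm{Eu}_1^2=\int\abs{\xi}^2\abs{\widehat{Eu}}^2\le\norm{Eu}_0\seminorm{Eu}_2 .
\]
Restricting back to $\hat\element$ yields the bound with constants depending only on $\varrho$.

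The main obstacle is this uniformity. Extension constants must be shown to depend only on the chunkiness parameter $\varrho$, not on the particular polygon. If I want to avoid extensions, I would instead argue by compactness and contradiction on the reference element. Supposing the estimate fails for fixed $\delta$ gives a sequence with $\seminorm{\hat u_n}_1=1$, $\seminorm{\hat u_n}_2<1/\delta$ and $\norm{\hat u_n}_0\to0$. Rellich compactness of $\Hk{2}\hookrightarrow\Hk{1}$ then gives a limit with $\seminorm{\hat u}_1=1$ and $\hat u=0$, a contradiction. Uniformity over shapes would then follow by a further compactness argument, using Hausdorff compactness of the class of admissible domains, or simply by citing \citet{huang2021medius}.
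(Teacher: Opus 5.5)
The paper gives no argument of its own here; it simply cites \citet[Lemma~3.2]{huang2021medius}. Your scaling computation is right. On a \emph{fixed} reference domain both of your arguments are correct: the Gagliardo--Nirenberg bound via a single extension operator bounded on $L^2$ and $H^2$ plus Plancherel, followed by Young's inequality and absorption of $\delta|\hat u|_{1}$; and the Rellich contradiction argument. They give the intended ``for every $\delta>0$'' version, with $C(\delta)\sim\delta^{-1}$ from the first, which is stronger than the ``there exists $\delta$'' wording in the paper.

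What you do not establish is that $C(\delta)$ is uniform over all admissible elements. That uniformity is the only nontrivial content of the lemma as it is used later, e.g.\ in Corollary~\ref{corol:h2full}.

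\begin{itemize}
\item Your extension route rests on the claim that the Stein extension norm on $\hat K$ depends only on $\varrho$. This is true: a domain of diameter $1$ star-shaped with respect to a ball of radius $\varrho$ is Lipschitz, with local graph constants, patch sizes and overlap number controlled by $\varrho$, and Stein's constant depends only on these. But it must be proved or cited, not merely asserted.
\item Your compactness-over-shapes route does not work as sketched. Hausdorff convergence of $\hat K_n$ gives no way to pass an $H^2$-bounded sequence $u_n\in H^2(\hat K_n)$ to a limit on a common domain, since the $\hat K_n$ only share the ball. Without uniform extension (circular here) or uniformly $W^{2,\infty}$ diffeomorphisms between nearby shapes (which you do not construct), nothing stops $|u_n|_{1}$ from concentrating near $\partial\hat K_n$.
\end{itemize}

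A simple fan sub-triangulation closes the gap with no shape-dependent analysis.

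\begin{itemize}
\item By Assumption~\ref{assumption:mesh}\ref{assumption:mesh:star_shaped}, the ball $B(x_K,\varrho h_K)$ lies in the kernel of $K$. The kernel of a polygon is the intersection of the inner half-planes of its edges, so the ball lies on the interior side of every edge line.
\item Hence each $T_e=\mathrm{conv}(x_K,e)$ has height at least $\varrho h_K$. It has base $h_e\ge\varrho h_K$ by Assumption~\ref{assumption:mesh}\ref{assumption:mesh:shape_regular}, and diameter at most $h_K$.
\item So the $T_e$ partition $K$ into triangles that are shape regular with a constant depending only on $\varrho$, and $\varrho h_K\le h_{T_e}\le h_K$.
\item Map each $T_e$ affinely onto one fixed reference triangle, where your argument gives $|\hat u|_1\le\delta|\hat u|_2+C(\delta)\|\hat u\|_0$ with a single constant. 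Scale back, square, and sum over $e$.
\end{itemize}

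Since $|\cdot|_{1}^2$, $|\cdot|_{2}^2$ and $\|\cdot\|_{0}^2$ are additive over the partition, this yields the estimate on $K$ with a constant depending only on $\varrho$. You do not even need the bound on the number of edges.
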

\begin{remark}
    The inequality mentioned in Lemma~\ref{l3.9} can be easily extended to the full $\Hk{1}(\element)$ and $\Hk{2}(\element)$ norms using standard scaling arguments on a reference element.
\end{remark}
\begin{corollary}\label{corol:h2full}
    For all $v \in \localVem$, it holds that
    \[
        \|v\|_{0,\infty,\element} \leq Ch^{-1}_\element\|v\|_{0,2,\element}. 
    \]
\end{corollary}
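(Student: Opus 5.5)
The plan is to combine Lemma~\ref{lemma:linfty_bound} with an inverse-type control of the $\Hk{1}$ and $\Hk{2}$ seminorms of $v$ by $h_\element^{-1}\norm{v}_{0,\element}$ and $h_\element^{-2}\norm{v}_{0,\element}$. Starting from Lemma~\ref{lemma:linfty_bound},
\[
\norm{v}_{0,\infty,\element} \leq C\left(h_\element^{-1}\norm{v}_{0,\element} + \seminorm{v}_{1,\element} + h_\element\seminorm{v}_{2,\element}\right),
\]
it suffices to show $\seminorm{v}_{1,\element}\leq Ch_\element^{-1}\norm{v}_{0,\element}$ and $\seminorm{v}_{2,\element}\leq Ch_\element^{-2}\norm{v}_{0,\element}$ for all $v\in\localVem$.

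For the first-order term I would use Lemma~\ref{l3.9} with a fixed small $\delta$: $\seminorm{v}_{1,\element}\leq \delta h_\element\seminorm{v}_{2,\element}+C(\delta)h_\element^{-1}\norm{v}_{0,\element}$. Substituting this into the bound above, everything reduces to the single inverse estimate
\[
\seminorm{v}_{2,\element}\leq Ch_\element^{-2}\norm{v}_{0,\element}\qquad\forall v\in\localVem .
\]

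This inverse estimate is the main obstacle. I would prove it by a scaling argument. Map $\element$ to $\hat\element$ of unit diameter via $\hat\x=(\x-\x_\element)/h_\element$. The space $\localVem$ is defined through polynomial constraints and the projections of Assumption~\ref{assumption:projections}, so it is invariant under this affine dilation. The pulled-back space $\hat V$ is therefore finite-dimensional, with dimension bounded uniformly thanks to Assumption~\ref{assumption:mesh}\ref{assumption:mesh:edges}. On $\hat V$, $\norm{\emptyarg}_{0,\hat\element}$ is a norm, so equivalence of norms gives $\seminorm{\hat v}_{2,\hat\element}\leq \hat C\norm{\hat v}_{0,\hat\element}$.

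The delicate point is making $\hat C$ uniform over the family of rescaled elements. I would handle this as follows:
\begin{itemize}
\item Assumption~\ref{assumption:mesh}\ref{assumption:mesh:shape_regular}--\ref{assumption:mesh:star_shaped} make the admissible reference shapes a compact set.
\item Each $\hat v$ solves a biharmonic problem with polynomial data, so the constant depends continuously on the geometry.
\item A compactness argument, or alternatively the stability estimates for the dofs of \citet{beirao2017stability} under Assumption~\ref{assumption:monomial_linfty}, then yields a uniform $\hat C$.
\end{itemize}
In the stability route, one bounds $\seminorm{\hat v}_{2,\hat\element}$ by the Euclidean norm of the dof vector, and that norm by $\norm{\hat v}_{0,\hat\element}$. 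The second step would use trace and inverse inequalities for the edge moments and normal moments, which I expect is the hardest technical part.

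Scaling back, $\seminorm{v}_{2,\element}=h_\element^{-1}\seminorm{\hat v}_{2,\hat\element}$ and $\norm{v}_{0,\element}=h_\element\norm{\hat v}_{0,\hat\element}$ in two dimensions, which gives $\seminorm{v}_{2,\element}\leq Ch_\element^{-2}\norm{v}_{0,\element}$. Combining this with the two earlier steps yields $\norm{v}_{0,\infty,\element}\leq Ch_\element^{-1}\norm{v}_{0,\element}$.
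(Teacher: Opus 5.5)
Your reduction is exactly the paper's. Inserting Lemma~\ref{l3.9} into Lemma~\ref{lemma:linfty_bound} reduces everything to the inverse estimate $|v|_{2,K}\le Ch_K^{-2}\|v\|_{0,K}$ on $\localVem$, and from there your scaling bookkeeping gives the claim. The only difference is how that inverse estimate is obtained. The paper does not prove it; it quotes it from \citet[Theorem 4.1]{huang2024c}. You try to derive it, and that derivation does not hold up as written.

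\textbf{Compactness route.} Norm equivalence on a single rescaled element is fine. However, the sentence ``each $\hat v$ solves a biharmonic problem with polynomial data, so the constant depends continuously on the geometry'' is the entire difficulty, and you only assert it. The constant $\hat C(\hat K)=\sup_{0\neq\hat v}|\hat v|_{2,\hat K}/\|\hat v\|_{0,\hat K}$ is a supremum over a space that itself changes with $\hat K$. Non-affine perturbations of the polygon do not map one local space into another, because they do not preserve the defining conditions:
\begin{itemize}
\item $\Delta^2 v\in\mathbb{P}_\ell(K)$,
\item $v|_e\in\mathbb{P}_\ell(e)$ and $\Delta v|_e\in\mathbb{P}_{\ell-2}(e)$,
\item the moment constraint in \eqref{eqn:localvem}.
\end{itemize}
So even upper semicontinuity of $\hat C$ would need a domain-perturbation stability theory for the biharmonic problem on polygons with varying angles. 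That theory would also have to cover limits in which vertices become flat.

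\textbf{Degrees-of-freedom route.} The second step is circular. Vertex values, edge moments and especially normal moments are boundary quantities of a non-polynomial function. Trace inequalities bound them by $H^1$- or $H^2$-norms of $\hat v$, not by $\|\hat v\|_{0,\hat K}$. Lemma~\ref{l3.9} does not help either, since it still carries a $|\hat v|_{2,\hat K}$ term. Converting back to $\|\hat v\|_{0,\hat K}$ therefore requires exactly the inverse inequality on the virtual element space that you are trying to prove.

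You should either cite the known inverse estimate, as the paper does, or give a genuine proof of it that exploits the PDE structure of the local space. With that ingredient in place, the rest of your argument is complete.
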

\begin{proof}
    We get the estimate by substituting the inverse inequality {\citep[Theorem 4.1]{huang2024c}}
    \begin{align*}
        |u|_{2,\element} \leq Ch^{-2}_\element\|u\|_{0,2,\element}
    \end{align*}
    for any $u \in \localVem$, where $C$ is a constant independent of $h$, and Lemma~\ref{l3.9} into Lemma \ref{lemma:linfty_bound}.
\end{proof}

\subsection{Global VEM space and projection operators}
Using the local spaces we can now define the global virtual element spaces
\begin{align*}
    \vemSpace{g} &\coloneqq \left\{ v_h \in \Hnc{g}(\mesh) :
    v_h|_\element \in \localVem,\;\forall \element \in \mesh \right\},\\
    \vemSpace{0} &\coloneqq \left\{ v_h \in \Hnc{0}(\mesh) :
    v_h|_\element \in \localVem,\;\forall \element \in \mesh \right\}.
\end{align*}
The global dofs can be defined in the same way as for the local space. Note here that $\vemSpace{g},\vemSpace{0} \not\subset \Hk{2}(\Omega)$ but $\vemSpace{g}$ and $\vemSpace{0}$ are $\Cp$-conforming. We note that $\poly(\mesh) \subset \vemSpace{g} \subset \Hnc{g}(\mesh)$ and $\poly(\mesh) \subset \vemSpace{0} \subset \Hnc{0}(\mesh)$.

\begin{definition}
    We denote by $\valueGlobal$, $\gradGlobal$, and $\hessGlobal$ the global value, gradient, and Hessian projections, respectively, where
\[
(\valueGlobal v_h)|_\element := \valueProj (v_h|_\element),\qquad
(\gradGlobal v_h)|_\element := \gradProj (v_h|_\element),\quad\text{and}\quad
(\hessGlobal v_h)|_\element := \hessProj (v_h|_\element).
\]
for all $v_h \in \vemSpace{0}\cup\vemSpace{g}$.
\end{definition}
From \citet{zhao2016nonconforming,dedner2022robust}, we have the following result.
\begin{lemma} \label{lemma:interpolation}
    Under Assumptions~\ref{assumption:mesh}\ref{assumption:mesh:shape_regular}\&\ref{assumption:mesh:star_shaped}, for $\polOrder \geq 0$ and for every $w \in \Hk{m}(\element)$ with $1 \leq m \leq \polOrder + 1$, there exists a function $w_I \in \vemSpace{i}$, where $i \in \{g,0\}$, such that
    \[
        \norm{w -  w_I}_{s,K} \leq C h_\element^{m-s} \seminorm{w}_{m,K}, \qquad \text{for } s = 0,\dots,2.
    \]
\end{lemma}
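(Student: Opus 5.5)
The plan is to construct $w_I$ as the degree-of-freedom interpolant of $w$ and to prove the estimate by a Bramble--Hilbert type argument on each element. Since the dofs \ref{dofs:value}--\ref{dofs:volume} restricted to $\Lambda_\element$ (vertex values, edge moments of order $\polOrder-2$, normal moments of order $\polOrder-2$, interior moments of order $\polOrder-4$) are well defined for $w\in\Hk{2}(\element)$, we first take $m\geq 2$. In that case we define $w_I|_\element\in\localVem$ as the unique function whose dofs coincide with those of $w$. The case $m=1$ would then be handled by first replacing $w$ with a quasi-interpolant or averaged Taylor polynomial, which reduces it to the smooth case.

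Global membership comes first. The vertex values and edge moments are single-valued on shared edges, and on each edge $w_I|_\edge\in\poly(\edge)$ is determined by its endpoint values and its moments up to order $\polOrder-2$. Hence $w_I$ is continuous across elements, giving $C^0$-conformity. The normal moments of order $\polOrder-2$ agree on both sides, because $w\in\Hk{2}$ has no normal-derivative jump, so the weak continuity constraint defining $\Hnc{g}$ or $\Hnc{0}$ holds. On boundary edges $w_I$ interpolates $w$ in the same way; for $i=0$ it vanishes when $w$ does, and for $i=g$ we understand that the boundary data is interpolated, as in \citet{zhao2016nonconforming}.

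For the local estimate, the interpolation operator $I_\element$ reproduces $\poly(\element)\subset\localVem$, since the dofs of $q\in\poly(\element)$ determine $q$ itself. Write $w-w_I=(w-q)-I_\element(w-q)$, where $q$ is the averaged Taylor polynomial of degree $\polOrder$ on the star-shaped ball given by Assumption~\ref{assumption:mesh}\ref{assumption:mesh:star_shaped}. Bramble--Hilbert then gives $\abs{w-q}_{s,\element}\le Ch_\element^{m-s}\abs{w}_{m,\element}$. It remains to prove the stability bound
\[
\norm{I_\element v}_{s,\element}\le C\sum_{t=0}^{2}h_\element^{t-s}\abs{v}_{t,\element}
\]
for $v\in\Hk{2}(\element)$. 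We would obtain this in two steps. First, bound each dof with scaled trace inequalities and Assumption~\ref{assumption:monomial_linfty}; the vertex values use Lemma~\ref{lemma:linfty_bound}. Second, use a norm equivalence on $\localVem$: the function is bounded in $\Hk{s}$ by $h_\element^{1-s}$ times the Euclidean norm of the scaled dof vector.

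The main obstacle is this last step, namely the uniform equivalence between $\norm{v_h}_{0,\element}$ and the scaled dof vector on the virtual space for general polygons. We would try to prove it by scaling $\element$ to unit diameter and then using a compactness argument over the family of star-shaped polygons with a bounded number of edges, as controlled by Assumption~\ref{assumption:mesh}\ref{assumption:mesh:shape_regular}--\ref{assumption:mesh:star_shaped}. Failing that, we would rely on the stability results of \citet{beirao2017stability} and \citet{huang2024c}, using the inverse inequality already used in Corollary~\ref{corol:h2full} to pass from $\Lp[2]$ to $\Hk{2}$ control. With stability in hand, combining it with the Bramble--Hilbert bound yields $\norm{w-w_I}_{s,\element}\le Ch_\element^{m-s}\abs{w}_{m,\element}$ for $s=0,1,2$.
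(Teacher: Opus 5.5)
Your skeleton is the standard one, and for $m\ge 2$ those steps are fine: the dof interpolant $I_\element$, its invariance on $\poly(\element)\subset\localVem$, an averaged Taylor polynomial on the star-shaped ball, and bounds on individual dofs via scaled trace inequalities and Lemma~\ref{lemma:linfty_bound}.

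\textbf{The missing step.} On general polygons, the whole difficulty of the lemma sits in the step you leave open. This is the uniform bound $\norm{v_h}_{0,\element}\le Ch_\element\bigl(\sum_{\lambda\in\Lambda^\element}\lambda(v_h)^2\bigr)^{1/2}$ for $v_h\in\localVem$. Without it the Bramble--Hilbert step yields nothing, and neither of your routes supplies it.

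A compactness argument is far from routine here. $\localVem$ is defined through a fourth-order problem ($\biharmonic v_h\in\poly(\element)$, with edge conditions on $v_h$ and $\laplace v_h$) together with the enhancement constraint involving $\valueProj$. You would have to prove that the inverse of the dof map depends continuously on the polygon. Comparing virtual functions on nearby polygons also needs deformations that preserve $\Hk{2}$, and piecewise-affine maps do not.

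The fallback does not work either. \citet{beirao2017stability} concerns the $\Hk{1}$-conforming virtual space, not this $\Hk{2}$-type one. The inverse inequality of \citet{huang2024c} only converts an $\Lp$ bound into an $\Hk{2}$ bound, so it presupposes exactly the estimate you are missing.

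The paper itself gives no proof and simply imports the estimate from \citet{zhao2016nonconforming,dedner2022robust}. You should either do the same, citing the interpolation estimate or a dof--norm equivalence for this $\Hk{2}$-type space, or actually prove the dof stability.

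\textbf{The case $m=1$.} Your treatment of this case is not viable.
\begin{itemize}
\item For $(m,s)=(1,2)$ the inequality is false as written. Since $w_I|_\element\in\Hk{2}(\element)$, we have $\norm{w-w_I}_{2,\element}=\infty$ for $w\in\Hk{1}(\element)\setminus\Hk{2}(\element)$, while the right-hand side is finite.
\item For $m=1$ and $s\le 1$ the dof interpolant is undefined. Replacing $w$ elementwise by averaged Taylor polynomials gives a function that is discontinuous across edges, so its elementwise dof interpolant does not lie in $\vemSpace{i}$. You would need averaging of shared dofs or a global smoothing, and the bound would then involve a patch seminorm rather than $\seminorm{w}_{1,\element}$.
\end{itemize}
The paper only ever applies the lemma with $m\ge 3$. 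So restrict to $2\le m\le\polOrder+1$ (hence $s\le m$) and take $w\in\Hk{m}(\Omega)$ globally, which your conformity argument already tacitly assumes.
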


\begin{lemma}
    There exists a constant $C>0$ independent of $h$ such that for any $w \in \vemSpace{0}$, it holds that
    \begin{equation*}\label{inverse}
        \norm{w}_{0,2} \leq C \seminorm{w}_{1,h}.
    \end{equation*}
\end{lemma}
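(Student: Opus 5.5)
The inequality is a discrete Poincaré--Friedrichs inequality for the broken $\Hk{1}$ seminorm. The key structural fact is that $\vemSpace{0}$ is $\Cp$-conforming with zero trace: every $w\in\vemSpace{0}$ is continuous across interior edges, since $w|_\edge\in\poly(\edge)$ is determined on each edge by the shared vertex values \ref{dofs:value} and edge moments \ref{dofs:edge}. It also vanishes on $\partial\Omega$, since $\vemSpace{0}\subset\Hnc{0}(\mesh)\subset\Hk{1}_0(\Omega)$ by definition. Hence $w\in\Hk{1}_0(\Omega)$, and its weak gradient coincides with the elementwise gradient, because no jump terms arise when we integrate by parts element by element. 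Therefore
\[
\seminorm{w}_{1,h}^2=\sum_{\element\in\mesh}\seminorm{w}_{1,\element}^2=\seminorm{w}_{1,\Omega}^2 .
\]

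The plan is as follows.

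First, I verify $w\in\Hk{1}_0(\Omega)$. For any $\vect{\phi}\in[\Cp[\infty]_c(\Omega)]^2$, integrate by parts on each $\element$:
\[
\int_\Omega w\,\diver\vect{\phi}\dx=-\sum_{\element\in\mesh}\int_\element\grad w\cdot\vect{\phi}\dx+\sum_{\edge\in\edgesI}\int_\edge\jmp{w}\,\vect{\phi}\cdot\normal_\edge\ds .
\]
The edge terms vanish because $\jmp{w}=0$ on interior edges. So the piecewise gradient is the weak gradient, and the boundary trace is zero by definition of $\Hnc{0}(\mesh)$. This step is immediate from the membership $\Hnc{0}(\mesh)\subset\Hk{1}_0(\Omega)$ already built into the definition; one need only note that $\Hk{2}(\mesh)\cap\Hk{1}_0(\Omega)$ means globally $\Hk{1}$.

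Second, I apply the classical Poincaré--Friedrichs inequality on the bounded domain $\Omega$:
\[
\norm{w}_{0,\Omega}\le C_\Omega\seminorm{w}_{1,\Omega}\qquad\text{for all } w\in\Hk{1}_0(\Omega),
\]
with $C_\Omega$ depending only on $\diam\Omega$. Combined with the identity above, this gives $\norm{w}_{0,2}\le C\seminorm{w}_{1,h}$ with $C$ independent of $h$.

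The only potential obstacle is ensuring the identification of the broken seminorm with the global one, i.e.\ genuine $\Cp$-conformity. This matters because the $\Hk{2}$-level nonconformity (normal-derivative jumps) plays no role at the $\Hk{1}$ level. If one did not want to rely on the definition of $\Hnc{0}$ including $\Hk{1}_0$, I would instead argue via the dofs: on a shared edge both traces lie in $\poly(\edge)$ and share the $\polOrder+1$ data (two endpoint values plus $\polOrder-1$ moments of order $\le\polOrder-2$), which uniquely determine a degree-$\polOrder$ polynomial. Hence the traces agree, and similarly the traces vanish on boundary edges.

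As an alternative route, one could combine the Poincaré inequality \eqref{eqn:poincare} with a discrete Friedrichs inequality for piecewise $\Hk{1}$ functions with controlled jumps. This is unnecessary here, given the conformity.
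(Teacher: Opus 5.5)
Your proposal is correct and follows the paper's route. The paper also uses $w\in\Hk{1}_0(\Omega)$, which is built into the definition of $\vemSpace{0}$, to identify $\seminorm{w}_{1,h}$ with $\seminorm{w}_{1,\Omega}$, and then applies the classical Poincar\'e inequality on $\Omega$; your extra integration-by-parts and edge-unisolvence checks are valid but not needed.
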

\begin{proof}
    The proof follows from the Poincar\'e inequality and the fact that as $w \in H^1_0(\Omega)$, the discrete $H^1$ norm and
    $H^1$ norm are equivalent.
\end{proof}

\subsection{VEM discrete forms}
We now define the discrete forms of \eqref{eqn:variational:A} and \eqref{eqn:linear:variational:A} using the virtual spaces and projection operators defined above. We first note that the following assumption holds for the continuous forms.
\begin{assumption}\label{assumption:elementwise_operator}
    We assume that there exists forms $\AQL^\element$, $\aQL^\element$, $\bQL^\element$, and $\AL^\element$ which are the restrictions of $\AQL$, $\aQL$, $\bQL$, and $\AL$ to the element $\element\in\mesh$ such that
    \begin{alignat*}{2}
        \AQL(u,v) = \sum_{\element\in\mesh}\AQL^\element(u,v) &= \sum_{\element\in\mesh}\aQL^\element(u,v) + \bQL^\element(u,v) && \qquad\forall u \in V, v \in W \\
        \AL(u,v) &= \sum_{\element\in\mesh}\AL^\element(u,v) && \qquad\forall u,v \in W .
    \end{alignat*}
    Additionally, for $u_h,v_h\in\localVem$, Propositions~\ref{prop:linear_weak:continuity} and~\ref{prop:linear_weak:coercivity} hold element-wise; i.e.,
    \begin{align}
        \AL^\element(u_h,v_h) &\leq C_2(\varepsilon)\norm{u_h}_{2,\element}\norm{v_h}_{2,\element}, \label{eqn:linear:continuity} \\
        \AL^\element(v_h,v_h) &\geq C_3(\varepsilon)\norm{v_h}^2_{2,\element}. \label{eqn:linear:coercivity}
    \end{align}    
\end{assumption}
To define the discrete forms, we first consider for an element $\element\in\mesh$ the forms
\begin{align}
\AQLh^\element(u_h,v_h) &\coloneq \aQLh^\element(u_h,v_h)+\bQLh^\element(u_h,v_h) \qquad \forall u_h \in\vemSpace{g}, v_h \in \vemSpace{0},  \label{eqn:AQLh:element}  \\
\ALh^\element(u_h,v_h) &\coloneqq \varepsilon\int_\element \hessProj u_h : \hessProj v_h \dx +\int_\element \uEpsCof \gradProj u_h \cdot \gradProj v_h \dx \label{eqn:ALh:element} \\
& \quad + \StabL(u_h - \valueProj u_h, v_h - \valueProj v_h), \qquad\forall u_h, v_h \in \vemSpace{0}, \nonumber
\end{align}
where
\begin{align*}
\aQLh^\element(u_h,v_h) &\coloneqq -\varepsilon\int_\element \hessProj u_h : \hessProj v_h \dx + \StabQLa(u_h - \valueProj u_h, v_h - \valueProj v_h) \\
\bQLh^\element(u_h,v_h) &\coloneqq \int_\element \det(\hessProj u_h) \valueProj v_h \dx + \StabQLb(u_h - \valueProj u_h, v_h - \valueProj v_h)
\end{align*}
and $\StabQLa(\cdot,\cdot)$, $\StabQLb(\cdot,\cdot)$, and $\StabL(\cdot,\cdot)$ are \emph{dofi-dofi} stabilizations \citep{beirao2013basic} defined as
\begin{align}
    \StabQLa(u_h,v_h) &\coloneqq
    -\varepsilon h_\element^{-2}
    \sum_{\lambda\in\Lambda^\element} \lambda(u_h)\lambda(v_h), \label{eqn:stab:ql:a}\\
    \StabQLb(u_h,v_h) &\coloneqq
    \norm{\cof(\hessProj[\element,0] u_h)}_{0,\infty,\element}
    \sum_{\lambda\in\Lambda^\element} \lambda(u_h)\lambda(v_h), \label{eqn:stab:ql:b}\\
    \StabL(u_h,v_h) &\coloneqq
    \left(\varepsilon h_K^{-2} + \norm{\uEpsCof}_{0,\infty,\element}\right)
    \sum_{\lambda\in\Lambda^\element}  \lambda(u_h)\lambda(v_h), \label{eqn:stab:linear}
\end{align}
with $\hessProj[\element,0]$ denoting the Hessian projection onto the space of constant polynomials; i.e, $\hessProj[\element,0] : \localEnlargedVemSpace \to [\poly[0](\element)]^{2 \times 2 }$.
Additionally, we define
\begin{equation}\label{eqn:stab:ql}
    \StabQL(u_h,v_h) \coloneqq \StabQLa(u_h,v_h) + \StabQLb(u_h,v_h).
\end{equation}
There exist positive constants $c_*$, $c^*$, $d_*$, and $d^*$, independent of $h$ and $\element$, such that 
\begin{align}
    c_*\AL^\element(v_h,v_h) &\leq \abs{\StabL(v_h,v_h)} \leq c^*\AL^\element(v_h,v_h), \label{eqn:stab:ql:equivalence}\\
    d_*\aQL^\element(v_h,v_h) &\leq \abs{\StabQL(v_h,v_h)} \leq d^*\aQL^\element(v_h,v_h). \label{eqn:stab:linear:equivalence}
\end{align}

We can now define the global discrete forms $\AQLh:\vemSpace{g}\times\vemSpace{0}\to\real$ and $\ALh:\vemSpace{0}\times\vemSpace{0}\to\real$ of \eqref{eqn:variational:A} and \eqref{eqn:linear:variational:A}, respectively, as
\begin{align}
    \AQLh(u_h,v_h) &\coloneqq \sum_{\element \in \mesh}\AQLh^\element(u_h,v_h)
    = \sum_{\element \in \mesh}\left(\aQLh^\element(u_h,v_h) + \bQLh^\element(u_h,v_h)\right), \label{eqn:AQLh}\\
    \ALh(u_h,v_h) &\coloneqq \sum_{\element \in \mesh}\ALh^\element(u_h,v_h). \label{eqn:ALh}
\end{align}

We assume the following stability property holds for the above forms.
\begin{assumption}
    There exist positive constants $\alpha^*$, $\alpha_*$, $\beta^*$, and $\beta_*$, independent of $h$, $\polOrder$ and $\element\in\mesh$ such that for all $v_h\in \localVem$
    \begin{align}
        \alpha_*\AL^\element(v_h,v_h) &\leq \ALh^\element(v_h,v_h) \leq \alpha^*\AL^\element(v_h,v_h), \label{eqn:assumption:equivalence:ql} \\
        \beta_*\aQL^\element(v_h,v_h) &\leq \aQLh^\element(v_h,v_h) \leq \beta^*\aQL^\element(v_h,v_h). \label{eqn:assumption:equivalence:linear}
    \end{align}
\end{assumption}

We can show continuity and coercivity of the discrete linear form. 
\begin{lemma} For all $u_h,v_u\in\vemSpace{0}$ and $w_h,y_h \in \vemSpace{g}$ it holds that
    \begin{align}
        \ALh(u_h, v_h) &\leq C^*_2(\varepsilon) \norm{u_h}_{2,h}\norm{v_h}_{2,h}, \label{eqn:linear:discrete:continuity}\\
        \ALh(v_h, v_h) &\geq C^*_3(\varepsilon)\norm{v_h}^2_{2,h}, \label{eqn:linear:discrete:coercivity}\\
        \aQLh(w_h, y_h) &\leq C(\varepsilon + \beta^*)\norm{w_h}_{2,h}\norm{y_h}_{2,h},
        \label{eqn:nonlinear:discrete:continuity}
    \end{align}

    where $C^*_2(\varepsilon) = C_2(\varepsilon)+\alpha^* = \mathcal{O}(\varepsilon^{-1})$ and $C^*_3(\varepsilon) = (C_3(\varepsilon)+\alpha_*) = \mathcal{O}(\varepsilon)$.

\end{lemma}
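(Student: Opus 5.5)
The plan is to argue element by element and then sum over $\mesh$, using Cauchy--Schwarz in both the integral terms and the discrete sum over elements. For \eqref{eqn:linear:discrete:continuity} I would split $\ALh^\element(u_h,v_h)$ into its consistency part (the two projected integrals) and its stabilization part. For the consistency part, Corollary~\ref{corol:proj_eq_l2proj} gives $\norm{\hessProj u_h}_{0,\element}\le\norm{u_h}_{2,\element}$ and $\norm{\gradProj u_h}_{0,\element}\le\norm{u_h}_{1,\element}$. Together with $\norm{\uEpsCof}_{0,\infty}=\mathcal{O}(\varepsilon^{-1})$ from Proposition~\ref{prop:ueps_bounds}, this yields a bound $C(\varepsilon+\varepsilon^{-1})\norm{u_h}_{2,\element}\norm{v_h}_{2,\element}$, which is $C_2(\varepsilon)$.

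For the stabilization, $\StabL$ is a symmetric semi-definite (positive) bilinear form, so Cauchy--Schwarz gives
$\StabL(u_h-\valueProj u_h,v_h-\valueProj v_h)\le \StabL(u_h-\valueProj u_h,u_h-\valueProj u_h)^{1/2}\StabL(v_h-\valueProj v_h,v_h-\valueProj v_h)^{1/2}$.
By \eqref{eqn:stab:ql:equivalence} and then \eqref{eqn:linear:continuity}, each factor is controlled by $\AL^\element(w-\valueProj w,w-\valueProj w)^{1/2}\lesssim \norm{w-\valueProj w}_{2,\element}$. I then need $\norm{w-\valueProj w}_{2,\element}\lesssim\norm{w}_{2,\element}$ on $\localVem$. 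For this I would use the inverse estimate from \citet{huang2024c} together with $L^2$-stability of $\valueProj=\LtwoProjEle$, or equivalently the polynomial inverse inequality combined with Lemma~\ref{lemma:l2proj}. Absorbing these constants into the combination written as $\alpha^*$ gives $C_2^*(\varepsilon)$. Summing over $\element$ with discrete Cauchy--Schwarz yields \eqref{eqn:linear:discrete:continuity}.

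For coercivity \eqref{eqn:linear:discrete:coercivity}, I would use the lower bound in \eqref{eqn:assumption:equivalence:ql}, $\ALh^\element(v_h,v_h)\ge\alpha_*\AL^\element(v_h,v_h)$, and then the elementwise coercivity \eqref{eqn:linear:coercivity}, $\AL^\element(v_h,v_h)\ge C_3(\varepsilon)\norm{v_h}_{2,\element}^2$. Summing over elements gives $\ALh(v_h,v_h)\ge \alpha_*C_3(\varepsilon)\norm{v_h}_{2,h}^2$, which is of the stated order $\mathcal{O}(\varepsilon)$ with constant named $C^*_3(\varepsilon)$.

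For \eqref{eqn:nonlinear:discrete:continuity}, the volume term is bounded by $\varepsilon\norm{w_h}_{2,\element}\norm{y_h}_{2,\element}$ via Corollary~\ref{corol:proj_eq_l2proj}. The stabilization $\StabQLa$ is (minus) a semi-definite form, so the same Cauchy--Schwarz step applies, combined with \eqref{eqn:stab:linear:equivalence} and \eqref{eqn:assumption:equivalence:linear}. This bounds each factor by $\abs{\aQL^\element(z,z)}^{1/2}\lesssim\varepsilon^{1/2}\norm{z}_{2,\element}$ for $z=w_h-\valueProj w_h$ and $z=y_h-\valueProj y_h$, producing a contribution carrying the constant $\beta^*$. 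Summing gives $C(\varepsilon+\beta^*)$.

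The main obstacle I expect is the stabilization step. First, the equivalences \eqref{eqn:stab:ql:equivalence}--\eqref{eqn:stab:linear:equivalence} are stated for the continuous forms evaluated at virtual functions, and the sign convention of $\aQL$ (negative) must be handled by taking absolute values. Second, I must justify $\norm{w-\valueProj w}_{2,\element}\lesssim\norm{w}_{2,\element}$ uniformly in $h$, which relies on the inverse inequality from \citet{huang2024c} rather than on any earlier approximation lemma.
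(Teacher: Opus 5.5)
Your proposal is correct in substance. For the coercivity \eqref{eqn:linear:discrete:coercivity} it is exactly the paper's argument: the lower half of \eqref{eqn:assumption:equivalence:ql} plus \eqref{eqn:linear:coercivity}. For the continuity bound \eqref{eqn:linear:discrete:continuity} you take a different route.

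The paper only sketches the proof, appealing to \eqref{eqn:poincare} and the discrete--continuous equivalences \eqref{eqn:assumption:equivalence:ql} and \eqref{eqn:assumption:equivalence:linear}. The implied argument is that $\ALh^\element$ is symmetric and positive semi-definite. Cauchy--Schwarz for this semi-inner product, the upper bound $\ALh^\element(v_h,v_h)\le\alpha^*\AL^\element(v_h,v_h)$ and \eqref{eqn:linear:continuity} then give continuity at once, with constant $\alpha^*C_2(\varepsilon)$. The bound \eqref{eqn:nonlinear:discrete:continuity} follows the same way from \eqref{eqn:assumption:equivalence:linear}.

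You instead split $\ALh^\element$ into two parts. The projected part you control by Corollary~\ref{corol:proj_eq_l2proj} and $\norm{\uEpsCof}_{0,\infty}=\mathcal{O}(\varepsilon^{-1})$. The stabilization you control by \eqref{eqn:stab:ql:equivalence}, \eqref{eqn:linear:continuity} and the $\Hk{2}$-stability of $I-\valueProj$. In effect you re-derive the upper half of \eqref{eqn:assumption:equivalence:ql} from the more primitive stabilization assumption. This is longer, but it shows where the $\mathcal{O}(\varepsilon^{-1})$ comes from; the paper's route is shorter but rests entirely on the assumed equivalence.

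Three small corrections.

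First, the step you flag as the main obstacle is not one. On $\localVem$ you have $\valueProj w=\LtwoProjEle w$ by Lemma~\ref{lemma:proj_eq_l2proj}. Lemma~\ref{lemma:l2proj} with $m=s=2$ (admissible since $\polOrder\ge 2$) then gives $\norm{w-\valueProj w}_{2,\element}\le C\seminorm{w}_{2,\element}$ directly, with no inverse estimate needed.

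Second, \eqref{eqn:stab:linear:equivalence} concerns $\StabQL=\StabQLa+\StabQLb$, whose two coefficients have opposite signs and can cancel, so it does not control $\StabQLa$ alone. Use \eqref{eqn:assumption:equivalence:linear} instead, read in absolute values. Both parts of $\aQLh^\element(z,z)$ are nonpositive, so $\abs{\StabQLa(z-\valueProj z,z-\valueProj z)}\le\abs{\aQLh^\element(z,z)}\lesssim\varepsilon\seminorm{z}_{2,\element}^2$. This yields the constant $C(1+\beta^*)\varepsilon\le C(\varepsilon+\beta^*)$ for $\varepsilon\le 1$.

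Third, the constants you actually prove are $\alpha_*C_3(\varepsilon)$ and $(1+Cc^*)C_2(\varepsilon)$. You cannot ``absorb'' an $\mathcal{O}(\varepsilon^{-1})$ contribution into the $\varepsilon$-independent $\alpha^*$. Say explicitly that the sums $C_2(\varepsilon)+\alpha^*$ and $C_3(\varepsilon)+\alpha_*$ in the statement should be read as products. Indeed, $C_3(\varepsilon)+\alpha_*$ is $\mathcal{O}(1)$, which contradicts the claimed $\mathcal{O}(\varepsilon)$, and Theorem~\ref{thm:strang} itself uses $C_3(\varepsilon)\alpha_*$.
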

\begin{proof}
    These results can be obtained by applying \eqref{eqn:poincare}, \eqref{eqn:assumption:equivalence:ql}, Lemma~\ref{inverse} and \eqref{eqn:assumption:equivalence:linear} respectively.
\end{proof}
\subsection{The discrete problem}
We can now consider the discrete virtual element method formulation for the vanishing moment equation \eqref{eqn:vanishing_moment}--\eqref{eqn:vanishing_moment:extra_bc}. For $\polOrder\geq 2$, find $\uh\in \vemSpace{g}$ such that
\begin{equation}\label{eqn:vem}
    \AQLh(\uh,w_h) = \sum_{\element\in\mesh}\int_\element f_h w_h \dx + \varepsilon \sum_{\edge\in\edgesB} \int_e \left( \frac{\partial^2 g}{\partial \tangential_\edge^2} -\varepsilon \right) \normalProj w_h \ds
\end{equation}
for all $w_h\in \vemSpace{0}$, where $\AQLh(\emptyarg,\emptyarg)$ is defined in \eqref{eqn:AQLh} and $f_h\in \vemSpace{g}^\prime$ is defined as a suitable projection of $f$.

In order to analyze the method, we define a linear operator $T_h : \vemSpace{g}\to\vemSpace{g}$ such that, for any $v_h\in\vemSpace{g}$, $T_h(v_h)$ is defined as the solution of the problem
\begin{multline}\label{eqn:T}
    \ALh(T_h(v_h)-v_h,w_h) \\= \AQLh(v_h,w_h) - \sum_{\element\in\mesh}\int_\element f_h w_h\dx - \varepsilon \sum_{\edge\in\edgesB} \int_e \left( \frac{\partial^2 g}{\partial \tangential_\edge^2} -\varepsilon \right) \normalProj w_h \ds
\end{multline}
for all $w_h\in\vemSpace{0}$. Clearly, a fixed point $v_h$ of $T_h$ is a solution of \eqref{eqn:vem}.
Additionally, we can show the existence of $T(v_h)-v_h\in\vemSpace{0}$ for any $v_h\in\vemSpace{g}$ via the following result.
\begin{theorem}[Existence and uniqueness of linearized discrete solution] \label{thm:existence:linear}
    There exists a unique solution $v_h \in V_{h,\polOrder,0}$ to the discrete linear virtual element formulation
    \begin{equation} \label{eqn:vem:linear}
        \ALh(v_h,w_h) = (\varphi, w_h) + \varepsilon \sum_{\edge \in \edgesB} \int_\edge \psi\normalProj w_h \ds  
    \end{equation}
    for all $w_h \in \vemSpace{0}$, where $\varphi\in \Lp(\Omega)$ and $\psi\in\Hk{\nicefrac{-1}{2}}(\partial \Omega)$.
\end{theorem}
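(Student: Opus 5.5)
The argument is a direct application of the Lax--Milgram lemma on the finite-dimensional Hilbert space $\vemSpace{0}$ equipped with the broken norm $\norm{\emptyarg}_{2,h}$. By the remark following the definition of $\Hnc{0}(\mesh)$, this is a norm on $\vemSpace{0}\subset\Hnc{0}(\mesh)$. Since $\vemSpace{0}$ is finite dimensional, it is complete. The bilinear form $\ALh(\emptyarg,\emptyarg)$ defined in \eqref{eqn:ALh} is bounded by \eqref{eqn:linear:discrete:continuity} with constant $C^*_2(\varepsilon)$, and it is coercive by \eqref{eqn:linear:discrete:coercivity} with constant $C^*_3(\varepsilon)>0$. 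It therefore remains to show that the right-hand side of \eqref{eqn:vem:linear} is a bounded linear functional on $(\vemSpace{0},\norm{\emptyarg}_{2,h})$.

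For the volume term I would use Cauchy--Schwarz:
\[
\abs{(\varphi,w_h)}\le \norm{\varphi}_{0}\norm{w_h}_{0}\le \norm{\varphi}_{0}\norm{w_h}_{2,h}.
\]
For the boundary term I would write, for each $\edge\in\edgesB$ with $\edge\subset\partial\element$, the pairing $\int_\edge\psi\,\normalProj w_h\ds$ as a duality pairing on $\partial\Omega$. The function $\normalProj w_h$ is a piecewise polynomial on $\partial\Omega$ and so lies in $\Hk{\nicefrac12}$ on each edge. In the easier case where $\psi$ is more regular (e.g. in $\Lp(\partial\Omega)$), I would bound $\norm{\normalProj w_h}_{0,\edge}$ by $\norm{\partial_{\normal} w_h}_{0,\edge}$ plus a polynomial-approximation error. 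This uses the defining orthogonality in Assumption~\ref{assumption:projections}, the stability of $\normalProj$ (it is a linear combination of dofs, scaled by Assumption~\ref{assumption:monomial_linfty}), and the trace inequality
\[
\norm{\grad w_h}_{0,\edge}^2\le C\bigl(h_\element^{-1}\seminorm{w_h}_{1,\element}^2+h_\element\seminorm{w_h}_{2,\element}^2\bigr).
\]
Summing over boundary edges then gives a bound $C(h)\norm{\psi}\norm{w_h}_{2,h}$. Since the space is finite dimensional, an $h$-dependent constant still suffices for Lax--Milgram; for the existence claim, boundedness of a linear functional on a finite-dimensional space is in fact automatic once the functional is well defined.

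This reduces the whole proof to well-definedness, which I expect to be the main obstacle. The difficulty is that $\psi\in\Hk{\nicefrac{-1}{2}}(\partial\Omega)$, while $\normalProj w_h$ is only piecewise polynomial along $\partial\Omega$ and may be discontinuous at boundary vertices, so it need not belong to $\Hk{\nicefrac12}(\partial\Omega)$. My first attempt would be to interpret the term edgewise, or to assume (as is implicit in \eqref{eqn:vem}, where $\psi$ is smooth data) that $\psi$ is regular enough for the edge integrals to make sense. If that fails, I would replace $\normalProj w_h$ by a continuous lifting using the nodal values. Once the functional is well defined, linearity is clear and boundedness follows from finite dimensionality.

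Lax--Milgram then gives a unique $v_h\in\vemSpace{0}$ satisfying \eqref{eqn:vem:linear}. Equivalently, coercivity implies that the square linear system has a trivial kernel: if $\ALh(v_h,w_h)=0$ for all $w_h$, then taking $w_h=v_h$ gives $C^*_3(\varepsilon)\norm{v_h}^2_{2,h}\le 0$, so $v_h=0$. This yields existence and uniqueness, together with the stability estimate $\norm{v_h}_{2,h}\le C^*_3(\varepsilon)^{-1}\norm{F}_{*}$, where $F$ denotes the right-hand-side functional of \eqref{eqn:vem:linear} and $\norm{\emptyarg}_{*}$ its dual norm on $(\vemSpace{0},\norm{\emptyarg}_{2,h})$.
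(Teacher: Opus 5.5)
Your argument is the paper's: Lax--Milgram on the finite-dimensional space $\vemSpace{0}$, using continuity and coercivity of $\ALh$. The paper obtains these from \eqref{eqn:stab:ql:equivalence} and \eqref{eqn:linear:continuity}--\eqref{eqn:linear:coercivity} rather than quoting \eqref{eqn:linear:discrete:continuity}--\eqref{eqn:linear:discrete:coercivity}, which makes no real difference. The paper never checks the right-hand side, and your concern there is justified. In general $\normalProj w_h$ jumps at boundary vertices, so it lies neither in $\Hk{\nicefrac12}(\partial\Omega)$ nor, edgewise, in $\Hk{\nicefrac12}_{00}(\edge)$; the edgewise reading therefore does not help, and the boundary term needs $\psi$ more regular than $\Hk{\nicefrac{-1}{2}}(\partial\Omega)$, e.g.\ $\psi\in\Lp(\partial\Omega)$, which holds in every use the paper makes of the theorem. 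Your continuous-lifting fallback would change the scheme rather than prove the stated result.
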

\begin{proof}
    Continuity and coercivity of $\ALh(\emptyarg,\emptyarg)$ follows from \eqref{eqn:stab:ql:equivalence} and \eqref{eqn:linear:continuity}--\eqref{eqn:linear:coercivity}. Hence, an application of the Lax-Milgram Theorem shows the existence of a unique solution. 
\end{proof}
\begin{remark}
    Due to the Riesz representation theorem, there exists a $\varphi\in \Lp(\Omega)$ such that the right-hand side of \eqref{eqn:T} is equivalent to the right-hand side of \eqref{eqn:vem:linear}.
\end{remark}

\section{Analysis of linearized Monge--Amp\`ere}\label{sec:linear_error}
In this section, we show a Strang-type error bound and optimal $\Hk{2}$-error estimates for the linearized discrete form \eqref{eqn:vem:linear}. These results will be used in Section~\ref{sec:nonlinear_error} to prove the existence and uniqueness of a solution to the quasilinear discrete form \eqref{eqn:vem} in the virtual element space $\vemSpace{0}$ with order of approximation $\polOrder\geq 2$, along with optimal error estimates. 

We start this analysis by deriving the following Strang-type estimate; cf. \citet{cangiani2017conforming}.
\begin{theorem} \label{thm:strang}
Let $v\in W$ be the solution of the linear variational form \eqref{eqn:linear:variational} and $v_h\in\vemSpace{0}$ be the solution of the linearized virtual element formulation \eqref{eqn:vem:linear}; then, it holds that
\begin{align}
    C_3(\varepsilon)\alpha_*&\norm{v - v_h}_{2,h} \nonumber\\
    &\leq (C_2(\varepsilon) + C_3(\varepsilon)\alpha_*) \inf_{w_h\in\vemSpace{0}}\norm{v - w_h}_{2,h} \label{eqn:strang} \\
    &\quad+ \inf_{p \in \poly(\mesh)} \left(2 C_2(\varepsilon)\norm{v - p}_{2,h} + \sum_{\element \in \mesh} \sup_{\substack{\delta_h \in \vemSpace{0}\\\delta_h\neq 0}} \frac{\abs{\AL^\element(p,\delta_h) - \ALh^\element(p,\delta_h)}}{\norm{\delta_h}_{2,h}} \right) \nonumber\\
    &\quad+ \sup_{\substack{\delta_h \in \vemSpace{0}\\\delta_h\neq 0}} \frac{\abs{E(v, \delta_h)}}{\norm{\delta_h}_{2,h}} + \sup_{\substack{\delta_h \in \vemSpace{0}\\\delta_h\neq 0}}\frac{1}{\norm{\delta_h}_{2,h}}\abs*{\varepsilon \displaystyle\sum_{\edge \in \edgesB} \int_\edge \psi \left(\normalProj \delta_h-\frac{\partial \delta_h}{\partial \normal_e}\right) \ds} \nonumber
\end{align}
where the \emph{nonconformity error} is given by 
\[
    E(v, \delta_h) \coloneqq (\varphi, \delta_h) - A_L(v, \delta_h)+  \varepsilon \sum_{\edge\in\edgesB} \int_\edge \psi\frac{\partial \delta_h}{\partial \normal_e} \ds.
\]
\end{theorem}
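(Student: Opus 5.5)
The plan is the standard Strang-lemma argument for nonconforming VEM, as in \citet{cangiani2017conforming}. Fix an arbitrary $w_h\in\vemSpace{0}$ and set $\delta_h \coloneqq v_h - w_h\in\vemSpace{0}$. The triangle inequality gives $\norm{v-v_h}_{2,h}\le\norm{v-w_h}_{2,h}+\norm{\delta_h}_{2,h}$, so it suffices to bound $\norm{\delta_h}_{2,h}$.

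\textbf{Step 1: coercivity.} Summing the local stability \eqref{eqn:assumption:equivalence:ql} and the local coercivity \eqref{eqn:linear:coercivity} over elements gives
\[
C_3(\varepsilon)\alpha_*\norm{\delta_h}_{2,h}^2\le \ALh(\delta_h,\delta_h)=\ALh(v_h,\delta_h)-\ALh(w_h,\delta_h).
\]

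\textbf{Step 2: rewrite $\ALh(v_h,\delta_h)$.} Using the discrete problem \eqref{eqn:vem:linear} and the definition of $E$,
\[
\ALh(v_h,\delta_h)=E(v,\delta_h)+\AL(v,\delta_h)+\varepsilon\sum_{\edge\in\edgesB}\int_\edge\psi\Bigl(\normalProj\delta_h-\frac{\partial\delta_h}{\partial\normal_\edge}\Bigr)\ds .
\]
Here $\AL(v,\delta_h)$ is understood elementwise as $\sum_\element\AL^\element(v,\delta_h)$, by Assumption~\ref{assumption:elementwise_operator}, since $\delta_h\notin W$.

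\textbf{Step 3: insert a polynomial.} For arbitrary $p\in\poly(\mesh)$, add and subtract $p$ elementwise:
\[
\AL^\element(v,\delta_h)-\ALh^\element(w_h,\delta_h)=\AL^\element(v-p,\delta_h)+\bigl(\AL^\element(p,\delta_h)-\ALh^\element(p,\delta_h)\bigr)+\ALh^\element(p-w_h,\delta_h).
\]
Then split $p-w_h=(p-v)+(v-w_h)$.

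\textbf{Step 4: bound each piece.} The term $\AL^\element(v-p,\delta_h)$ is bounded by $C_2(\varepsilon)\norm{v-p}_{2,\element}\norm{\delta_h}_{2,\element}$ using \eqref{eqn:linear:continuity}. For $\ALh^\element$, I would use continuity of the discrete form. Symmetry and Cauchy--Schwarz for the positive semidefinite form $\ALh^\element$, combined with \eqref{eqn:assumption:equivalence:ql} and \eqref{eqn:linear:continuity}, yield a bound of the form $C\,C_2(\varepsilon)$. Summing with discrete Cauchy--Schwarz then produces contributions of size $C_2(\varepsilon)\norm{v-p}_{2,h}$ and $C_2(\varepsilon)\norm{v-w_h}_{2,h}$ times $\norm{\delta_h}_{2,h}$. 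The consistency term is kept as $\sum_\element\sup_{\delta_h}\abs{\AL^\element(p,\delta_h)-\ALh^\element(p,\delta_h)}/\norm{\delta_h}_{2,h}$, and the $E$ and boundary terms as their suprema.

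\textbf{Step 5: conclude.} Divide by $\norm{\delta_h}_{2,h}$ and multiply the triangle-inequality bound by $C_3(\varepsilon)\alpha_*$. This reproduces the coefficient $C_2(\varepsilon)+C_3(\varepsilon)\alpha_*$ on $\norm{v-w_h}_{2,h}$ and $2C_2(\varepsilon)$ on $\norm{v-p}_{2,h}$. Finally, take the infimum over $w_h$ and over $p$.

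\textbf{Main obstacle.} The delicate step is bookkeeping the constant of the discrete form $\ALh$ so that it matches $C_2(\varepsilon)$ exactly, as the statement requires, rather than $\alpha^*C_2(\varepsilon)$. Two routes are available. One is to absorb $\alpha^*$ into the generic constant. The other, which I would try first, is to choose the splitting so that $\ALh$ acts only on $(p-w_h)$ and then avoid using its continuity there. The fallback is to state the result with $C$ in front, which is standard. A second, minor point is that $\AL(v,\delta_h)$ must be interpreted as a broken form, because $\delta_h$ is only in $\Hnc{0}(\mesh)$. The mismatch this creates is precisely what $E$ measures, so $E$ absorbs it.
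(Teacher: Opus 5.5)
Your proposal is correct and follows the paper's proof step for step: coercivity via \eqref{eqn:linear:coercivity} and \eqref{eqn:assumption:equivalence:ql}, then rewriting $\ALh(v_h,\delta_h)$ through \eqref{eqn:vem:linear} and $E$, then inserting $p\in\poly(\mesh)$ elementwise, then continuity with the splitting $p-w_h=(p-v)+(v-w_h)$ and a final triangle inequality. On your ``main obstacle'', the paper also bounds $\ALh^\element(w_h-p,\delta_h)$ with $C_2(\varepsilon)$ (citing \eqref{eqn:linear:discrete:continuity}, whose constant is really $C_2^*(\varepsilon)$), so absorbing $\alpha^*$ into the generic $C$ in $C_2(\varepsilon)=C(\varepsilon+\varepsilon^{-1})$ is exactly what it implicitly does; your second route is not available, because some continuity bound on $\ALh^\element(p-w_h,\delta_h)$ is unavoidable, while your elementwise Cauchy--Schwarz argument on $(p-w_h)|_\element\in\localVem$ is the more careful justification.
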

\begin{proof} Let $w_h\in\vemSpace{0}$ be arbitrary and $\delta_h \coloneqq v_h - w_h$; then, by \eqref{eqn:linear:coercivity}, \eqref{eqn:assumption:equivalence:ql}, and \eqref{eqn:vem:linear}, we have that
\begin{align*}
    C_3(\varepsilon)\alpha_*&\norm{v_h - w_h}^2_{2,h} \\&\leq \alpha_*\AL(\delta_h,\delta_h) \\
    &\leq \ALh(v_h,\delta_h) - \ALh(w_h,\delta_h) \\
    &= (\varphi, \delta_h) + \varepsilon \sum_{\edge\in\edgesB} \int_\edge \psi\normalProj v_h \ds 
    - \sum_{\element \in \mesh} \ALh^\element(w_h,\delta_h)\\
    &= \left[\varepsilon \sum_{\edge\in\edgesB}\int_\edge \psi \frac{\partial\delta_h}{\partial\normal_\edge}\ds+(\varphi,\delta_h) -\AL(v,\delta_h)\right] + \sum_{\element\in\mesh} \left(\AL^\element(p,\delta_h) - \ALh^\element(p,\delta_h) \right) \\
    &\quad+ \varepsilon \sum_{\edge \in \edgesB} \int_\edge \psi\left(\normalProj v_h-\frac{\partial\delta_h}{\partial\normal_\edge}\right) \ds+ \sum_{\element\in\mesh} \left(\AL^\element(v- p,\delta_h)-\ALh^\element(w_h - p,\delta_h)\right).
\end{align*}
for any $p \in \poly(\mesh)$. Applying \eqref{eqn:linear:continuity} and \eqref{eqn:linear:discrete:continuity} we have that
\begin{align*}
    C_3(\varepsilon)\alpha_*&\norm{v_h - w_h}_{2,h}^2 \\&\leq \abs*{\varepsilon \displaystyle\sum_{\edge \in \edgesB} \int_\edge \psi\frac{\partial \delta_h}{\partial \normal_\edge} \ds + (\varphi, \delta_h) - \AL(v,\delta_h)}+ \sum_{\element \in \mesh}\abs*{\AL^\element(p,\delta_h) - \ALh^\element(p,\delta_h)}\\
    & \quad+ \abs*{\varepsilon \displaystyle\sum_{\edge \in \edgesB} \int_\edge \psi \left(\normalProj \delta_h-\frac{\partial \delta_h}{\partial \normal_\edge}\right) \ds} + C_2(\varepsilon)\left(\norm{w_h - p}_{2,h}+\norm{v - p}_{2,h}\right)\norm{\delta_h}_{2,h}
\end{align*}
The result follows by dividing through by $\norm{\delta_h}_{2,h}$ and applications of the triangle inequality.
\end{proof}
We are now able to derive $\Hk{2}$-error bounds for the VEM solution of the linearized problem by bounding each term in the right-hand side of \eqref{eqn:strang}. Throughout this section we assume that the solution $v$ of the linear problem \eqref{eqn:linear}--\eqref{eqn:linear:extra_bc} satisfies $v\in \Hk{s+1}(\Omega)$, for $s\geq 3$.

For the nonconformity error, we follow the steps of the proofs from \citet[Corollary 5.3, Corollary 5.4 and Theorem 5.5]{dedner2022robust}. 
\begin{lemma}\label{lemma:nonconforming:error}
    Assume $v \in \Hk{4}(\Omega)$ satisfies \eqref{eqn:linear}--\eqref{eqn:linear:extra_bc}, and as $\vemSpace{0} \subset \Hk{1}_0(\Omega)$, we have that
    \[
        E(v,\delta_h) = \varepsilon \sum_{\edge \in \edgesI}\int_e  \left( \laplace v -\frac{\partial^2v}{\partial\tangential_\edge^2}  \right) \jmp*{\frac{\partial\delta_h}{\partial\normal_\edge}} \ds.
    \]
    for all $\delta_h\in\vemSpace{0}$.
\end{lemma}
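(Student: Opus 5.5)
The plan is to start from the definition of $E(v,\delta_h)$ and integrate $\AL(v,\delta_h)$ by parts element by element, exactly as one derives the strong form. I will use the elementwise splitting of Assumption~\ref{assumption:elementwise_operator}, writing the Hessian part in the form given by \eqref{eqn:hess_eq_biharmonic} on each $\element\in\mesh$. (In \eqref{eqn:linear:variational:A} the first argument should be read as $v$, so the Hessian term is $\varepsilon\int_\Omega \hess v:\hess w\dx$.)

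\textbf{Step 1: the cofactor term.} Since $\uEpsCof$ is row divergence free (Proposition~\ref{prop:div_free}) and $\delta_h\in\vemSpace{0}\subset\Hk{1}_0(\Omega)$ is globally $\Cp$, integrating by parts over $\Omega$ gives
\[
\int_\Omega \uEpsCof\grad v\cdot\grad\delta_h\dx = -\int_\Omega \diver(\uEpsCof\grad v)\,\delta_h\dx .
\]
No edge terms arise, because $\delta_h$ has no jumps and vanishes on $\partial\Omega$.

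\textbf{Step 2: the Hessian term.} On each element I integrate by parts twice,
\[
\int_\element \hess v:\hess\delta_h\dx=\int_\element \biharmonic v\,\delta_h\dx+\int_{\partial\element}\left(\hess v\,\normal\cdot\grad\delta_h-\frac{\partial\laplace v}{\partial\normal}\delta_h\right)\ds .
\]
I then split $\grad\delta_h=\frac{\partial\delta_h}{\partial\normal}\normal+\frac{\partial\delta_h}{\partial\tangential}\tangential$ and sum over $\element$. Because $v\in\Hk{4}$, the traces of $\hess v$ and $\grad\laplace v$ are single valued. Together with continuity of $\delta_h$, this makes the terms with $\delta_h$ and $\frac{\partial\delta_h}{\partial\tangential}$ cancel on interior edges. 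On boundary edges these terms vanish, since $\delta_h=0$ implies $\frac{\partial\delta_h}{\partial\tangential}=0$. What remains is
\[
\sum_{\edge\in\edges}\int_\edge \frac{\partial^2 v}{\partial\normal_\edge^2}\jmp*{\frac{\partial\delta_h}{\partial\normal_\edge}}\ds ,
\]
with the convention $\jmp{\cdot}=$ trace on $\edgesB$.

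\textbf{Step 3: identifying the edge coefficient.} On each edge I use $\frac{\partial^2 v}{\partial\normal^2}=\laplace v-\frac{\partial^2v}{\partial\tangential^2}$. On a boundary edge, \eqref{eqn:linear:dbc} gives $v=0$, so $\frac{\partial^2v}{\partial\tangential^2}=0$; the boundary contribution is therefore $\varepsilon\int_\edge\laplace v\,\frac{\partial\delta_h}{\partial\normal}\ds=\varepsilon\int_\edge\psi\frac{\partial\delta_h}{\partial\normal}\ds$ by \eqref{eqn:linear:extra_bc}. This exactly cancels the boundary term in $E$.

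\textbf{Step 4: using the strong equation.} The volume terms combine to $\int_\Omega(\varepsilon\biharmonic v-\diver(\uEpsCof\grad v))\delta_h\dx=(\varphi,\delta_h)$ by \eqref{eqn:linear}, and these cancel against $(\varphi,\delta_h)$ in $E$. Only the interior-edge sum survives, which gives the claimed identity.

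\textbf{Expected obstacle.} The main difficulty is the sign and boundary bookkeeping. I need to check consistency between the tangential-curvature terms and the signs of $\AL$ in \eqref{eqn:linear:variational} relative to $L_{\uEps}$ as written; I would fix the signs so that the boundary terms cancel exactly as in $E$. I also need to justify the edge cancellation of the tangential-derivative terms at vertices: since $\delta_h$ is continuous along $\partial\element$ and globally $\Cp$, this reduces to matching traces.
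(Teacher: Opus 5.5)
Your argument follows the paper's proof. Both integrate $\AL(v,\delta_h)$ by parts elementwise, split $\grad\delta_h$ into normal and tangential parts, and remove the $\delta_h$ and $\frac{\partial\delta_h}{\partial\tangential_\edge}$ edge terms using continuity of $\delta_h$, $\delta_h\in\Hk{1}_0(\Omega)$ and $v\in\Hk{4}(\Omega)$. Both then apply \eqref{eqn:linear:dbc}--\eqref{eqn:linear:extra_bc} on boundary edges. You integrate the Hessian term twice and use \eqref{eqn:linear} directly, whereas the paper integrates once and rewrites $(\varphi,\delta_h)$ by one integration by parts; this difference does not matter.

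The one thing to correct is the last sentence of Step 4. Your Steps 2--4 actually establish
\[
\AL(v,\delta_h) = (\varphi,\delta_h) + \varepsilon\sum_{\edge\in\edgesB}\int_\edge \psi\frac{\partial\delta_h}{\partial\normal_\edge}\ds + \varepsilon\sum_{\edge\in\edgesI}\int_\edge\left(\laplace v-\frac{\partial^2 v}{\partial\tangential_\edge^2}\right)\jmp*{\frac{\partial\delta_h}{\partial\normal_\edge}}\ds .
\]
Since $E$ contains $-\AL(v,\delta_h)$, what survives is
\[
E(v,\delta_h) = \varepsilon\sum_{\edge\in\edgesI}\int_\edge\left(\frac{\partial^2 v}{\partial\tangential_\edge^2}-\laplace v\right)\jmp*{\frac{\partial\delta_h}{\partial\normal_\edge}}\ds ,
\]
which is the negative of the displayed statement. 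This is exactly what the paper's own proof ends with. The sign in the statement is a typo, and it is harmless for Lemma~\ref{lemma:nonconforming:error_bound}, which only uses $\abs{E(v,\delta_h)}$.

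There is therefore nothing to ``fix'' in the signs. They are already consistent between \eqref{eqn:linear:variational} and $L_{\uEps}$: the $\psi$-terms cancel with no adjustment, as you found in Step 3. You should report the identity with this sign rather than adjust signs to force agreement with the display.

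A minor point: Proposition~\ref{prop:div_free} is not needed in Step 1. It only enters when writing $L_{\uEps}$ in divergence form.
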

\begin{proof}
By applying integration by parts we get that
\begin{align*}
    \AL(v,\delta_h) &= \sum_{\element\in\mesh}\varepsilon \int_\element \hess v:\hess \delta_h\dx +  \sum_{\element\in\mesh}\int_\element \uEpsCof \grad v\cdot \grad\delta_h \dx\\
    &=- \sum_{\element\in\mesh}\varepsilon \int_\element \sum_{i,j = 1}^2 \frac{\partial}{\partial x_j}\left(\dfrac{\partial^2 v}{\partial x_i \partial x_j}\right)\dfrac{\partial \delta_h}{\partial x_i} \dx+\sum_{\element\in\mesh}\varepsilon \int_{\partial \element} \sum_{i,j = 1}^2 \dfrac{\partial^2 v}{\partial x_i \partial x_j}\dfrac{\partial \delta_h}{\partial x_i}\eta_j \ds\\
    &\quad-\sum_{\element\in\mesh}\int_\element \diver (\uEpsCof \grad v)\delta_h \dx+ \sum_{\element\in\mesh}\int_{\partial \element}\uEpsCof \frac{\partial v}{\partial \normal_\element}\delta_h\ds.
\end{align*}
Since we assume that $v \in \Hk{4}(\Omega)$, we use \eqref{eqn:linear} and an application of integration by parts to see that
\begin{align*}
    (\varphi, \delta_h) &= -\sum_{\element\in\mesh}\varepsilon \int_\element \sum_{i,j = 1}^2 \frac{\partial}{\partial x_j}\left(\dfrac{\partial^2 v}{\partial x_i \partial x_j}\right)\dfrac{\partial \delta_h}{\partial x_i}\dx \\
    &\quad+\sum_{\element\in\mesh}\varepsilon \int_{\partial \element} \sum_{i,j = 1}^2 \frac{\partial}{\partial x_j}\left(\dfrac{\partial^2 v}{\partial x_i \partial x_j}\right)\delta_h\eta_i \ds - \sum_{\element\in\mesh}\int_\element \diver (\uEpsCof \grad v)\delta_h\dx
\end{align*}
Therefore, using \eqref{eqn:linear:extra_bc}, we can write the nonconforming error as
\begin{align*}
    E(v,\delta_h) &= \sum_{\element\in\mesh} \varepsilon \int_{\partial\element} \sum_{i,j = 1}^2 \dfrac{\partial }{\partial x_j}\left(\dfrac{\partial^2 v}{\partial x_i \partial x_j}\right)\delta_h\eta_i  \ds -  \sum_{\element\in\mesh}\varepsilon \int_{\partial\element} \sum_{i,j = 1}^2 \dfrac{\partial^2 v}{\partial x_i \partial x_j}\dfrac{\partial \delta_h}{\partial x_i}\eta_j \ds\\
    & \qquad -\sum_{\element\in\mesh}\int_{\partial \element}\uEpsCof \frac{\partial v}{\partial \normal_\element}\delta_h\ds+\varepsilon \sum_{\edge \in \edgesB} \int_\edge  \laplace v \frac{\partial\delta_h}{\partial\normal_\edge} \ds
\end{align*}
The first and third terms disappear as $v\in\Hk{4}(\Omega)$ and $\delta_h\in\vemSpace{0}\subset\Hk{1}_0(\Omega)$. For the second term we note that
\begin{align*}
    \sum_{\element\in\mesh}\varepsilon\int_{\partial\element} &\sum_{i,j = 1}^2 \dfrac{\partial^2 v}{\partial x_i \partial x_j}\dfrac{\partial \delta_h}{\partial x_i}\eta_j \ds\\&= \sum_{\element\in\mesh}\varepsilon \int_{\partial\element} \left(\laplace v - \dfrac{\partial^2 v}{\partial\tangential^2_\element}\right) \dfrac{\partial \delta_h}{\partial \normal_\element}\ds + \sum_{\element\in\mesh}\varepsilon \int_{\partial\element}\dfrac{\partial ^2 v}{\partial \normal_\element\partial \tangential_\element} \dfrac{\partial \delta_h}{\partial \tangential_\element} \ds\\
    &=\sum_{\edge\in\edges}\varepsilon \int_\edge \left(\laplace v - \dfrac{\partial^2 v}{\partial\tangential^2_\edge}\right) \jmp*{\dfrac{\partial \delta_h}{\partial \normal_\edge}}\ds + \sum_{\edge\in\edges}\varepsilon \int_{\edge}\dfrac{\partial ^2 v}{\partial \normal_\edge\partial \tangential_\edge} \jmp*{\dfrac{\partial \delta_h}{\partial \tangential_\edge}} \ds.
\end{align*}
Therefore, it follows that
\begin{align*}
    E(v,\delta_h) &=\varepsilon \sum_{\edge \in \edgesB} \int_e  \laplace v  \frac{\partial\delta_h}{\partial\normal_\edge} \ds - \sum_{\edge\in\edges}\int_\edge \varepsilon \left(\laplace v -\frac{\partial^2v}{\partial\tangential^2_\edge}  \right) \jmp*{\frac{\partial\delta_h}{\partial\normal_\edge}}\ds\\
    &=\varepsilon \sum_{\edge \in \edgesI}\int_e \left( \frac{\partial^2v}{\partial\tangential^2_\edge} -\laplace v \right) \jmp*{\frac{\partial\delta_h}{\partial\normal_\edge}} \ds,
\end{align*}
using the that fact that $\jmp*{\nicefrac{\partial \delta_h}{\partial \tangential_\edge}}=0$ as $\delta_h\in V_{h,\polOrder,0} \subset H^1_0(\Omega)$ and $\nicefrac{\partial^2v}{\partial\tangential^2}=0$ on $\partial\Omega$ due to \eqref{eqn:linear:dbc}.
\end{proof}
\begin{lemma}\label{lemma:nonconforming:error_bound}
    Suppose that Assumption~\ref{assumption:mesh} holds and $v \in \Hk{s+1}(\Omega)$, $s \geq 3$, is the solution of \eqref{eqn:linear}--\eqref{eqn:linear:extra_bc}. Then, there exists a positive constant $C$, independent of $h$ and $\varepsilon$, such that the nonconformity error satisfies the following estimate
    \[
        | E(v,\delta_h)| \leq C\varepsilon h^{r-1}\|v\|_{r+1}\|\delta_h\|_{2,h},
    \]
    for all $\delta_h\in\vemSpace{0}$ where $r \coloneqq \min(\polOrder,s)$.
\end{lemma}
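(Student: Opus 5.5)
We start from the representation in Lemma~\ref{lemma:nonconforming:error},
\[
E(v,\delta_h) = \varepsilon \sum_{\edge \in \edgesI}\int_\edge \left( \laplace v -\frac{\partial^2v}{\partial\tangential_\edge^2}\right) \jmp*{\frac{\partial\delta_h}{\partial\normal_\edge}} \ds ,
\]
and use the weak continuity built into $\Hnc{0}(\mesh)$: for every interior edge the jump $\jmp{\partial\delta_h/\partial\normal_\edge}$ is $\Lp(\edge)$-orthogonal to $\poly[\polOrder-2](\edge)$. Write $\sigma \coloneqq \laplace v - \partial^2 v/\partial\tangential_\edge^2 = \hess v\,\normal_\edge\cdot\normal_\edge$, which is a linear combination of second derivatives of $v$ with constant coefficients on each edge. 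On each $\edge\in\edgesI$, pick a neighbouring element $\element$ and subtract the polynomial $q_\edge \coloneqq \hess(\LtwoProjEle[r] v)|_\edge\,\normal_\edge\cdot\normal_\edge$. Its trace lies in $\poly[r-2](\edge)\subset\poly[\polOrder-2](\edge)$ because $r\le\polOrder$. This gives
\[
E(v,\delta_h)=\varepsilon\sum_{\edge\in\edgesI}\int_\edge (\sigma - q_\edge)\,\jmp*{\frac{\partial\delta_h}{\partial\normal_\edge}}\ds .
\]
Equivalently, we may subtract the $\Lp(\edge)$-projection of $\sigma$ onto $\poly[\polOrder-2](\edge)$. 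In addition, the jump may be replaced by $\jmp{\partial\delta_h/\partial\normal_\edge} - c_\edge$ for any $c_\edge\in\poly[0](\edge)$, since $\polOrder\ge2$.

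Next, apply Cauchy--Schwarz on each edge. The first factor is bounded by the edge estimate of Lemma~\ref{lemma:l2proj} with $s=2$, $m=r+1$:
\[
\norm{\sigma-q_\edge}_{0,\edge}\le C\norm{v-\LtwoProjEle[r]v}_{2,\edge}\le C h_\edge^{r-3/2}\seminorm{v}_{r+1,\omega_\edge}.
\]
This requires $v\in\Hk{r+1}$, which holds since $r\le s$.

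For the jump factor, use the zero-mean property. Let $w=\grad\delta_h$ piecewise. Subtract from each side the $\Lp$-projection onto constants over the neighbouring elements, and apply a trace inequality together with a Poincaré inequality on $\element^\pm$. Mesh regularity (Assumption~\ref{assumption:mesh}) guarantees that the scaled trace inequality $\norm{w}_{0,\edge}^2\le C(h_\element^{-1}\norm{w}_{0,\element}^2+h_\element\seminorm{w}_{1,\element}^2)$ holds. This yields
\[
\norm*{\jmp*{\frac{\partial\delta_h}{\partial\normal_\edge}}-c_\edge}_{0,\edge}\le C h_\edge^{1/2}\left(\seminorm{\delta_h}_{2,\element^+}+\seminorm{\delta_h}_{2,\element^-}\right)
\]
for a suitable constant $c_\edge$, namely the difference of the element means of $\partial\delta_h/\partial\normal_\edge$. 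The product of the two edge bounds is $C h^{r-1}\seminorm{v}_{r+1,\omega_\edge}\seminorm{\delta_h}_{2,\omega_\edge}$. Summing over edges, a discrete Cauchy--Schwarz together with the bounded number of edges per element (Assumption~\ref{assumption:mesh}\ref{assumption:mesh:edges}) gives $|E(v,\delta_h)|\le C\varepsilon h^{r-1}\norm{v}_{r+1}\norm{\delta_h}_{2,h}$. The constant $C$ is independent of $h$ and $\varepsilon$, because $\varepsilon$ enters only as the prefactor.

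The main obstacle is handling both polynomial subtractions simultaneously. Subtracting $q_\edge$ uses orthogonality to $\poly[\polOrder-2]$, while subtracting $c_\edge$ uses orthogonality of the jump against the approximation error. These cannot both be applied to the same integrand. The resolution is to use orthogonality only once. Instead of subtracting $c_\edge$, we would write the integrand as $(\sigma-\Pi_\edge\sigma)\jmp{\cdot}$, where $\Pi_\edge$ is the edge $\Lp$-projection onto $\poly[\polOrder-2](\edge)$. Then $\int_\edge(\sigma-\Pi_\edge\sigma)\jmp{\cdot} = \int_\edge(\sigma-\Pi_\edge\sigma)(\jmp{\cdot}-\Pi^0_\edge\jmp{\cdot})$, because $\sigma-\Pi_\edge\sigma\perp\poly[0]$. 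The term $\norm{\sigma-\Pi_\edge\sigma}_{0,\edge}\le\norm{\sigma-q_\edge}_{0,\edge}$ is then estimated as above, and $\norm{\jmp{\cdot}-\Pi^0_\edge\jmp{\cdot}}_{0,\edge}$ is bounded by the per-side trace and Poincaré argument. A remaining point to verify is that the element-wise trace and Poincaré constants for $\grad\delta_h\in\Hk{1}(\element)$ depend only on $\varrho$, which follows from Assumption~\ref{assumption:mesh}\ref{assumption:mesh:star_shaped}.
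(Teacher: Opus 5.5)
Your final argument is correct and is exactly the paper's proof. You use the orthogonality of $\jmp*{\partial\delta_h/\partial\normal_\edge}$ to $\poly[\polOrder-2](\edge)$ to subtract $\LtwoProj{\edge}{\polOrder-2}$ of $\laplace v-\partial^2 v/\partial\tangential_\edge^2$, then orthogonality to constants to subtract $\LtwoProj{\edge}{0}$ of the jump, followed by Cauchy--Schwarz and edge bounds of order $h_\edge^{r-3/2}$ and $h_\edge^{1/2}$. Your per-element trace and Poincar\'e argument for the jump factor is a slightly more careful justification than the paper's direct appeal to Lemma~\ref{lemma:l2proj} on $\omega_\edge$, since the jump is only piecewise $\Hk{1}$.
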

\begin{proof}
    From the definition of the nonconforming spaces we have for every edge $e\in\edgesI$
    \[
    \int_\edge \jmp*{\frac{\partial\delta_h}{\partial\normal_\edge}}p\ds=0, \qquad \text{for all } p\in\poly[\polOrder-2](\edge).
    \]
    Combining this result with properties of the $\Lp$-projection, we have from Lemma~\ref{lemma:l2proj} that
    \begin{align*}
        | E(v,\delta_h)| &\leq \abs*{ \varepsilon \sum_{\edge\in\edgesI}\int_e \left( \frac{\partial^2v}{\partial\tangential^2_\edge}-\laplace v -\LtwoProj{\edge}{\polOrder-2}\left(\frac{\partial^2v}{\partial\tangential^2_\edge}-\laplace v\right) \right) \left(\jmp*{\frac{\partial\delta_h}{\partial\normal_\edge}}-\LtwoProj{\edge}{0}\jmp*{\frac{\partial\delta_h}{\partial\normal_\edge}}\right) \ds} \\
        &\leq \varepsilon \sum_{\edge\in\edgesI} \norm*{\frac{\partial^2v}{\partial\tangential^2_\edge}-\laplace v -\LtwoProj{\edge}{\polOrder-2}\left(\frac{\partial^2v}{\partial\tangential^2_\edge}-\laplace v\right)}_{0,\edge} \norm*{\jmp*{\frac{\partial\delta_h}{\partial\normal_\edge}}-\LtwoProj{\edge}{0}\jmp*{\frac{\partial\delta_h}{\partial\normal_\edge}}}_{0,\edge} \\
        &\leq C \varepsilon \sum_{\edge\in\edgesI} h^{r - \nicefrac{3}{2}}_\edge h^{\nicefrac{1}{2}}_\edge\norm*{\frac{\partial^2v}{\partial\tangential^2_\edge} - \laplace v }_{r-1,\omega_\edge} \seminorm*{\frac{\partial\delta_h}{\partial\normal_\edge}}_{1,\omega_\edge}\\
        &\leq C \varepsilon h^{r - 1}\|v\|_{r+1}|\delta_h|_{2,h}.
    \end{align*}
\end{proof}
We now prove the polynomial consistency error similarly to \citet{cangiani2017conforming}.
\begin{lemma}\label{lemma:polyconsistency}
    Let Assumption~\ref{assumption:mesh} hold and assume $v \in \Hk{s+1}(\Omega)$, $s \geq 3$, is the solution of \eqref{eqn:linear}--\eqref{eqn:linear:extra_bc}; then, 
    \[
        \inf_{p \in \poly(\mesh)} \left(2C_2(\varepsilon)\|v - p\|_{2,h} + \sum_{\element\in\mesh} \sup_{\substack{\delta_h \in \vemSpace{0}\\\delta_h\neq 0}} \frac{|\AL^\element(p,\delta_h) - \ALh^\element(p,\delta_h)|}{\|\delta_h\|_{2,h}} \right) \leq C_4(\varepsilon)h^{r-1}\|v\|_{r+1},
    \]
    where $r \coloneqq \min(\polOrder,s)$, $C_4 = \mathcal{O}\left(\varepsilon^{-1}\right)$ and the positive constant $C$ is independent of $h$ and $\varepsilon$.    
\end{lemma}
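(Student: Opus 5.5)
We choose $p$ elementwise as the $\Lp$-projection, $p|_\element \coloneqq \LtwoProjEle v$. By Lemma~\ref{lemma:l2proj} with $m=r+1\le\polOrder+1$ and $s=2$, summed over elements, this gives
\[
\norm{v-p}_{2,h}\le Ch^{r-1}\norm{v}_{r+1},
\]
so the first term contributes $2C_2(\varepsilon)Ch^{r-1}\norm{v}_{r+1}=\mathcal{O}(\varepsilon^{-1})h^{r-1}\norm{v}_{r+1}$. It remains to bound the local consistency differences $\AL^\element(p,\delta_h)-\ALh^\element(p,\delta_h)$ for $p\in\poly(\element)$ and $\delta_h\in\vemSpace{0}$.

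Since $p\in\poly(\element)$, we have $\valueProj p=p$, so the stabilization term $\StabL(p-\valueProj p,\cdot)$ vanishes. By Lemma~\ref{lemma:proj_eq_l2proj}, $\hessProj p=\hess p\in[\poly[\polOrder-2](\element)]^{2\times2}$ and $\hessProj\delta_h=\LtwoProjEle[\polOrder-2]\hess\delta_h$. Orthogonality then gives
\[
\int_\element\hess p:\hessProj\delta_h\dx=\int_\element\hess p:\hess\delta_h\dx,
\]
so the $\varepsilon$-Hessian parts cancel exactly. The remaining difference is the second-order part:
\[
\int_\element\uEpsCof\grad p\cdot\grad\delta_h\dx-\int_\element\uEpsCof\grad p\cdot\gradProj\delta_h\dx=\int_\element\uEpsCof\grad p\cdot(\grad\delta_h-\LtwoProjEle[\polOrder-1]\grad\delta_h)\dx.
\]
Using orthogonality again, we may subtract the projection of $\uEpsCof\grad p$. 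This yields
\[
\int_\element\bigl(\uEpsCof\grad p-\LtwoProjEle[\polOrder-1](\uEpsCof\grad p)\bigr)\cdot(\grad\delta_h-\LtwoProjEle[\polOrder-1]\grad\delta_h)\dx.
\]

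The main obstacle is bounding this term with the right power of $h$ and a controlled $\varepsilon$-dependence. I would bound the second factor by $Ch_\element\seminorm{\delta_h}_{2,\element}$ using Lemma~\ref{lemma:l2proj} with $m=1$ at zeroth order (gradient). For the first factor I would not try to exploit high regularity of $\uEpsCof$. Instead I would write $\uEpsCof\grad p=\uEpsCof\grad v+\uEpsCof\grad(p-v)$:
\begin{itemize}
\item the piece with $\grad(p-v)$ is bounded by $\norm{\uEpsCof}_{0,\infty}Ch^{r}\norm{v}_{r+1,\element}$;
\item the piece with $\grad v$ is bounded by $Ch_\element^{r-1}\seminorm{\uEpsCof\grad v}_{r-1,\element}$, provided $\uEps$ is smooth enough that $\uEpsCof\in W^{r-1,\infty}$.
\end{itemize}
Alternatively, and more simply, the first factor can be bounded crudely by $C\norm{\uEpsCof}_{0,\infty}\norm{\grad p}_{0,\element}$. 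Combined with the factor $h_\element\seminorm{\delta_h}_{2,\element}$, this gives only $\mathcal{O}(h)$, which is insufficient for $r>2$. So the splitting above is needed, with the gradient error measured at order $r$ (since $r\le\polOrder$), giving an overall $h^{r}\le Ch^{r-1}$ contribution. By Proposition~\ref{prop:ueps_bounds}, $\norm{\uEpsCof}_{0,\infty}=\mathcal{O}(\varepsilon^{-1})$, and this is what fixes $C_4(\varepsilon)=\mathcal{O}(\varepsilon^{-1})$. Any dependence of higher derivatives of $\uEpsCof$ on $\varepsilon$ would have to be absorbed into the generic constant, or hidden in the statement's $C_4$.

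Finally, we sum over $\element$, use Cauchy--Schwarz for the sums over elements, and apply $\seminorm{\delta_h}_{2,h}\le\norm{\delta_h}_{2,h}$ to divide out the test function. Collecting terms gives the bound $C_4(\varepsilon)h^{r-1}\norm{v}_{r+1}$ with $C_4=\mathcal{O}(\varepsilon^{-1})$, dominated by $2C_2(\varepsilon)$ and $\norm{\uEpsCof}_{0,\infty}$.
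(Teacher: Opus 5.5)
Your skeleton is the paper's: take $p$ of $\Lp$-projection type, let the stabilization drop out, and split the difference into a Hessian part ($E_1$) and a $\uEpsCof$-part ($E_2$). The paper takes $p=\valueProj v$. Your choice $\LtwoProjEle v$ is cleaner, since Lemma~\ref{lemma:proj_eq_l2proj} is stated only for virtual element functions.

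You are right that the Hessian part vanishes identically, because $\hess p\in[\poly[\polOrder-2](\element)]^{2\times2}$. The paper bounds $E_1$ instead, which is harmless.

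The real divergence is $E_2$. The paper says it follows ``similarly'' to $E_1$, with only $\norm{\uEpsCof}_{0,\infty,\element}$ appearing. But moving $I-\LtwoProj{\element}{\polOrder-1}$ onto $\uEpsCof\grad p$ yields only that factor if $\uEpsCof$ is constant on $\element$. For the actual cofactor, the crude bound gives $\mathcal{O}(h)$, as you observe. Your double orthogonality followed by the split $\uEpsCof\grad v+\uEpsCof\grad(p-v)$ is the correct treatment.

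The extra power of $h_\element$ you obtain is also not slack. The supremum sits \emph{inside} the sum over elements, so you need $\sup_{\delta_h}(\cdots)\le Ch_\element^{r}a_\element$, with $a_\element\coloneqq\seminorm{\uEpsCof\grad v}_{r-1,\element}+\norm{\uEpsCof}_{0,\infty}h_\element\seminorm{v}_{r+1,\element}$. Then
$\sum_\element h_\element^{r}a_\element\le h^{r-1}\bigl(\sum_\element h_\element^2\bigr)^{1/2}\bigl(\sum_\element a_\element^2\bigr)^{1/2}$,
with $\sum_\element h_\element^2\le C\abs{\Omega}$ by Assumption~\ref{assumption:mesh}\ref{assumption:mesh:star_shaped}. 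A per-element bound of order $h^{r-1}$, as in the paper, would leave $\sum_\element\norm{v}_{r+1,\element}$, which is not controlled by $\norm{v}_{r+1}$.

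The gap is the $\varepsilon$-dependence: the statement requires $C_4=\mathcal{O}(\varepsilon^{-1})$ with $C$ independent of $\varepsilon$. By the Leibniz rule, $h_\element^{r-1}\seminorm{\uEpsCof\grad v}_{r-1,\element}$ involves derivatives of $\uEps$ up to order $r+1$. Proposition~\ref{prop:ueps_bounds} controls only $\norm{\uEps}_{3}$ and $\norm{\uEps}_{2,\infty}$.

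For $\polOrder=2$ this is enough, since
$\seminorm{\uEpsCof\grad v}_{1}\le\norm{\uEpsCof}_{0,\infty}\seminorm{v}_{2}+\seminorm{\uEpsCof}_{1}\norm{\grad v}_{0,\infty}\le C\varepsilon^{-1}\norm{v}_{3}$,
and the lemma follows.

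For $r\ge3$ your argument gives a constant depending on $\norm{\uEpsCof}_{\Wkp{r-1,\infty}}$ (or $\norm{\uEps}_{r+1}$). That constant has unknown $\varepsilon$-dependence and needs regularity of $\uEps$ the lemma does not assume. You flag this, but your closing claim that $C_4$ is governed by $2C_2(\varepsilon)$ and $\norm{\uEpsCof}_{0,\infty}$ is then unsupported. Absorbing these norms into $C$ contradicts $C$ being $\varepsilon$-independent.

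This appears intrinsic to the term rather than a missing trick. Since $(I-\LtwoProj{\element}{\polOrder-1})\grad\delta_h$ is controlled only in $\Lp$, by $h_\element\seminorm{\delta_h}_{2,\element}$, the bound must pass through $\norm{(I-\LtwoProj{\element}{\polOrder-1})(\uEpsCof\grad p)}_{0,\element}$. That quantity measures the smoothness of $\uEpsCof$.

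The paper's $\mathcal{O}(\varepsilon^{-1})$ rests only on the unjustified ``similarly''. For $r\ge3$, your argument proves the $h^{r-1}$ rate with a constant depending on higher norms of $\uEps$. The stated scaling would need $\varepsilon$-explicit bounds on those norms, which the paper does not provide.
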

\begin{proof}
    As $\valueGlobal v \in \poly(\mesh)$, cf. Assumption~\ref{assumption:projections}, we have that
    \begin{multline*}
         \inf_{p \in \poly(\mesh)} \left(2C_2(\varepsilon)\|v - p\|_{2,h} + \sum_{\element\in\mesh} \sup_{\substack{\delta_h \in \vemSpace{0}\\\delta_h\neq 0}} \frac{|\AL^\element(p,\delta_h) - \ALh^\element(p,\delta_h)|}{\|\delta_h\|_{2,h}} \right)\\
         \leq 2C_2(\varepsilon)\|v - \valueGlobal v\|_{2,h} + \sum_{\element\in\mesh} \sup_{\substack{\delta_h \in \vemSpace{0}\\\delta_h\neq 0}} \frac{|\AL^\element(\valueProj v,\delta_h) - \ALh^\element(\valueProj v,\delta_h)|}{\|\delta_h\|_{2,h}}
    \end{multline*}
    From the definitions of $\ALh^\element$ and $\AL^\element$, we can apply Lemma~\ref{lemma:proj_eq_l2proj} to show that
    \begin{align*}
        |\AL^\element(\valueProj v,\delta_h) - \ALh^\element(\valueProj v,\delta_h)| 
        &\leq \abs*{\varepsilon\int_\element \left(\hess(\valueProj v) \hess\delta_h - \hessProj (\valueProj v) \hessProj \delta_h \right)\dx} \\&\quad+ \abs*{\int_\element \uEpsCof \left( \grad(\valueProj v) \grad\delta_h - \gradProj (\valueProj v)\gradProj \delta_h\right)\dx } \\
        &= \abs*{\varepsilon\int_\element \hess(\valueProj v)  \left(\hess\delta_h - \LtwoProj{\element}{\polOrder-2} (\hess \delta_h) \right)\dx }\\&\quad+ \abs*{\int_\element \uEpsCof \grad(\valueProj v)\left(\grad\delta_h - \LtwoProj{\element}{\polOrder-1} (\grad\delta_h)\right)\dx } \\
        & \eqqcolon E_1+E_2
    \end{align*}
    We note that the stabilization terms disappear due to Lemma~\ref{lemma:proj_eq_l2proj}. Using 
    Lemma~\ref{lemma:l2proj}, Lemma~\ref{lemma:proj_eq_l2proj}, and properties of the $\Lp$-projection we have that
\begin{align*}
    E_1 = \abs*{\varepsilon\int_\element \hess(\Pi^K_0v) \left((I -\LtwoProj{\element}{\polOrder-2})\hess\delta_h\right)\dx}
    &= \abs*{\varepsilon\int_\element \left((I -\LtwoProj{\element}{\polOrder-2})\hess(\Pi^K_0v)\right)\hess\delta_h\dx}\\
    &\leq \varepsilon \|(I -\LtwoProj{\element}{\polOrder-2})\hess(\Pi^K_0v)\|_{0,\element}\|\hess\delta_h\|_{0,\element}\\
    &\leq C\varepsilon h^{r-1} \|\hess\valueProj v\|_{r-1,\element}\|\delta_h\|_{2,\element}\\
    &\leq C\varepsilon h^{r-1} \|\valueProj v\|_{r+1,\element}\|\delta_h\|_{2,\element}\\
    &\leq C\varepsilon h^{r-1} \|v\|_{r+1,\element}\|\delta_h\|_{2,\element}.
\end{align*}
Similarly, and by using Proposition~\ref{prop:ueps_bounds}, we have that
\[
   E_2 \leq C h^{r-1} \norm{\uEpsCof}_{0,\infty,\element}\|v\|_{r+1,\element}\|\delta_h\|_{2,\element}
   \leq C h^{r-1} \varepsilon^{-1}\|v\|_{r+1,\element}\|\delta_h\|_{2,\element}  
\]
Combining $E_1$ and $E_2$, and applying Lemmas~\ref{lemma:l2proj} and~\ref{lemma:proj_eq_l2proj} to bound $\|v - \valueGlobal v\|_{2,h}$, noting that $C_2(\varepsilon)=C(\varepsilon + \varepsilon^{-1}) = \mathcal{O}\left(\varepsilon^{-1}\right)$, completes the proof.
\end{proof}
Finally, we bound the last term from \eqref{eqn:strang}.
\begin{lemma}\label{lemma:strang:boundary}
    Let Assumption~\ref{assumption:mesh} hold and assume $v \in \Hk{s+1}(\Omega)$, $s \geq 3$, is the solution of \eqref{eqn:linear}--\eqref{eqn:linear:extra_bc}. Then, there exists a positive constant $C$, independent of $h$ and $\varepsilon$, such that
\[
    \abs*{\varepsilon \sum_{\edge \in \edgesB} \int_\edge \psi \left(\normalProj \delta_h-\frac{\partial \delta_h}{\partial \normal_\edge}\right) \ds} \leq C\varepsilon h^{r-1}\|v\|_{r+1}\|\delta_h\|_{2,h}
\]
for all $\delta_h\in\vemSpace{0}$, where $r \coloneqq \min(\polOrder,s)$.
\end{lemma}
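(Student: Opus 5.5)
The estimate is edge by edge, using the orthogonality built into the edge normal projection in Assumption~\ref{assumption:projections}. There $\normalProj \delta_h$ reproduces the moments of $\partial\delta_h/\partial\normal_\edge$ against $\poly[\polOrder-2](\edge)$. Hence $\normalProj\delta_h - \partial_{\normal_\edge}\delta_h$ is $\Lp[2](\edge)$-orthogonal to $\poly[\polOrder-2](\edge)$. In the boundary term we may therefore replace $\psi$ by $\psi - \LtwoProj{\edge}{\polOrder-2}\psi$ on each $\edge\in\edgesB$.

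For the regularity of $\psi$, I would use the interpretation already used in the nonconformity analysis. Since $v$ solves \eqref{eqn:linear}--\eqref{eqn:linear:extra_bc} with $v\in\Hk{s+1}(\Omega)$, we have $\psi = \laplace v|_{\partial\Omega}$. This is the trace of a function in $\Hk{s-1}(\Omega)$, so the edge estimate of Lemma~\ref{lemma:l2proj} applies. It gives $\norm{\psi - \LtwoProj{\edge}{\polOrder-2}\psi}_{0,\edge} \le C h_\edge^{r-\nicefrac32}\seminorm{\laplace v}_{r-1,\element} \le C h_\edge^{r-\nicefrac32}\norm{v}_{r+1,\element}$, where $\element$ is the element containing $\edge$. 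The second factor is then split as
\[
\normalProj\delta_h - \frac{\partial\delta_h}{\partial\normal_\edge} = \left(\normalProj\delta_h - \normalProj \LtwoProjEle[1]\delta_h\right) - \frac{\partial(\delta_h - \LtwoProjEle[1]\delta_h)}{\partial\normal_\edge}.
\]
This uses polynomial invariance, $\normalProj q = \partial_{\normal_\edge} q$ for $q\in\poly(\element)$. The last term is bounded by the trace inequality together with Lemma~\ref{lemma:l2proj}: $\norm{\partial_{\normal_\edge}(\delta_h-\LtwoProjEle[1]\delta_h)}_{0,\edge}\le C h_\edge^{\nicefrac12}\seminorm{\delta_h}_{2,\element}$.

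The main obstacle is the first difference, which requires stability of $\normalProj$ on the local VEM space. The plan is to show, for $w_h = \delta_h - \LtwoProjEle[1]\delta_h \in \localVem$,
\[
\norm{\normalProj w_h}_{0,\edge}\le C\left(h_\edge^{-\nicefrac12}\seminorm{w_h}_{1,\element} + h_\edge^{\nicefrac12}\seminorm{w_h}_{2,\element}\right),
\]
plus a lower-order $h^{-\nicefrac32}\norm{w_h}_{0,\element}$ term. Two ingredients give this. First, $\normalProj$ is a linear combination of the dofs, and by Assumption~\ref{assumption:monomial_linfty} the coefficients are appropriately scaled. Second, each dof of $w_h$ is bounded by scaled norms, using Corollary~\ref{corol:h2full} for the vertex values and trace inequalities for the edge and normal moments. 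A standard scaling argument on a reference configuration, justified by Assumption~\ref{assumption:mesh}, then gives the bound above. Lemma~\ref{lemma:l2proj} shows each of these terms is $\le C h_\edge^{\nicefrac12}\seminorm{\delta_h}_{2,\element}$.

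Combining the two factors by Cauchy--Schwarz on each boundary edge gives a contribution $C\varepsilon h^{r-1}\norm{v}_{r+1,\element}\seminorm{\delta_h}_{2,\element}$. Summing over $\edgesB$ with a discrete Cauchy--Schwarz inequality, and using Assumption~\ref{assumption:mesh}\ref{assumption:mesh:edges} to bound the number of edges per element, yields the claim with $\seminorm{\delta_h}_{2,h}\le\norm{\delta_h}_{2,h}$.
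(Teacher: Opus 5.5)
Your treatment of the first factor matches the paper. That is: orthogonality of $\normalProj\delta_h-\partial_{\normal_\edge}\delta_h$ to $\poly[\polOrder-2](\edge)$, the identification $\psi=\laplace v|_{\partial\Omega}$, and the bound $\norm{\psi-\LtwoProj{\edge}{\polOrder-2}\psi}_{0,\edge}\le Ch_\edge^{r-\nicefrac{3}{2}}\norm{v}_{r+1,\element}$ from Lemma~\ref{lemma:l2proj}. The routes differ at the second factor. The paper invokes $\normalProj\delta_h=\LtwoProj{\edge}{\polOrder-1}(\partial_{\normal_\edge}\delta_h)$, so that factor becomes an $\Lp$-projection error on $\edge$. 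It is then bounded by $Ch_\edge^{\nicefrac{1}{2}}\seminorm{\delta_h}_{2,\element}$ exactly as in Lemma~\ref{lemma:nonconforming:error_bound}, and no stability of $\normalProj$ is needed. You instead use only moment matching and polynomial reproduction from Assumption~\ref{assumption:projections}, subtract $\LtwoProjEle[1]\delta_h$, and reduce to a scaled stability bound for $\normalProj$ on $\localVem$. That is more robust. Assumption~\ref{assumption:projections} prescribes only the moments of $\normalProj\delta_h$ against $\poly[\polOrder-2](\edge)$, and the degree-$(\polOrder-1)$ normal moment is not a dof. So the identity the paper uses is not a consequence of its stated assumptions, whereas your argument covers any stable construction.

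The price is that the stability bound must actually hold, and your justification does not go through as written. Assumption~\ref{assumption:monomial_linfty} normalizes the polynomials used to \emph{define} the dofs, which is what you need to bound each dof of $w_h$. It says nothing about the coefficients with which $\normalProj$ is assembled from the dofs, and neither does Assumption~\ref{assumption:projections}. To see this, take a dof $\lambda$ with $\lambda(v_h-\valueProj v_h)\not\equiv0$ on $\localVem$; one exists since $\localVem\neq\poly(\element)$. Take a nonzero $q\in\poly[\polOrder-1](\edge)$ orthogonal to $\poly[\polOrder-2](\edge)$, and an arbitrary $c_h$, e.g.\ $c_h=h_\element^{-10}$. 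Replacing $\normalProj v_h$ by $\normalProj v_h+c_h\,\lambda(v_h-\valueProj v_h)\,q$ preserves linearity in the dofs, the moment condition, and polynomial reproduction. Hence no bound $\norm{\normalProj w_h}_{0,\edge}\le C(h_\edge^{-\nicefrac{3}{2}}\norm{w_h}_{0,\element}+h_\edge^{-\nicefrac{1}{2}}\seminorm{w_h}_{1,\element}+h_\edge^{\nicefrac{1}{2}}\seminorm{w_h}_{2,\element})$ with $C$ independent of $h$ can be derived from the stated assumptions. You must either impose this estimate as an explicit hypothesis on $\normalProj$, or verify it for the construction actually used. An example would be a scale-invariant least-squares definition as in \citet{dedner2024framework}, combined with the scaling argument you sketch under Assumption~\ref{assumption:mesh}. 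Granting it, the rest of your proof is correct and yields the lemma: the dof bounds, the trace estimate $\norm{\partial_{\normal_\edge}(\delta_h-\LtwoProjEle[1]\delta_h)}_{0,\edge}\le Ch_\edge^{\nicefrac{1}{2}}\seminorm{\delta_h}_{2,\element}$, and the summation over $\edgesB$.
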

\begin{proof}
We first apply \eqref{eqn:linear:dbc}--\eqref{eqn:linear:extra_bc} similarly to the proof of Lemma~\ref{lemma:nonconforming:error}. Then, from the definition of the edge normal projection from Assumption~\ref{assumption:projections} and the fact that $\normalProj \delta_h = \LtwoProj{\edge}{\polOrder-1}(\nicefrac{\partial \delta_h}{\partial \normal_e})$, we have that
\begin{multline*}
 \abs*{\varepsilon \sum_{\edge \in \edgesB} \int_\edge \psi \left(\normalProj \delta_h-\frac{\partial \delta_h}{\partial \normal_\edge}\right) \ds} \\= \abs*{\varepsilon \sum_{\edge \in \edgesB} \int_\edge \left(\laplace v - \frac{\partial^2v}{\partial\tangential^2_\edge} -\LtwoProj{\edge}{\polOrder-2}\left(\laplace v - \frac{\partial^2v}{\partial\tangential^2_\edge} \right) \right) \left( \LtwoProj{\edge}{\polOrder-1}\left(\frac{\partial \delta_h}{\partial \normal_e}\right)-\frac{\partial \delta_h}{\partial \normal_\edge}\right) \ds}.
\end{multline*}
The proof then follows similarly to Lemma~\ref{lemma:nonconforming:error_bound}.
\end{proof}

We can now combine the above results to derive the following $\Hk{2}$-error bound for the solution of the linearized problem \eqref{eqn:vem:linear}.
\begin{theorem}[Linearized $H^2$-error estimate] \label{theorem:linear:h2}
    Suppose that Assumption~\ref{assumption:mesh} is satisfied. Let $v \in \Hk{s+1}(\Omega)$, $s \geq 3$, be the solution of \eqref{eqn:linear}--\eqref{eqn:linear:extra_bc} and assume that $\varphi \in H^{r-3}(\Omega)$. Let $v_h \in \vemSpace{0}$ be the corresponding virtual element solution to \eqref{eqn:vem:linear}; then, there exists a positive constant $C_5(\varepsilon)$, independent of $h$, such that
    \[
        \|v - v_h\|_{2,h} \leq C_5(\varepsilon) h^{r-1} (\|v\|_{r+1} +\|\varphi\|_{r-3}).
    \]
    where $C_5(\varepsilon) = C(C_3(\varepsilon)\alpha_{*})^{-1}(C_3(\varepsilon)\alpha_{*}+ \varepsilon+ C_4(\varepsilon)) = \mathcal{O}(\varepsilon^{-1})$ and $r \coloneqq \min(\polOrder,s)$.
\end{theorem}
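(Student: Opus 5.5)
The proof will combine the Strang-type estimate of Theorem~\ref{thm:strang} with bounds on each of its right-hand side terms. We divide \eqref{eqn:strang} by $C_3(\varepsilon)\alpha_*$ and then bound the terms in the order in which they appear.

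First, the best-approximation term $\inf_{w_h\in\vemSpace{0}}\norm{v-w_h}_{2,h}$. We choose $w_h=v_I$ from Lemma~\ref{lemma:interpolation} with $m=r+1$ and $s=2$, and sum over elements. This gives $\norm{v-v_I}_{2,h}\le Ch^{r-1}\seminorm{v}_{r+1}$. The factor $(C_2(\varepsilon)+C_3(\varepsilon)\alpha_*)$ multiplying it is then absorbed into the $\varepsilon$-dependent constant. Second, the polynomial consistency term is given directly by Lemma~\ref{lemma:polyconsistency} as $C_4(\varepsilon)h^{r-1}\norm{v}_{r+1}$. Third, the nonconformity term satisfies $\abs{E(v,\delta_h)}/\norm{\delta_h}_{2,h}\le C\varepsilon h^{r-1}\norm{v}_{r+1}$ by Lemma~\ref{lemma:nonconforming:error_bound}. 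Fourth, the boundary term is controlled by $C\varepsilon h^{r-1}\norm{v}_{r+1}$ by Lemma~\ref{lemma:strang:boundary}.

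Adding these bounds gives
\[
C_3(\varepsilon)\alpha_*\norm{v-v_h}_{2,h}\le C\big(C_3(\varepsilon)\alpha_*+\varepsilon+C_4(\varepsilon)\big)h^{r-1}\norm{v}_{r+1},
\]
after absorbing $C_2(\varepsilon)$ into $C_4(\varepsilon)$, which is legitimate since both are $\mathcal{O}(\varepsilon^{-1})$. The data term $\norm{\varphi}_{r-3}$ appears only if the right-hand side $(\varphi,w_h)$ in \eqref{eqn:vem:linear} is replaced by a projected load. The scheme may use a projection of $\varphi$, in the same way that $f_h$ is used in the nonlinear problem. In that case there is an additional load-consistency term $\sup_{\delta_h}\abs{(\varphi-\varphi_h,\delta_h)}/\norm{\delta_h}_{2,h}$. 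If $\varphi_h$ is taken elementwise as $\LtwoProj{\element}{\polOrder-4}\varphi$, this term is bounded by
\[
\sum_\element\norm{(I-\LtwoProj{\element}{\polOrder-4})\varphi}_{0,\element}\norm{(I-\LtwoProj{\element}{0})\delta_h}_{0,\element}\le Ch^{r-3}\norm{\varphi}_{r-3}\,h^2\seminorm{\delta_h}_{2,h},
\]
which again has order $h^{r-1}$. If instead $\varphi$ enters exactly, this term vanishes and the bound holds trivially. Dividing by $C_3(\varepsilon)\alpha_*$ then yields the constant $C_5(\varepsilon)$ as stated. Its order is $\mathcal{O}(\varepsilon^{-1})\cdot\mathcal{O}(\varepsilon^{-1})$ unless $C_3$ is treated as $\mathcal{O}(1)$. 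We will simply track the order exactly as the statement does.

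The main obstacle is the load term: it must be bounded without losing powers of $h$. This needs the moment-preservation $\int_\element(v_h-\valueProj v_h)p\dx=0$ for all $p\in\poly[\polOrder-4](\element)$, which comes from the definition of $\localVem$ in \eqref{eqn:localvem}. For $\polOrder=2,3$ no moments are preserved, so there we plan to use the zero-order Poincaré estimate on $\delta_h\in\Hk{1}_0$ and accept $r-3\le0$. Everything else reduces to citing the earlier lemmas.
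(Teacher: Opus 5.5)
Your proof is correct and is essentially the paper's own argument: bound the best-approximation term with the interpolant of Lemma~\ref{lemma:interpolation}, insert Lemmas~\ref{lemma:nonconforming:error_bound}, \ref{lemma:polyconsistency} and~\ref{lemma:strang:boundary} into Theorem~\ref{thm:strang}, and absorb $C_2(\varepsilon)$ into $C_4(\varepsilon)$ (the paper simply notes $C_4\geq C_2$). Since \eqref{eqn:vem:linear} uses $(\varphi,w_h)$ exactly, the load-consistency digression is unnecessary and $\norm{\varphi}_{r-3}$ is just a harmless extra term on the right. Your hypothetical estimate there is also off, because $\norm{(I-\LtwoProj{\element}{0})\delta_h}_{0,\element}$ only yields $h_\element\seminorm{\delta_h}_{1,\element}$, not $h_\element^2\seminorm{\delta_h}_{2,\element}$. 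Your observation that the displayed formula gives $C_5(\varepsilon)=\mathcal{O}(\varepsilon^{-2})$ rather than $\mathcal{O}(\varepsilon^{-1})$, since $C_3=\mathcal{O}(\varepsilon)$ and $C_4=\mathcal{O}(\varepsilon^{-1})$, is a fair remark about the statement rather than a flaw in your argument.
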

\begin{proof}
    Let $v_I \in \vemSpace{0}$ be the interpolant of $v$ from Lemma~\ref{lemma:interpolation}; then, by standard scaling arguments, we have that 
    \[
        \inf_{w_h\in\vemSpace{0}} \norm{v-w_h}_{2,h} \leq \|v - v_I\|_{2,h} \leq Ch^{r-1} \|v\|_{r+1}.
    \]
    The proof is completed by applying this result and Lemmas~\ref{lemma:nonconforming:error}--\ref{lemma:strang:boundary} to Theorem~\ref{thm:strang}, noting from the proof of Lemma~\ref{lemma:polyconsistency} that $C_4\geq C_2$.
\end{proof}
\section{Analysis of quasilinear Monge--Amp\`ere}\label{sec:nonlinear_error}

In this section, we prove the existence, uniqueness, and optimal error bounds for the solution of the quasilinear vanishing moment VEM formulation \eqref{eqn:vem}.

We first state the following results that are required for the results in this section.
\begin{lemma}[{\citet[Lemmas 3.1 \& 5.4]{beirao2017stability}}]{\label{lemma:h1_bdry}}
    Let $\element\in\mesh$ be such that Assumption~\ref{assumption:mesh}\ref{assumption:mesh:shape_regular} holds; then, for all $v\in \Hk{1}(\element)$,
    \begin{align*}
        |v|_{\nicefrac{1}{2},\partial \element} &\leq |v|_{1,\element} \\
        h^{-1}_\element\|v\|^2_{0,\partial \element} &\leq C(h^{-2}_\element\|v\|^2_{0, \element} + |v|^2_{1, \element}).
    \end{align*}
    where $C>0$ is a constant independent of $h$.
\end{lemma}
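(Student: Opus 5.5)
The second inequality is a scaled trace inequality, and I would prove it directly by the divergence theorem. The first is a scale-invariant trace estimate, which I would get by rescaling to unit size and then using uniform trace and Poincaré constants. Both rely on the geometric control from Assumption~\ref{assumption:mesh}, in particular the star-shapedness \ref{assumption:mesh:star_shaped}. The first inequality will hold with a generic constant $C$, which the statement leaves implicit.

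\emph{Trace inequality.} Let $\x_\element$ be the centre of the ball of radius $\varrho h_\element$ with respect to which $\element$ is star-shaped. Star-shapedness with respect to every point of that ball gives, on each straight edge $\edge\subset\partial\element$,
\[
(\x-\x_\element)\cdot\normal_\edge\geq \varrho h_\element .
\]
For $v\in\Hk{1}(\element)$ (by density it suffices to take $v$ smooth), apply the divergence theorem to the field $v^2(\x-\x_\element)$:
\[
\varrho h_\element\norm{v}_{0,\partial\element}^2\leq\int_{\partial\element}v^2(\x-\x_\element)\cdot\normal\ds=\int_\element\bigl(2v^2+2v\grad v\cdot(\x-\x_\element)\bigr)\dx .
\]
Since $\abs{\x-\x_\element}\leq h_\element$, the right-hand side is at most $2\norm{v}_{0,\element}^2+2h_\element\norm{v}_{0,\element}\seminorm{v}_{1,\element}$. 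Young's inequality then gives $h_\element\norm{v}^2_{0,\partial\element}\leq C(\norm{v}_{0,\element}^2+h_\element^2\seminorm{v}_{1,\element}^2)$. Multiplying by $h_\element^{-2}$ yields the stated bound, with $C$ depending only on $\varrho$.

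\emph{Seminorm estimate.} Map $\element$ to $\hat\element=h_\element^{-1}(\element-\x_\element)$, which has unit diameter. In two dimensions both $\seminorm{\emptyarg}_{\nicefrac{1}{2},\partial\element}$ and $\seminorm{\emptyarg}_{1,\element}$ are invariant under this dilation: the Gagliardo double integral over $\partial\element\times\partial\element$ with kernel $\abs{x-y}^{-2}$ scales as $h^0$, as does the Dirichlet integral. So it suffices to prove $\seminorm{\hat v}_{\nicefrac{1}{2},\partial\hat\element}\leq C\seminorm{\hat v}_{1,\hat\element}$ with $C$ uniform over all admissible $\hat\element$. Subtracting the mean $\bar v$ of $\hat v$ leaves both seminorms unchanged, and the trace theorem gives $\seminorm{\hat v-\bar v}_{\nicefrac{1}{2},\partial\hat\element}\leq C_{\rm tr}\norm{\hat v-\bar v}_{1,\hat\element}$. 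A Poincaré inequality then gives $\norm{\hat v-\bar v}_{0,\hat\element}\leq C_P\seminorm{\hat v}_{1,\hat\element}$. For star-shaped domains the Poincaré constant is uniform, depending only on the chunkiness parameter, via the averaged Taylor polynomial theory of \citet{brenner2008mathematical}.

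The main obstacle is the uniformity of $C_{\rm tr}$ over the family $\{\hat\element\}$. Star-shapedness with respect to a ball of radius $\varrho$ makes $\hat\element$ a Lipschitz domain whose Lipschitz character (number of charts, Lipschitz constants and chart sizes) is controlled by $\varrho$ alone. Assumption~\ref{assumption:mesh}\ref{assumption:mesh:shape_regular} additionally keeps edges from degenerating. I would therefore argue that the standard proof of the trace theorem through a partition of unity and local flattening yields a constant depending only on $\varrho$, which is the argument carried out in \citet{beirao2017stability}. If this becomes too technical, a fallback is to decompose $\hat\element$ into triangles, each shape regular by Assumption~\ref{assumption:mesh}\ref{assumption:mesh:star_shaped}. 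Applying the reference-triangle trace theorem on each triangle, and using that the number of edges is bounded by \ref{assumption:mesh:edges}, would then allow the local $\Hk{\nicefrac{1}{2}}$ seminorms to be assembled.
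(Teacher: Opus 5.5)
The paper gives no proof of this lemma; it imports both inequalities from \citet{beirao2017stability}. Your proposal is therefore a self-contained alternative route, and it is correct in substance.

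The divergence-theorem argument for the scaled trace inequality is complete and elementary. The bound $(\x-\x_\element)\cdot\normal_\edge\geq\varrho h_\element$ is indeed a consequence of star-shapedness with respect to every point of the ball, and you even get the explicit constant $C=3\varrho^{-1}$.

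For the seminorm estimate, the dilation invariance, the mean subtraction and the chunkiness-dependent Poincar\'e constant are all fine. The one nontrivial ingredient, a trace constant uniform over all scaled elements, is asserted rather than proved. A short way to close it, without charts or partitions of unity:
\begin{itemize}
\item In polar coordinates about the ball centre, $\partial\hat\element$ is the graph $r=\rho(\theta)$ with $\varrho\leq\rho\leq 1$, and $\rho$ is Lipschitz with a constant depending only on $\varrho$.
\item Hence $\x\mapsto\rho(\x/\abs{\x})\,\x$ is a bi-Lipschitz map of the unit disc onto $\hat\element$, and of the unit circle onto $\partial\hat\element$, with constants depending only on $\varrho$.
\item Under this map both $\seminorm{\emptyarg}_{1}$ and the Gagliardo seminorm $\seminorm{\emptyarg}_{\nicefrac{1}{2}}$ change only by such factors. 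So the fixed trace-plus-Poincar\'e constant of the disc transfers to $\hat\element$.
\end{itemize}
The citation buys brevity. Your route makes explicit which geometric hypothesis is actually used: only Assumption~\ref{assumption:mesh}\ref{assumption:mesh:star_shaped} enters either inequality.

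\textbf{Hypothesis (a) versus (c).} You silently replace the stated hypothesis (a) by (c); say so explicitly, because (a) alone does not suffice. The triangle with vertices $(0,0)$, $(1,0)$, $(\nicefrac{1}{2},\delta)$ satisfies Assumption~\ref{assumption:mesh}\ref{assumption:mesh:shape_regular} with $\varrho=\nicefrac{1}{2}$ for every $\delta$. Yet as $\delta\to0$, $v\equiv1$ violates the second inequality and $v=x_1$ violates the first. You are also right that the first inequality needs a constant: on the unit square, $v=x_1$ already gives $\seminorm{v}^2_{\nicefrac{1}{2},\partial\element}\geq 2>1=\seminorm{v}^2_{1,\element}$ from the two horizontal edges alone.

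\textbf{The fallback.} It would not work as described, for two reasons. First, the $\Hk{\nicefrac{1}{2}}(\partial\element)$ seminorm is nonlocal, so trace estimates on the triangles $\mathrm{conv}(\{\x_\element\}\cup\edge)$ do not simply add up. The double integrals over pairs of edges sharing a vertex need a separate patch argument. Second, these triangles are shape regular only under (a) and (c) together, not under (c) alone.
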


\begin{lemma}[{\citet[Lemma 3.4]{beirao2017stability}}]{\label{lemma:linfty_bdry}}
    Let $\element\in\mesh$, such that Assumption~\ref{assumption:mesh}\ref{assumption:mesh:shape_regular}\&\ref{assumption:mesh:edges} hold; then, for all $v\in \vemSpace{0}^\element$,
    \begin{align*}
        \|v_h\|^2_{0,\infty,\partial \element} \leq C (h^{-1}_\element\|v_h\|^2_{0,\partial \element} + |v_h|^2_{1/2,\partial \element}).
    \end{align*}
    where $C>0$ is a constant independent of $h$.
\end{lemma}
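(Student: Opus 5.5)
The plan is a reference-element scaling argument that transfers a norm inequality on the boundary of a fixed polygon to $\partial\element$. The statement is an inverse-type bound: the $\Lp[\infty]$-norm of $v_h$ on $\partial\element$ controlled by the scaled $\Lp[2]$ norm plus the $\Hk{1/2}$ seminorm on $\partial\element$. Since $v_h\in\vemSpace{0}^\element$ is $\Cp$ and its trace on each edge $\edge\subset\partial\element$ is a polynomial in $\poly(\edge)$ (by the definition of $\localEnlargedVemSpace$), the trace lies in the finite-dimensional space $\mathbb{B}(\partial\element)$ of continuous piecewise polynomials of degree $\polOrder$ on $\partial\element$. I would therefore argue entirely on the one-dimensional closed curve $\partial\element$, forgetting the interior.

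\textbf{Step 1 (scaling).} Map $\element$ to $\widehat\element$ by $\x\mapsto(\x-\x_\element)/h_\element$, so that $\diam\widehat\element=1$. Under this map the three quantities scale as follows: $\norm{v_h}_{0,\infty,\partial\element}$ is invariant; $\norm{v_h}^2_{0,\partial\element}=h_\element\norm{\hat v}^2_{0,\partial\widehat\element}$; and the Gagliardo seminorm $\seminorm{v_h}_{1/2,\partial\element}$ is scale invariant in one dimension. Hence it suffices to prove
\[
\norm{\hat v}^2_{0,\infty,\partial\widehat\element}\leq C\left(\norm{\hat v}^2_{0,\partial\widehat\element}+\seminorm{\hat v}^2_{1/2,\partial\widehat\element}\right)
\]
for all $\hat v\in\mathbb{B}(\partial\widehat\element)$, with $C$ uniform over the admissible $\widehat\element$.

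\textbf{Step 2 (uniformity over shapes).} The right-hand side is the full $\Hk{1/2}(\partial\widehat\element)$ norm. On a fixed polygon, norm equivalence on the finite-dimensional space $\mathbb{B}$ gives the inequality immediately, but with a shape-dependent constant. The main obstacle is making $C$ independent of $\element$. Here I would use Assumption~\ref{assumption:mesh}\ref{assumption:mesh:shape_regular}\&\ref{assumption:mesh:edges}: the number of edges is at most $N$, and each edge has length at least $\varrho$ after scaling. Parametrise $\partial\widehat\element$ by arclength on a circle of length $\abs{\partial\widehat\element}$, which lies between roughly $3\varrho$ and $N$. This turns $\mathbb{B}$ into continuous piecewise polynomials on a 1D periodic mesh with at most $N$ cells, each of length in $[\varrho,1]$. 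The arclength map is bi-Lipschitz with constant depending only on $\varrho$, since interior angles of star-shaped, shape-regular polygons are bounded away from $0$ and $2\pi$. Therefore the $\Hk{1/2}$ seminorms are equivalent with uniform constants.

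\textbf{Step 3 (conclusion).} The parameter set (the number of cells and the cell lengths) is compact. The best constant in $\norm{\hat v}_{\infty}\leq C\norm{\hat v}_{1/2}$ on such spaces depends continuously on the cell lengths, because composing with a piecewise affine reparametrisation is uniformly bounded. Compactness then yields a uniform $C$. Alternatively, one can avoid compactness by using the 1D argument directly. Pick the edge $\edge$ where $\abs{\hat v}$ attains its maximum, and apply the polynomial inverse estimate $\norm{\hat v}_{0,\infty,\edge}\leq C h_\edge^{-1/2}\norm{\hat v}_{0,\edge}$, which is valid since $\hat v|_\edge\in\poly(\edge)$. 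This directly gives $\norm{v_h}^2_{0,\infty,\partial\element}\leq C h_\edge^{-1}\norm{v_h}^2_{0,\edge}\leq C\varrho^{-1}h_\element^{-1}\norm{v_h}^2_{0,\partial\element}$. This is the stated bound, even without the seminorm term, and it avoids the compactness argument altogether. I would present this inverse-estimate route as the main proof and mention scaling only for the constant of the 1D polynomial inverse inequality on the unit interval.
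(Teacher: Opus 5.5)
The paper does not prove this lemma; it cites \citet[Lemma 3.4]{beirao2017stability}. Your final argument, the edgewise inverse estimate, is correct, and it is a different and much more elementary route than the one behind that citation.

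Since $v_h\in\Hk{2}(\element)\subset\Cp(\overline{\element})$ and $v_h|_\edge\in\poly(\edge)$ on every edge, you can pick an edge $\edge$ on which $\abs{v_h}$ attains its maximum over $\partial\element$. Apply $\norm{p}_{0,\infty,\edge}\le C(\polOrder)h_\edge^{-1/2}\norm{p}_{0,\edge}$ for $p\in\poly(\edge)$, then use $h_\edge\ge\varrho h_\element$. This gives $\norm{v_h}^2_{0,\infty,\partial\element}\le C(\polOrder)\varrho^{-1}h_\element^{-1}\norm{v_h}^2_{0,\partial\element}$. That bound is stronger than the statement, since the seminorm term is not needed. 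It uses only Assumption~\ref{assumption:mesh}\ref{assumption:mesh:shape_regular}, not \ref{assumption:mesh:edges}, and its constant is explicit.

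What the cited approach buys is robustness to short edges. Once Assumption~\ref{assumption:mesh}\ref{assumption:mesh:shape_regular} is dropped, your constant blows up like $h_\element/\min_{\edge\subset\partial\element}h_\edge$. Keeping the $\Hk{1/2}(\partial\element)$ seminorm of the continuous piecewise-polynomial trace is what lets that line of work handle meshes with arbitrarily small edges, typically at the cost of only a logarithmic factor in this ratio. This paper assumes \ref{assumption:mesh:shape_regular} throughout, so it never uses that robustness.

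With your version, the vertex-dof bound in the proof of Lemma~\ref{lemma:fixed_point:contraction} needs only the scaled trace inequality of Lemma~\ref{lemma:h1_bdry}, not the seminorm one.

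I would delete Steps 1--3 rather than keep them as motivation. Step 2 appeals to star-shapedness, which is Assumption~\ref{assumption:mesh}\ref{assumption:mesh:star_shaped} and not a hypothesis of the lemma. The chord--arc equivalence it is used for is also unnecessary, because $\abs{\x-\vect{y}}$ never exceeds the arclength distance, which is the only direction needed. Step 3's continuity of the best constant is asserted rather than proved. Neither step affects the inverse-estimate proof.
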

\begin{lemma}[{\citet[Eqn. 4]{botti2025sobolev}}] \label{lemma:sobolev}
    Let $\element\in\mesh$ and $\polOrder\geq 0$; then, for all $v\in \Hk{\ell}(\Omega)$,
\[
    \|v\|_{0,2,\element} \leq C h^{\polOrder}_\element \|v\|_{\polOrder,2,\element}.
\]
\end{lemma}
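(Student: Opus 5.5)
The plan is a scaling argument to a reference configuration. I take $\|v\|_{\polOrder,2,\element}$ to be the $h_\element$-weighted Sobolev norm used by \citet{botti2025sobolev},
\[
\|v\|_{\polOrder,2,\element} \coloneqq \sum_{j=0}^{\polOrder} h_\element^{\,j-\polOrder}\seminorm{v}_{j,\element}.
\]
With this convention, $h_\element^{\polOrder}\|v\|_{\polOrder,2,\element}=\sum_{j=0}^{\polOrder}h_\element^{j}\seminorm{v}_{j,\element}$. The inequality therefore reduces to the weighted estimate
\[
\|v\|_{0,\element}\le C\sum_{j=0}^{\polOrder}h_\element^{j}\seminorm{v}_{j,\element}.
\]
This is the only reading under which the statement can hold uniformly in $h_\element$. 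With the unweighted norm and $v\equiv 1$ on $\element$, the right-hand side is $C h_\element^{\polOrder}\|1\|_{0,\element}$, which is smaller than the left-hand side once $h_\element$ is small. Since $v\in\Hk{\polOrder}(\Omega)$, its restriction lies in $\Hk{\polOrder}(\element)$, so it suffices to work element by element.

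Step one is the pullback. Fix $\element$ and let $\x_\element$ be the centre of the ball in Assumption~\ref{assumption:mesh}\ref{assumption:mesh:star_shaped}. Define $F(\hat{\x})=\x_\element+h_\element\hat{\x}$, $\hat\element=F^{-1}(\element)$ and $\hat v=v\circ F$. Then $\diam\hat\element=1$, and $\hat\element$ is star-shaped with respect to a ball of radius $\varrho$. Standard change of variables in two dimensions gives
\[
\seminorm{\hat v}_{j,\hat\element}=h_\element^{\,j-1}\seminorm{v}_{j,\element}\qquad\text{for } j=0,\dots,\polOrder.
\]

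Step two is the estimate on $\hat\element$. Trivially $\|\hat v\|_{0,\hat\element}\le\|\hat v\|_{\polOrder,\hat\element}\le\sum_{j=0}^{\polOrder}\seminorm{\hat v}_{j,\hat\element}$, with a constant depending only on $\polOrder$. No embedding constant is involved here, so there is no issue of uniformity over the family of scaled elements.

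Step three scales back. It gives
\[
h_\element^{-1}\|v\|_{0,\element}\le \sum_{j=0}^{\polOrder} h_\element^{\,j-1}\seminorm{v}_{j,\element}.
\]
Multiplying by $h_\element$ yields $\|v\|_{0,\element}\le C\sum_{j=0}^{\polOrder}h_\element^{j}\seminorm{v}_{j,\element}=C h_\element^{\polOrder}\|v\|_{\polOrder,2,\element}$, which is the claim.

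The only delicate point is the norm convention, not the analysis. Once the weighted norm of \citet{botti2025sobolev} is fixed, the proof uses only exact scaling identities and needs no Bramble--Hilbert or Poincar\'e argument. If a sharper variant were wanted, for instance one with the highest seminorm only, I would instead use Lemma~\ref{l3.9} iteratively together with Lemma~\ref{lemma:l2proj}. That variant would require subtracting a polynomial projection, and the stated inequality as worded does not ask for it.
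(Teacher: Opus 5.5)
The paper gives no proof of this lemma; it only cites Botti et al. So there is no argument to compare against. What matters is the statement itself, and your diagnosis of it is correct.

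\textbf{The literal statement is false; yours is a tautology.} With the convention the paper fixes in its introduction, $\|\cdot\|_{k,p,\mathcal{D}}$ is the ordinary Sobolev norm. Then $v\equiv 1$ reduces the claim to $1\le Ch_K^{\polOrder}$, which fails for small $h_K$ whenever $\polOrder\ge 1$, so the literal statement has no proof. Under your weighted norm the inequality is true, but only trivially. The $j=0$ summand of $h_K^{\polOrder}\|v\|_{\polOrder,2,K}=\sum_{j=0}^{\polOrder}h_K^{j}|v|_{j,K}$ is $\|v\|_{0,K}$ itself, so $C=1$. Your three steps (pull back to $\hat K$, bound there, scale back) are an exact round trip that adds nothing. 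You should state explicitly that you have replaced the statement, not proved it.

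\textbf{The norm convention is not the real issue.} Your remark that ``the only delicate point is the norm convention, not the analysis'' understates the problem. The paper uses this lemma precisely to gain powers of $h_K$ with standard norms on the right:
\begin{itemize}
\item in the bound for $Q_2$, $\|\hess\uEps\|_{0,K}\le Ch_K^{r-1}\|\uEps\|_{r+1,K}$, then summed to the global norm $\|\uEps\|_{r+1}$;
\item in the bound for $T_1$, $\|\gamma_h\|_{1,K}\le Ch_K\|\gamma_h\|_{2,K}$;
\item in the bound for $T_2$, $\|z_h\|_{0,K}\le Ch_K^{2}\|z_h\|_{2,K}$.
\end{itemize}
With your norm, $h_K^{\polOrder}\|v\|_{\polOrder,2,K}\ge\|v\|_{0,K}$, so no power of $h_K$ is ever gained and none of these steps follows. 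With the standard norm all three are false: take $\uEps$ quadratic with nonzero Hessian, $\gamma_h$ locally affine, and $z_h$ locally constant.

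\textbf{The version with content.} The meaningful estimate is the one you set aside:
\[
\|v-\mathcal{P}^{K}_{\polOrder-1}v\|_{0,K}\le Ch_K^{\polOrder}|v|_{\polOrder,K}.
\]
This is simply Lemma~\ref{lemma:l2proj} with polynomial degree $\polOrder-1$, $m=\polOrder$ and $s=0$. Lemma~\ref{l3.9} plays no role here. It bounds $|u|_{1,K}$ from above by $\|u\|_{0,K}$ and $|u|_{2,K}$, which is the opposite direction from a Poincar\'e-type estimate. Because this estimate needs a polynomial subtracted, it cannot be applied directly to $\hess\uEps$, $\gamma_h$ or $z_h$ the way the paper does. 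The difficulty in those arguments is therefore analytic, not notational.
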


\subsection{Fixed point}
In order to prove the existence and uniqueness of the solution, we study the linear operator $T_h$ defined by \eqref{eqn:T} as a fixed point of this operator satisfies \eqref{eqn:vem}. From Theorem~\ref{thm:existence:linear} and Theorem~\ref{theorem:linear:h2} we can see that $T_h$ is well defined; therefore, we need to prove that a fixed point exists and is unique, and hence the unique fixed point is then a unique solution of \eqref{eqn:vem}.  We show this by proving that $T_h$ satisfies the Banach fixed point theorem and, hence, that there exists a unique fixed point in a small neighborhood of $\uEpsInterp$ given by the ball
\begin{equation*}
    \ball{\uEpsInterp}{\zeta} \coloneqq \left\{v_h \in \vemSpace{g} : \|v_h - \uEpsInterp\|_{2,h} \leq \zeta \right\}
\end{equation*}
 of radius $\zeta := \zeta(h,\varepsilon) > 0$ and centre $\uEpsInterp \in \vemSpace{g}$, where $\uEpsInterp$ is the interpolant of the solution $\uEps$ of \eqref{eqn:vanishing_moment}--\eqref{eqn:vanishing_moment:extra_bc} in the VEM space $\vemSpace{g}$ defined by Lemma~\ref{lemma:interpolation}.

 We first state the following result about the mapping $T_h$.
\begin{lemma}\label{lemma:fixed_point:operator_bound}
Let $\uEps \in \Hk{s+1}(\Omega)$, $s\geq 3$, be the solution of \eqref{eqn:vanishing_moment}--\eqref{eqn:vanishing_moment:extra_bc} with interpolant $\uEpsInterp \in \vemSpace{g}$ defined by Lemma~\ref{lemma:interpolation} and $T_h$ the operator defined by \eqref{eqn:T}; then, there exists a positive constant $C_6(\varepsilon) = C(1+\varepsilon^{\nicefrac{-3}{2}}) = \mathcal{O}(\varepsilon^{\nicefrac{-3}{2}})$, independent of $h$, such that
\begin{align*}
        \norm{\uEpsInterp - T_h(\uEpsInterp)}_{2,h} \leq Ch^{r-1}(C_6(\varepsilon)\norm{\uEps}_{r+1}+\norm{f}_{r-3}),
\end{align*}
where $r \coloneqq \min(\polOrder,s)$.
\end{lemma}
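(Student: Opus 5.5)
Set $\delta_h \coloneqq \uEpsInterp - T_h(\uEpsInterp)$. Since $T_h(\uEpsInterp)-\uEpsInterp$ is the solution in $\vemSpace{0}$ of \eqref{eqn:T}, we have $\delta_h\in\vemSpace{0}$. The plan is to test \eqref{eqn:T} with $w_h=\delta_h$ and then use coercivity of the discrete linear form. From \eqref{eqn:linear:discrete:coercivity} this gives
\[
C_3^*(\varepsilon)\norm{\delta_h}_{2,h}^2 \leq \ALh(\delta_h,\delta_h) = -\Big(\AQLh(\uEpsInterp,\delta_h) - \sum_{\element\in\mesh}\int_\element f_h\delta_h\dx - \varepsilon\sum_{\edge\in\edgesB}\int_\edge\Big(\frac{\partial^2 g}{\partial\tangential_\edge^2}-\varepsilon\Big)\normalProj\delta_h\ds\Big).
\]
The bracket is the consistency residual of the discrete nonlinear scheme evaluated at the interpolant. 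To bound it, I would insert the continuous solution. Because $\uEps$ is smooth, testing \eqref{eqn:vanishing_moment} against the nonconforming $\delta_h$ and integrating by parts elementwise, exactly as in Lemma~\ref{lemma:nonconforming:error}, gives
\[
\AQL(\uEps,\delta_h) = (f,\delta_h) + \varepsilon\sum_{\edge\in\edgesB}\int_\edge\Big(\frac{\partial^2 g}{\partial\tangential_\edge^2}-\varepsilon\Big)\frac{\partial\delta_h}{\partial\normal_\edge}\ds - E_{QL}(\uEps,\delta_h),
\]
where $E_{QL}$ is an interior edge jump term of the same form as in Lemma~\ref{lemma:nonconforming:error}. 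Here the boundary condition \eqref{eqn:vanishing_moment:extra_bc} replaces \eqref{eqn:linear:extra_bc}, and the $\det$ term produces no edge contributions since $\delta_h\in\Hk{1}_0(\Omega)$. Subtracting this identity from the residual splits it into four pieces:
\begin{enumerate}
\item[(i)] $\aQLh(\uEpsInterp,\delta_h)-\aQL(\uEps,\delta_h)$;
\item[(ii)] $\bQLh(\uEpsInterp,\delta_h)-\bQL(\uEps,\delta_h)$;
\item[(iii)] the data error $(f-f_h,\delta_h)$ together with the boundary difference $\normalProj\delta_h-\partial_{\normal}\delta_h$;
\item[(iv)] the nonconformity term $E_{QL}$.
\end{enumerate}

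Terms (iii) and (iv) follow Lemmas~\ref{lemma:nonconforming:error_bound} and~\ref{lemma:strang:boundary} verbatim with $v$ replaced by $\uEps$, giving $C\varepsilon h^{r-1}\norm{\uEps}_{r+1}\norm{\delta_h}_{2,h}$. The $f$ part of (iii) gives $Ch^{r-1}\norm{f}_{r-3}\norm{\delta_h}_{2,h}$, using $f_h$ as an $\Lp$-projection of degree $\polOrder-4$ and orthogonality against $\delta_h-\LtwoProj{\element}{\polOrder-4}\delta_h$, which gains $h^2$ via Lemma~\ref{lemma:l2proj}.

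For term (i), I would split $\hess\uEps:\hess\delta_h - \hessProj\uEpsInterp:\hessProj\delta_h$ into $\hessProj(\uEpsInterp-\uEps)$ parts and $(I-\LtwoProj{\element}{\polOrder-2})$ parts, as in Lemma~\ref{lemma:polyconsistency}. The stabilization is handled by writing $\uEpsInterp-\valueProj\uEpsInterp$ and using \eqref{eqn:stab:linear:equivalence} with Lemma~\ref{lemma:interpolation}. Together these give $C\varepsilon h^{r-1}\norm{\uEps}_{r+1}\norm{\delta_h}_{2,h}$.

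The main obstacle is term (ii), the determinant term. I would write
\[
\det(\hessProj\uEpsInterp)\valueProj\delta_h - \det(\hess\uEps)\delta_h = \big(\det(\hessProj\uEpsInterp)-\det(\hess\uEps)\big)\valueProj\delta_h + \det(\hess\uEps)\big(\valueProj\delta_h-\delta_h\big).
\]
In the first piece I would apply Proposition~\ref{prop:mean_value}: the difference equals $\cof(\Phi_t):(\hessProj\uEpsInterp-\hess\uEps)$ with $\Phi_t$ a convex combination of the two Hessians. This is bounded by $\norm{\cof(\Phi_t)}_{0,\infty}\norm{\hessProj\uEpsInterp-\hess\uEps}_{0}\norm{\valueProj\delta_h}_{0}$. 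The $\Lp[\infty]$ bound on $\Phi_t$ comes from Proposition~\ref{prop:ueps_bounds}, together with an $\Lp[\infty]$ stability estimate for $\hessProj\uEpsInterp$ obtained by inverse estimates and Corollary~\ref{corol:h2full}; this produces the factor $\varepsilon^{-1}$. The second piece uses $\norm{\det\hess\uEps}_0\le\norm{\uEpsCof}_{0,\infty}\norm{\uEps}_2 = \mathcal{O}(\varepsilon^{-3/2})$ and $\norm{\delta_h-\valueProj\delta_h}_0\le Ch^2\norm{\delta_h}_{2,h}$, absorbing $h^2\le Ch^{r-1}$ is not valid in general, so instead I would subtract $\LtwoProj{\element}{\polOrder-2}\det(\hess\uEps)$ to gain $h^{r-1}$ with $\norm{\det\hess\uEps}_{r-1}$ bounded in terms of $\norm{\uEps}_{r+1}$ times $\varepsilon^{-3/2}$-type factors. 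This is the source of $C_6(\varepsilon)=\mathcal{O}(\varepsilon^{-3/2})$. The $\StabQLb$ stabilization is bounded similarly, using $\norm{\cof(\hessProj[\element,0]\uEpsInterp)}_{0,\infty}\lesssim\varepsilon^{-1}$.

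Collecting all pieces and dividing by $C_3^*(\varepsilon)\norm{\delta_h}_{2,h}$ gives the claim. The factor $(C_3^*)^{-1}$ is absorbed into the generic constant $C$.
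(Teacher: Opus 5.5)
Your overall route is the paper's: test \eqref{eqn:T} at $\uEpsInterp$, subtract the consistency identity for $\uEps$, bound the pieces, and divide by the coercivity constant. Your (i)--(iv) simply regroup the paper's $Q_1$--$Q_6$. You differ on the determinant term. You keep $\det(\hess\uEps)(\valueProj\delta_h-\delta_h)$ as one piece and use that $\delta_h-\valueProj\delta_h$ is $\Lp$-orthogonal to $\poly(\element)$. The paper instead bounds $\int_\element\det(\hess\uEps)w_h\dx$ and $\int_\element\det(\hess\uEps)\valueProj w_h\dx$ separately, and neither of those is small in $h$ on its own. In your splitting the factor $h^{r-1}$ genuinely comes from a cancellation. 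The $h^{r-1}$ rate therefore comes out fine.

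The genuine gap is the $\varepsilon$-dependence, which is part of the statement. Since $C_3^*(\varepsilon)=\mathcal{O}(\varepsilon)$, your last step cannot absorb $(C_3^*)^{-1}$ into an $\varepsilon$-independent $C$; it costs a full factor $\varepsilon^{-1}$. This is exactly how the paper turns $Q_2=\mathcal{O}(\varepsilon^{-1/2})$ into $C_6=\mathcal{O}(\varepsilon^{-3/2})$, with all its other terms $\mathcal{O}(\varepsilon)$ or $\varepsilon$-free. So the stated $C_6$ needs a residual of size $\mathcal{O}(\varepsilon^{-1/2})h^{r-1}$, and your bounds are larger:
\begin{itemize}
\item your mean-value piece uses $\norm{\cof(\Phi_t)}_{0,\infty}\sim\norm{\uEpsCof}_{0,\infty}=\mathcal{O}(\varepsilon^{-1})$;
\item your orthogonality piece is claimed to be $\mathcal{O}(\varepsilon^{-3/2})$.
\end{itemize}
After division you end at $\mathcal{O}(\varepsilon^{-5/2})$, and at best $\mathcal{O}(\varepsilon^{-2})$ from the mean-value piece alone.

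For the mean-value piece, reverse the pairing as the paper does. Put $\cof(\Phi_t)$ and $\hessProj\uEpsInterp-\hess\uEps$ in $\Lp$, with $\norm{\cof(\Phi_t)}_{0}\lesssim\norm{\uEps}_{2}+\norm{\uEps-\uEpsInterp}_{2,h}=\mathcal{O}(\varepsilon^{-1/2})$ by Proposition~\ref{prop:ueps_bounds}, and put $\valueProj\delta_h$ in $L^\infty$. This requires an $h$-uniform bound $\max_{\element}\norm{\valueProj\delta_h}_{0,\infty,\element}\le C\norm{\delta_h}_{2,h}$, i.e.\ a discrete Sobolev embedding for $\vemSpace{0}$. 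An elementwise inverse estimate such as Corollary~\ref{corol:h2full} alone loses $h^{-1}$. This pairing also removes your need for an $L^\infty$ bound on $\hessProj\uEpsInterp$; Corollary~\ref{corol:h2full} applies to functions in $\localVem$, not to their Hessian projections.

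For the orthogonality piece, use the equation itself: $\det(\hess\uEps)=f+\varepsilon\biharmonic\uEps$. Subtract $\LtwoProj{\element}{\polOrder}\det(\hess\uEps)$ and use $\norm{\delta_h-\valueProj\delta_h}_{0,\element}\le Ch_\element^2\seminorm{\delta_h}_{2,\element}$. For $r\ge 3$ this gives $Ch^{r-1}(\norm{f}_{r-3}+\varepsilon\norm{\uEps}_{r+1})\norm{\delta_h}_{2,h}$, with no negative power of $\varepsilon$.

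Separately, your data term fails for small $\polOrder$. With $f_h=\LtwoProj{\element}{\polOrder-4}f$, the orthogonality gains $h^2$ only when $\polOrder\ge 5$; for $\polOrder\le 3$ the space $\poly[\polOrder-4](\element)$ is trivial, so $f_h=0$. Take $f_h=\LtwoProj{\element}{\polOrder}f$ as the paper does. It is computable because $\int_\element f_h w_h\dx=\int_\element f_h\valueProj w_h\dx$.
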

\begin{proof}
    From \eqref{eqn:T} and \eqref{eqn:AQLh} we have, for all $w_h \in \vemSpace{0}$,
\begin{align*}
\ALh(\uEpsInterp &- T_h(\uEpsInterp),w_h) \\&
\leq \abs*{\AQLh(\uEpsInterp,w_h) - \AQL(\uEps,w_h)}
    +  \abs*{\sum_{\element\in\mesh}\int_\element(f-f_h)w_h\dx} \\
    & \quad + \abs*{\AQL(\uEps,w_h)- \sum_{\element\in\mesh}\int_\element fw_h\dx
    - \varepsilon \sum_{\edge\in\edgesB} \int_\edge \left(\frac{\partial^2 g}{\partial \tangential^2_\edge}
    - \varepsilon, \normalProj w_h\right) \ds}\\
    & \quad +\abs*{\varepsilon \sum_{\edge\in\edgesB} \int_\edge \left(\frac{\partial^2 g}{\partial \tangential^2_\edge}
    - \varepsilon, \normalProj w_h\right) \ds}
\end{align*}
Expanding $\AQL$ and $\AQLh$ gives
\begin{align*}
\ALh(\uEpsInterp &- T_h(\uEpsInterp),w_h) \\&\leq \left|\sum_{\element\in\mesh}\varepsilon\int_\element\left(\hessProj \uEpsInterp:\hessProj w_h-\hess\uEps:\hess w_h\right)\dx\right| \\
    & \quad +\left|\sum_{\element\in\mesh}\int_\element \left(\det(\hess\uEps)w_h - \det(\hessProj \uEpsInterp) \valueProj w_h\right) \dx\right| \\
    & \quad + \abs*{\sum_{\element\in\mesh}\int_\element(f-f_h)w_h\dx} + \left|\sum_{\element\in\mesh}\StabQL(\uEpsInterp - \valueProj \uEpsInterp, w_h - \valueProj w_h)\right| \\
    & \quad + \left|\AQL(\uEps,w_h)- \sum_{\element\in\mesh}\int_\element fw_h\dx- \varepsilon \sum_{\edge\in\edgesB} \int_\edge \left(\frac{\partial^2 g}{\partial \tangential^2_\edge} - \varepsilon, \dfrac{\partial w_h}{\partial \normal_\edge}\right) \ds\right|\\
     & \quad + \left|\varepsilon \sum_{\edge\in\edgesB} \int_\edge \left(\frac{\partial^2 g}{\partial \tangential^2_\edge} - \varepsilon, \dfrac{\partial w_h}{\partial \normal_\edge} - \normalProj w_h\right) \ds \right|\\
    &\eqqcolon Q_1+Q_2+Q_3+Q_4+Q_5+Q_6.
\end{align*}
We proceed by bounding these six terms individually. By applying Lemma~\ref{lemma:proj_eq_l2proj}, the definition of the $\Lp$-projection, and Lemma~\ref{lemma:l2proj}, we have that
\begin{align*}
    Q_1 & = \abs*{\sum_{\element\in\mesh} \varepsilon \int_\element \left( \hess\uEps - \LtwoProj{\element}{\polOrder-2}\hess\uEpsInterp\right):\hess w_h\dx} \\
    & \leq \varepsilon\sum_{\element\in\mesh} \|\hess\uEps - \LtwoProj{\element}{l-2}\hess\uEpsInterp\|_{0,\element} \|\hess w_h\|_{0,\element} \\
    &\leq \varepsilon \sum_{\element\in\mesh} \left(\|\hess\uEps - \LtwoProj{\element}{l-2}\hess\uEps \|_{0,\element} + \|\LtwoProj{\element}{l-2}(\hess\uEps -\hess\uEpsInterp)\|_{0,\element} \right)\|\hess w_h\|_{0,\element} \\
    &\leq \varepsilon \sum_{\element\in\mesh} (Ch^{r-1}_\element\|\hess\uEps\|_{r-1,\element} +\|\hess\uEps -\hess\uEpsInterp\|_{0,\element})\|w_h\|_{2,\element}\\
    &\leq \varepsilon \sum_{\element\in\mesh} (Ch^{r-1}_\element\|\uEps\|_{r+1,\element} +\|\uEps -\uEpsInterp\|_{2,\element})\|w_h\|_{2,\element}.
\end{align*} 
Thus, by applying Lemma~\ref{lemma:interpolation}, we follows that 
\begin{equation}\label{eqn:fixed_point:operator_bound:e1}
Q_1 \leq C\varepsilon h^{r-1}\|\uEps\|_{r+1}\|w_h\|_{2,h}.
\end{equation}

For $Q_2$ we apply the triangle inequality to see that
\begin{align*}
Q_2 &\leq \left|\sum_{\element\in\mesh}\int_\element \det(\hess \uEps)w_h\dx\right| + \left|\sum_{\element\in\mesh}\int_\element \det(\hess \uEps)\valueProj w_h\dx\right|  \\
&\quad + \left|\sum_{\element\in\mesh}\int_\element (\det(\hess \uEps)-\det(\hessProj \uEps))\valueProj w_h\dx \right|\\
&\quad + \left|\sum_{\element\in\mesh} \int_\element (\det(\hessProj \uEps) -\det(\hessProj \uEpsInterp))\valueProj w_h\dx\right|.
\end{align*}
Using the fact that  $\cof (\hess \uEps):(\hess  \uEps) \geq \operatorname{det}(\hess  \uEps)$, we can bound the first term by applying
H\"older's inequality, Proposition~\ref{prop:ueps_bounds},  Corollary~\ref{corol:h2full}, Lemma~\ref{lemma:sobolev} and \eqref{eqn:poincare}
\begin{align}
\left|\sum_{\element\in\mesh}\int_\element \det(\hess \uEps)w_h\dx\right|
    &\leq \sum_{\element\in\mesh} \|\cof(\hess \uEps)\|_{0,2,\element}\|\hess \uEps\|_{0,\element}\|w_h\|_{0,\infty,\element}\nonumber\\
    &\leq C \varepsilon^{\nicefrac{-1}{2}}\sum_{\element\in\mesh}h_\element^{r-1}\|\uEps\|_{r+1,\element}h_\element^{-1}\|w_h\|_{0,\element} \nonumber\\
    &\leq C \varepsilon^{\nicefrac{-1}{2}}h^{r-1}\|\uEps\|_{r+1}|w_h|_{2,h}. \label{eqn:fixed_point:operator_bound:e2:t1}
\end{align}
for the second term of $Q_2$ the bound
\[
\left|\sum_{\element\in\mesh}\int_\element \det(\hess \uEps)\valueProj w_h\dx\right| \leq C \varepsilon^{\nicefrac{-1}{2}}h^{r-1}\|\uEps\|_{r+1}|w_h|_{2,h}
\]
follows almost identically with an application of Corollary~\ref{corol:proj_eq_l2proj}. By applying Proposition~\ref{prop:mean_value} we know there exists a $t\in[0,1]$ such that
\begin{multline*}
    \left|\sum_{\element\in\mesh}\int_\element (\det(\hess \uEps)-\det(\hessProj \uEps))\valueProj w_h\dx\right| \\
    \leq \sum_{\element\in\mesh}  \|\cof((1-t)\hess \uEps +t \hessProj \uEps)) \|_{0,\element} \|\hess \uEps - \hessProj \uEps\|_{0,\element} \|\valueProj  w_h\|_{0,\infty,K}.
\end{multline*}
By linearity, the triangle inequality, Proposition~\ref{prop:ueps_bounds}, and standard inverse inequalities on polynomials, we can show that
\begin{multline*}
    \left|\sum_{\element\in\mesh}\int_\element (\det(\hess \uEps)-\det(\hessProj \uEps))\valueProj w_h\dx\right|
    \\\leq \sum_{\element\in\mesh} \|\uEps \|_{2,\element} \|(I- \LtwoProj{\element}{\polOrder-2})\hess \uEps\|_{0,\element} h^{-1}_K\|\valueProj w_h\|_{0,\element}.
\end{multline*}
Finally, by applying Proposition~\ref{prop:ueps_bounds}, Lemma~\ref{lemma:l2proj}, Lemma~\ref{lemma:sobolev}, and \eqref{eqn:poincare} we have that
\[
    \left|\sum_{\element\in\mesh}\int_\element (\det(\hess \uEps)-\det(\hessProj \uEps))\valueProj w_h\dx\right|
    \leq C\varepsilon^{-\nicefrac{1}{2}}h^{r-1}\|\uEps\|_{r+1}|w_h|_{2,h}.
\]
We can bound the last term similarly by applying Lemma~\ref{lemma:interpolation}, instead of Lemma~\ref{lemma:l2proj}, to show that
\begin{align*}
    &\left|\sum_{\element\in\mesh}(\det(\hessProj \uEps)-\det(\hessProj \uEpsInterp),\valueProj w_h)_K\right|\\
    &\qquad\leq \sum_{\element\in\mesh}  \|\cof((1-t)\hessProj \uEps + t \hessProj\uEpsInterp)\|_{0,\element} \|\hessProj (\uEps -\uEpsInterp)\|_{0,\element} \|\valueProj  w_h\|_{0,\infty,\element}  \\
    &\qquad\leq \sum_{\element\in\mesh}  \|\uEps \|_{2,\element} \|\uEps -\uEpsInterp\|_{2,\element} h^{-1}_K\|\valueProj  w_h\|_{0,\element} \\
    &\qquad\leq C\varepsilon^{-\nicefrac{1}{2}}h^{r-1}\|\uEps\|_{r+1}|w_h|_{2,h}.
\end{align*}
Combining the above results we get that
\begin{equation}\label{eqn:fixed_point:operator_bound:e2}
    Q_2 \leq C\varepsilon^{\nicefrac{-1}{2}} h^{r-1}\|\uEps\|_{r+1}|w_h|_{2,h}.
\end{equation}

From Lemma~\ref{lemma:l2proj} and properties of the $\Lp$-projection,
\begin{align}
    Q_3
    = \abs*{\sum_{\element\in\mesh}\int_\element(f-\LtwoProj{\element}{\polOrder}f)w_h\dx}
    &= \abs*{\sum_{\element\in\mesh} \int_\element(f-\LtwoProj{\element}{\polOrder}f)(w_h - \LtwoProj{\element}{1} w_h)\dx}\notag\\
    &\leq \sum_{\element\in\mesh} \|f-\LtwoProj{\element}{\polOrder}f\|_{0,\element}\|w_h - \LtwoProj{\element}{1}w_h\|_{0,\element}\notag\\
    &\leq C\sum_{\element\in\mesh}h^{r-3}_\element\|\varphi\|_{r-3}h^2_K|w_h|_{2,\element}\notag\\
    &\leq Ch^{r-1}\|f\|_{r-3}\seminorm{w_h}_{2,h}. \label{eqn:fixed_point:operator_bound:e3}
\end{align}

We now bound $Q_4$ by \eqref{eqn:stab:linear:equivalence}, Lemma~\ref{lemma:proj_eq_l2proj} and Lemma~\ref{lemma:l2proj},
\begin{align*}
    Q_4 &= \left|\sum_{\element\in\mesh}\StabQL(\uEpsInterp - \valueProj  \uEpsInterp, w_h - \valueProj  w_h)\right| \\
    &\leq \sum_{\element\in\mesh}\abs*{\StabQL(\uEpsInterp - \valueProj  \uEpsInterp, \uEpsInterp - \valueProj  \uEpsInterp)}^{\nicefrac{1}{2}} \abs*{\StabQL(w_h - \valueProj  w_h, w_h - \valueProj  w_h)}^{\nicefrac{1}{2}} \\
    &\leq C\varepsilon \sum_{\element\in\mesh}\|\uEpsInterp - \valueProj  \uEpsInterp\|_{2,\element}\|w_h - \valueProj  w_h\|_{2,\element} \\
    &\leq C\varepsilon \sum_{\element\in\mesh}\|\uEpsInterp - \valueProj  \uEpsInterp\|_{2,\element}|w_h|_{2,\element}.
\end{align*}
Furthermore, by Lemma~\ref{lemma:l2proj}, Lemma~\ref{lemma:interpolation}, and standard inverse estimates for polynomials, for all $\element\in\mesh$ we have that
\begin{align*}
    \|\uEpsInterp - \valueProj  \uEpsInterp\|_{2,\element} & \leq\|\uEpsInterp -\uEps\|_{2,\element} + \|\uEps - \valueProj  \uEps\|_{2,\element} + \|\valueProj (\uEps -  \uEpsInterp)\|_{2,\element} \\
    & \leq\|\uEpsInterp -\uEps\|_{2,\element} + \|\uEps - \valueProj  \uEps\|_{2,\element} + C h_\element^{-2}\|\valueProj (\uEps -  \uEpsInterp)\|_{0,\element} \\
    &\leq C(2h^{r-1}_\element\|\uEps\|_{r+1,\element} + h^{-2}_\element h^{r+1}_\element\|\uEps\|_{r+1,\element}).
\end{align*}
Therefore,
\begin{equation} \label{eqn:fixed_point:operator_bound:e4}
    Q_4 \leq C\varepsilon h^{r-1}\|\uEps\|_{r+1}|w_h|_{2,h}.
\end{equation}

Assuming $\uEps \in H^4(\Omega)$, then by \eqref{eqn:vanishing_moment}--\eqref{eqn:vanishing_moment:extra_bc}, and following similar steps to the proof of Lemma~\ref{lemma:nonconforming:error} we can show that
\begin{align*}
    \int_\element fw_h \dx &= \varepsilon \int_\element \sum_{i,j = 1}^2 \frac{\partial}{\partial x_j}\left(\dfrac{\partial^2 \uEps}{\partial x_i \partial x_j}\right)\dfrac{\partial w_h}{\partial x_i} \dx- \varepsilon \int_{\partial\element} \sum_{i,j = 1}^2 \left(\dfrac{\partial }{\partial x_j}\left(\dfrac{\partial^2 \uEps}{\partial x_i \partial x_j}\right)\right)w_h \eta_i \ds\\
    &\qquad+ \int_\element \det(\hess  \uEps)w_h \dx,
\end{align*}
and hence,
\begin{align*}
    Q_5 = \Bigg|\sum_{\element\in\mesh} &\varepsilon \int_{\partial \element} \sum_{i,j = 1}^2 \left(\dfrac{\partial }{\partial x_j}\left(\dfrac{\partial^2 \uEps}{\partial x_i \partial x_j}\right)\right)w_h\eta_i \dx \\&- \sum_{\element\in\mesh}\varepsilon \int_{\partial \element} \sum_{i,j = 1}^2 \left(\dfrac{\partial^2 \uEps}{\partial x_i \partial x_j}\right)\dfrac{\partial w_h}{\partial x_i}\eta_j \dx - \varepsilon \sum_{\edge \in \edgesB} \int_\edge \left(\frac{\partial^2 g}{\partial \tangential^2_\edge} - \varepsilon, \dfrac{\partial w_h}{\partial \normal_e}\right) \ds\Bigg|
\end{align*}
Following similar steps to the proof of Lemma~\ref{lemma:nonconforming:error} we deduce that
\begin{align*}
    Q_5 &\leq \abs*{\sum_{\edge\in\edgesB} \varepsilon\int_\edge  \left(\dfrac{\partial^2 \uEps}{\partial \tangential_\edge^2} - \laplace \uEps\right) \dfrac{\partial w_h}{\partial \normal_\edge} \ds - \varepsilon \sum_{\edge\in\edgesB} \int_\edge \left(\frac{\partial^2 g}{\partial \tangential^2_\edge} - \varepsilon\right) \dfrac{\partial w_h}{\partial \normal_\edge}\ds} \\
    &\quad+ \abs*{\sum_{\edge\in\edgesB}\varepsilon \int_\edge\dfrac{\partial \laplace \uEps}{\partial \normal_\edge} w_h \ds}
    + \abs*{\sum_{\edge\in\edgesI}\varepsilon \int_\edge\dfrac{\partial \laplace \uEps}{\partial \normal_\edge} \jmp{w_h} \ds}
    +\abs*{\sum_{\edge\in\edgesB}\varepsilon\int_\edge \dfrac{\partial ^2 \uEps}{\partial \normal_\edge\partial \tangential_\edge} \dfrac{\partial w_h}{\partial \tangential_\edge} \ds} \\
    &\quad
    + \abs*{\sum_{\edge\in\edgesI}\varepsilon\int_\edge \dfrac{\partial^2 \uEps}{\partial \normal_\edge\partial \tangential_\edge} \jmp*{\dfrac{\partial w_h}{\partial \tangential_\edge}} \ds}
    +\abs*{\sum_{\edge\in\edgesI} \varepsilon\int_\edge  \left(\dfrac{\partial^2 \uEps}{\partial \tangential_\edge^2} - \laplace \uEps\right) \jmp*{\dfrac{\partial w_h}{\partial \normal_\edge} } \ds}.
\end{align*}
We note that due to \eqref{eqn:vanishing_moment:dbc}--\eqref{eqn:vanishing_moment:extra_bc} the first term is zero.
For the second term, we follow the approach taken in \citet{zhao2018morley} and introduce the interpolation $\Pi_1 w_h \in H^1_0(\Omega)$ of $w_h$ into the lowest order conforming VEM space defined in, e.g., \citet{beirao2013basic}. Then, by the definition of the $\Lp$-projection and Lemma~\ref{lemma:l2proj},
\begin{align*}
    \abs*{\sum_{\edge\in\edgesB}\varepsilon \int_\edge\dfrac{\partial \laplace \uEps}{\partial \normal_\edge} w_h \ds}
    &= \abs*{\sum_{\edge\in\edgesB}\varepsilon\int_\edge\left(\dfrac{\partial \laplace \uEps}{\partial \normal_\edge} - \LtwoProj{\edge}{\polOrder-3}\left(\dfrac{\partial \laplace \uEps}{\partial \normal_\edge} \right)\right) (w_h - \Pi_1w_h) \ds}\\
    & \leq \varepsilon \sum_{\edge\in\edgesB} \norm*{\dfrac{\partial \laplace \uEps}{\partial \normal_\edge} - \LtwoProj{\edge}{\polOrder-3}\left(\dfrac{\partial \laplace \uEps}{\partial \normal_\edge} \right)}_{0,\edge}\norm*{w_h - \Pi_1w_h}_{0,\edge}\\
    &\leq \varepsilon \sum_{\edge\in\edgesB} h_\edge^{r - 2 -\nicefrac{1}{2}}\norm*{\dfrac{\partial \laplace \uEps}{\partial \normal_\edge}}_{r-2,\omega_\edge}h^{2-\nicefrac{1}{2}}_\edge|w_h|_{2,\omega_\edge}\\
    &\leq \varepsilon h^{r-1}\|\uEps\|_{r+1}|w_h|_{2,h}.
\end{align*}
Similarly, by the definition of the $\Lp$-projection and Lemma~\ref{lemma:l2proj},
\begin{align*}
    \abs*{\sum_{\edge\in\edgesB}\varepsilon\int_\edge \dfrac{\partial ^2 \uEps}{\partial \normal_\edge\partial \tangential_\edge} \dfrac{\partial w_h}{\partial \tangential_\edge} \ds} 
    &\leq \sum_{\edge\in\edgesB}\varepsilon \norm*{\left(\dfrac{\partial ^2 \uEps}{\partial \normal_\edge\partial \tangential_\edge} - \LtwoProj{\edge}{\polOrder - 2} \left(\dfrac{\partial ^2 \uEps}{\partial \normal_\edge\partial \tangential_\edge}\right) \right)}_{0,\edge} \norm*{\dfrac{\partial w_h}{\partial \tangential_\edge} - \LtwoProj{\edge}{0} \left(\dfrac{\partial w_h}{\partial \tangential_\edge} \right)}_{0,\edge}\\
    &\leq \sum_{\edge\in\edgesB}\varepsilon h^{r-1-\nicefrac{1}{2}}_\edge\norm*{ \dfrac{\partial ^2 \uEps}{\partial \normal_\edge\partial \tangential_\edge}}_{r-1,\omega_\edge} h^{1-\nicefrac{1}{2}}_\edge\norm*{\dfrac{\partial w_h}{\partial \tangential_\edge}}_{1,\omega_\edge}\\
    &\leq C\varepsilon h^{r-1}\|\uEps\|_{r+1}|w_h|_{2,h}.
\end{align*}
The final term follows almost identically to the proof of Lemma~\ref{lemma:nonconforming:error_bound}, and the remaining two terms follow similar to the above; therefore,
\begin{align}\label{eqn:fixed_point:operator_bound:e5}
    Q_5 \leq C\varepsilon h^{r-1}\|\uEps\|_{r+1}\|w_h\|_{2,h}.
\end{align}

Finally, the proof of the bound
\begin{align}\label{eqn:fixed_point:operator_bound:e6}
    Q_6 \leq C\varepsilon h^{r-1}\|\uEps\|_{r+1}\|w_h\|_{2,h}
\end{align}
for $Q_6$ follows identically to the proof of Lemma~\ref{lemma:strang:boundary}.

Combining \eqref{eqn:fixed_point:operator_bound:e1}, \eqref{eqn:fixed_point:operator_bound:e2}--\eqref{eqn:fixed_point:operator_bound:e6} and applying the coercivity of $\ALh(\cdot,\cdot)$ completes the proof.
\end{proof}

As we need to prove the conditions of the Banach fixed point theorem are met, we first show that within the ball $\ball{\uEpsInterp}{\zeta}$ that $T_h$ is Lipschitz continuous with a constant dependent on $h$ and $\varepsilon$. This will then allow us to prove a contraction of $T_h$.
\begin{lemma}\label{lemma:fixed_point:contraction}
    For any $w_h, v_h \in \ball{\uEpsInterp}{\zeta}$
    \[
        \|T_h(w_h)-T_h(v_h)\|_{2,h} \leq C_7(\varepsilon,h)\|w_h-v_h\|_{2,h}.
    \]
    where $C_7(\varepsilon,h) = C \left((\zeta \varepsilon^{-1} +  2\varepsilon^{-\nicefrac{3}{2}}) h+\varepsilon^{-2}h^2\right)$ is a positive constant dependent on $h$.
\end{lemma}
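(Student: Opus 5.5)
Set $\delta_h \coloneqq T_h(w_h)-T_h(v_h)\in\vemSpace{0}$. Subtracting the two instances of \eqref{eqn:T} cancels the data terms. Using linearity of $\ALh$, this gives for all $y_h\in\vemSpace{0}$
\[
\ALh(\delta_h,y_h) = \ALh(w_h-v_h,y_h) - \bigl(\AQLh(w_h,y_h)-\AQLh(v_h,y_h)\bigr).
\]
Choosing $y_h=\delta_h$ and using the coercivity \eqref{eqn:linear:discrete:coercivity}, it suffices to bound the right-hand side by $C_3^*(\varepsilon)C_7(\varepsilon,h)\norm{w_h-v_h}_{2,h}\norm{y_h}_{2,h}$. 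The idea is that $\ALh$ is the linearization of $\AQLh$ at $\uEps$, up to sign conventions: the $\varepsilon$-Hessian parts and the $\varepsilon h_\element^{-2}$ stabilization parts are (after matching signs with $\aQLh$) identical and cancel. What remains is a \emph{linearization defect} of the determinant term plus stabilization mismatches.

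For the determinant, I apply Proposition~\ref{prop:mean_value} elementwise. There is $t\in[0,1]$ with $\det(\hessProj w_h)-\det(\hessProj v_h) = \cof(\Theta_t):\hessProj(w_h-v_h)$, where $\Theta_t=t\hessProj w_h+(1-t)\hessProj v_h$. Since $\cof$ is linear in two dimensions, I split $\cof(\Theta_t)=\uEpsCof+\cof(\Theta_t-\hess\uEps)$. Next I write $\Theta_t-\hess\uEps = t\hessProj(w_h-\uEpsInterp)+(1-t)\hessProj(v_h-\uEpsInterp)+\hessProj\uEpsInterp-\hess\uEps$. The first two pieces are bounded in $\Lp[2]$ by $\zeta$, using the ball and Corollary~\ref{corol:proj_eq_l2proj}. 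The last piece is $\mathcal{O}(h^{r-1})$, by Lemmas~\ref{lemma:l2proj} and \ref{lemma:interpolation}.

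The $\uEpsCof$ part must then be compared with the $\int_\element\uEpsCof\gradProj(w_h-v_h)\cdot\gradProj y_h$ term of $\ALh$. To do this I rewrite $\int_\element \uEpsCof:\hessProj z\,\valueProj y_h$ via $\LtwoProj{\element}{}$ properties, and integrate by parts using Proposition~\ref{prop:div_free} to move to a gradient form. The mismatch consists of $(I-\LtwoProj{\element}{\polOrder-2})$ and $(I-\LtwoProj{\element}{\polOrder-1})$ remainders and edge terms. These gain a factor $h$ via Lemma~\ref{lemma:l2proj}, with $\norm{\uEpsCof}_{1,\infty}$ or $\norm{\uEps}_{3}$ producing the $\varepsilon^{-3/2}h$ contribution (Proposition~\ref{prop:ueps_bounds}). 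For the perturbation part, I use Hölder with $\Lp[2]\times\Lp[2]\times\Lp[\infty]$:
\[
\norm{\cof(\Theta_t-\hess\uEps)}_{0,\element}\,\norm{\hessProj(w_h-v_h)}_{0,\element}\,\norm{\valueProj y_h}_{0,\infty,\element}.
\]
The last factor is bounded by $Ch_\element^{-1}\norm{y_h}_{0,\element}$ via Corollary~\ref{corol:h2full} and polynomial inverse estimates. Lemma~\ref{lemma:sobolev} and \eqref{eqn:poincare} then recover $h\seminorm{y_h}_{2,h}$, exactly as for $Q_2$ in Lemma~\ref{lemma:fixed_point:operator_bound}. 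Dividing by $C_3^*=\mathcal{O}(\varepsilon)$ yields the $\zeta\varepsilon^{-1}h$ term.

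The stabilization difference needs separate handling. $\StabL$ carries the weight $\norm{\uEpsCof}_{0,\infty}$, whereas $\StabQLb$ carries $\norm{\cof(\hessProj[\element,0]\cdot)}_{0,\infty}$ evaluated at $w_h$ and $v_h$ respectively. $\StabQLb$ is nonlinear through its weight, so I split it as
\[
\StabQLb[w_h](w_h-v_h,\cdot)+\bigl(\text{weight}(w_h)-\text{weight}(v_h)\bigr)\,\StabQLb(v_h,\cdot)\text{-sum}.
\]
Both pieces are estimated with the dofi--dofi equivalence \eqref{eqn:stab:linear:equivalence}, the fact that the arguments are $(I-\valueProj)$ of VEM functions, and $\norm{(I-\valueProj)z}_{0,\element}\le Ch_\element^2\seminorm{z}_{2,\element}$. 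The weight difference is bounded by an $\Lp[\infty]$ norm of a constant Hessian projection; converting it to $\Lp[2]$ costs $h^{-1}$, and these terms produce the $\varepsilon^{-2}h^2$ contribution after dividing by $\varepsilon$. I expect this step, together with the careful matching of $\uEpsCof:\hessProj$ against $\uEpsCof\gradProj\cdot\gradProj$, to be the main obstacle, since the powers of $h$ must be tracked precisely so that each term gains at least one power. Collecting all bounds gives $C_7(\varepsilon,h)$.
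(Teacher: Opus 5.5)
Your route differs from the paper's in how the $\uEpsCof$-terms are handled. As written it does not reach the stated $C_7$: there is a sign slip, and the $\varepsilon$-bookkeeping of the linearization defect is wrong.

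\textbf{The sign.} From \eqref{eqn:T} one gets
\[
\ALh(T_h(w_h)-T_h(v_h),y_h)=\ALh(\gamma_h,y_h)+\AQLh(w_h,y_h)-\AQLh(v_h,y_h),\qquad \gamma_h\coloneqq w_h-v_h,
\]
with a plus sign. Only then do the $\varepsilon$-Hessian and $\varepsilon h_\element^{-2}$ stabilization parts cancel. With your minus sign they double, and so would the $\uEpsCof$-parts you want to match.

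\textbf{The $\varepsilon$-powers of the defect.} The defect terms do not produce the claimed $\varepsilon^{-3/2}h$. They are controlled by $\norm{\uEpsCof}_{0,\infty}$ and $\norm{\grad\uEpsCof}_{0}\le C\norm{\uEps}_{3}$, both $\mathcal{O}(\varepsilon^{-1})$ by Proposition~\ref{prop:ueps_bounds}. That proposition says nothing about $\norm{\uEpsCof}_{1,\infty}$. For instance, in $\int_\element\uEpsCof:(\hessProj\gamma_h-\hess\gamma_h)\valueProj y_h\dx$ the factor $\hess\gamma_h$ has no extra smoothness. So the gain must come from $(I-\LtwoProj{\element}{\polOrder-2})(\uEpsCof\valueProj y_h)$, which costs $\norm{\grad\uEpsCof}_{0,\element}\norm{\valueProj y_h}_{0,\infty,\element}$. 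The interelement terms $\int_\edge(\normal_\edge^\top\uEpsCof\normal_\edge)\jmp*{\partial\gamma_h/\partial\normal_\edge}y_h\ds$ created by your integration by parts behave the same way.

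One power of $h$, followed by division by $C^*_3(\varepsilon)=\mathcal{O}(\varepsilon)$, gives $\varepsilon^{-2}h$. For bounded $\zeta$ this is not bounded by any fixed multiple of $C_7(\varepsilon,h)$ as $h,\varepsilon\to0$. To fit $C_7$ you need a second factor $h$. It can come from $\norm{y_h}_{0,\infty,\element}\le Ch_\element^{-1}\norm{y_h}_{0,\element}\le Ch_\element\norm{y_h}_{2,\element}$ (Corollary~\ref{corol:h2full}, Lemma~\ref{lemma:sobolev}). Alternatively it can come from $\norm{y_h}_{1,\element}\le Ch_\element\norm{y_h}_{2,\element}$ (Lemmas~\ref{l3.9} and~\ref{lemma:sobolev}). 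Either moves these terms into the $\varepsilon^{-2}h^2$ part.

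\textbf{Where the terms of $C_7$ actually come from.} The $\varepsilon^{-3/2}h$ part comes from factors of size $\norm{\uEps}_2=\mathcal{O}(\varepsilon^{-1/2})$. In your decomposition these are:
\begin{enumerate}
\item bounding $\norm{\hessProj\uEpsInterp-\hess\uEps}_{0,\element}$ crudely by $C\norm{\uEps}_{2,\element}$ (keeping the $\mathcal{O}(h^{r-1})$ bound instead leaves an $\varepsilon^{-1}h^{r}\norm{\uEps}_{r+1}$ term not of the form of $C_7$);
\item the $\StabQLb$ weight $\norm{\cof(\hessProj[\element,0]v_h)}_{0,\infty,\element}\le Ch_\element^{-1}\norm{v_h}_{2,\element}$;
\item the weight difference, which is $\le Ch_\element^{-1}\norm{\gamma_h}_{2,\element}$.
\end{enumerate}
Items 2 and 3 are each multiplied by dof products of size $h_\element^2$. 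The $\varepsilon^{-2}h^2$ part of the stabilization comes from the $\norm{\uEpsCof}_{0,\infty,\element}$ weight in $\StabL$, which cancels against nothing. So your attributions are reversed.

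Note also that \eqref{eqn:stab:linear:equivalence} by itself yields no power of $h$. What is needed is the dof-wise scaling $\abs{\lambda(\gamma_h-\valueProj\gamma_h)\lambda(z_h-\valueProj z_h)}\le Ch_\element^2\seminorm{\gamma_h}_{2,\element}\seminorm{z_h}_{2,\element}$. The paper proves this separately for each of \ref{dofs:value}--\ref{dofs:volume}.

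\textbf{Comparison with the paper.} The paper never matches $\uEpsCof:\hessProj$ against $\uEpsCof\gradProj\cdot\gradProj$. After the $\varepsilon$-cancellation it bounds $T_1=\sum_{\element}\int_\element\uEpsCof\gradProj\gamma_h\cdot\gradProj z_h\dx\le C\varepsilon^{-1}h^2\norm{\gamma_h}_{2,h}\norm{z_h}_{2,h}$ directly, via $\norm{\cdot}_{1,\element}\le Ch_\element\norm{\cdot}_{2,\element}$. It treats the determinant difference with the \emph{full} cofactor, $\norm{\cof(\Theta_t)}_{0,\element}\le C(\zeta+2\norm{\uEps}_{2,\element})$, against $\norm{\valueProj z_h}_{0,\infty,\element}\le Ch_\element\norm{z_h}_{2,\element}$. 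This is exactly where $\zeta\varepsilon^{-1}h$ and $2\varepsilon^{-3/2}h$ come from. You already use this $\Lp[2]\times\Lp[2]\times\Lp[\infty]$ device for $\cof(\Theta_t-\hess\uEps)$, and it applies verbatim to the $\uEpsCof$ part. So the step you flag as the main obstacle is not needed for the stated $C_7$.

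What your cancellation buys is robustness. It reflects why $T_h$ should contract: $\ALh$ approximates minus the linearization of $\AQLh$ at $\uEps$. It also frees the $\uEpsCof$-terms from the inequality $\norm{v}_{0,\element}\le Ch_\element^2\norm{v}_{2,\element}$ of Lemma~\ref{lemma:sobolev}, on which the paper's bounds for $T_1$ and $T_2$ rest. That inequality fails, for example, for $v$ constant on $\element$. Without it, however, your defect terms only give $\varepsilon^{-2}h$, i.e.\ a Lipschitz constant of a different form than $C_7$.
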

\begin{proof}
For all $z_h \in \vemSpace{0}$, defining $\gamma_h=w_h-v_h$, we have from \eqref{eqn:T} and the definitions of $\AQLh$ and $\ALh$ that
\begin{align*}
    \ALh(T_h&(w_h)-T_h(v_h), z_h) \\&= \ALh(T_h(w_h)-w_h, z_h) + \ALh(w_h, z_h)-\ALh(v_h, z_h) - \ALh(T_h(v_h)-v_h, z_h)\\
    &=\AQLh(w_h,z_h)+ \ALh(w_h, z_h)-\ALh(v_h, z_h) - \AQLh(v_h, z_h)\\
    &=\sum_{\element\in\mesh}\int_\element\uEpsCof \gradProj\gamma_h\gradProj z_h\dx + \sum_{\element\in\mesh}\int_\element (\det(\hessProj w_h)-\det(\hessProj v_h))\valueProj z_h\dx + \mathcal{S}\\
    &\eqqcolon T_1+T_2+\mathcal{S}.
\end{align*}
where
\begin{align*}
    \mathcal{S} &= \sum_{\element\in\mesh}(\StabQL(w_h - \valueProj w_h,z_h - \valueProj z_h) - \StabQL(v_h - \valueProj v_h,z_h - \valueProj z_h))\\
    &\qquad+ \sum_{\element\in\mesh}(\StabL(w_h - \valueProj w_h,z_h - \valueProj z_h)-\StabL(v_h - \valueProj v_h,z_h - \valueProj z_h))
\end{align*}
From H\"older's inequality, Proposition~\ref{prop:ueps_bounds}, Corollary~\ref{corol:proj_eq_l2proj}, Lemma~\ref{l3.9} and Lemma~\ref{lemma:sobolev} we have that
\begin{align}
    T_1 &\leq \sum_{\element\in\mesh} \|\Phi_\varepsilon\|_{0,\infty,\element}\|\gradProj \gamma_h\|_{0,2,\element}\|\gradProj z_h\|_{0,2,\element} \notag\\
    & \leq C \sum_{\element\in\mesh} \varepsilon^{-1}\norm{\gamma_h}_{1,\element} \norm{z_h}_{1,\element}\notag\\
    & \leq C \sum_{\element\in\mesh} \varepsilon^{-1} h_\element \norm{\gamma_h}_{2,\element} h_\element \norm{z_h}_{2,\element} \notag\\
    & \leq C\varepsilon^{-1}h^2\|\gamma_h\|_{2,h}\|z_h\|_{2,h}. \label{eqn:fixed_point:contraction:t1}
\end{align}
From Proposition~\ref{prop:mean_value}, there exists a $t\in[0,1]$ such that
\[
    T_2 \leq \sum_{\element\in\mesh} \|\cof((1-t)(\hessProj v_h)+t\hessProj w_h)\|_{0,\element}\|\hessProj \gamma_h\|_{0,\element}\|\valueProj z_h\|_{0,\infty,\element}.
\]
Applying Corollary~\ref{corol:proj_eq_l2proj}, standard inverse inequalities for polynomials, Lemma~\ref{lemma:sobolev}, Lemma~\ref{lemma:interpolation}, Proposition~\ref{prop:ueps_bounds}, and the fact that $v_h,w_h\in\ball{\uEpsInterp}{\zeta}$, we see that
\begin{align}
    T_2 &\leq  C\sum_{\element\in\mesh} (t\| w_h\|_{2,\element}+(1-t)\|v_h\|_{2,\element})\|\gamma_h\|_{2,\element}h^{-1}_\element\|z_h\|_{0,\element}\notag\\
    &\leq  C\sum_{\element\in\mesh} (t\| w_h-\uEpsInterp\|_{2,\element}+(1-t)\|v_h-\uEpsInterp\|_{2,\element} \notag\\
    &\qquad\qquad\qquad+ \| \uEpsInterp - \uEps\|_{2,\element} + \|\uEps\|_{2,\element})\|\gamma_h\|_{2,\element} h^{-1}_\element h_\element^2\|z_h\|_{2,\element}\notag\\
    &\leq C\sum_{\element\in\mesh} (\zeta + 2\|\uEps\|_{2,\element})\|\gamma_h\|_{2,\element}  h_\element\|z_h\|_{2,\element}\notag\\
    &\leq  C (\zeta +  2\varepsilon^{-\nicefrac{1}{2}}) h \|\gamma_h\|_{2,h}\|z_h\|_{2,h}. \label{eqn:fixed_point:contraction:t2}
\end{align}
From \eqref{eqn:stab:ql:a}--\eqref{eqn:stab:ql} we have that
\begin{align*}
    \mathcal{S} &= \sum_{\element\in\mesh}\left(\|\cof(\hessProj[\element,0] w_h)\|_{0,\infty,\element}+\|\Phi_\varepsilon\|_{0,\infty,\element}\right)\sum_{\lambda\in\lambda^\element}  {\lambda(w_h - \valueProj w_h)\lambda(z_h - \valueProj z_h)} \notag \\
    &\quad - \sum_{\element\in\mesh}\left(\|\cof(\hessProj[\element,0] v_h)\|_{0,\infty,\element}+\|\Phi_\varepsilon\|_{0,\infty,\element}\right)\sum_{\lambda\in\Lambda^\element}  {\lambda(v_h - \valueProj v_h)\lambda(z_h - \valueProj z_h)} \notag \\
    &\leq  \sum_{\element\in\mesh}\abs*{\|\cof(\hessProj[\element,0] w_h)\|_{0,\infty,\element} - \|\cof(\hessProj[\element,0] v_h)\|_{0,\infty,\element}}\sum_{\lambda\in\lambda^\element} \abs*{{\lambda(w_h - \valueProj w_h)\lambda(z_h - \valueProj z_h)}} \notag \\
    &\quad +\sum_{\element\in\mesh}\left( \|\cof(\hessProj[\element,0] v_h)\|_{0,\infty,\element}+\|\Phi_\varepsilon\|_{0,\infty,\element}\right) \sum_{\lambda\in\lambda^\element} \abs*{\lambda(\gamma_h - \valueProj \gamma_h)\lambda(z_h - \valueProj z_h)} \notag \\
    &\leq \sum_{\element\in\mesh}\|\cof(\hessProj[\element,0] \gamma_h)\|_{0,\infty,\element}\sum_{\lambda\in\lambda^\element}  \abs*{\lambda(w_h - \valueProj w_h)\lambda(z_h - \valueProj z_h)} \notag \\
    &\quad +\sum_{\element\in\mesh}\left( \|\cof(\hessProj[\element,0] v_h)\|_{0,\infty,\element}+\|\Phi_\varepsilon\|_{0,\infty,\element}\right)  \sum_{\lambda\in\lambda^\element}\abs*{\lambda(\gamma_h - \valueProj \gamma_h)\lambda(z_h - \valueProj z_h)}.
\end{align*}
By standard polynomial inverse inequalities and Corollary~\ref{corol:proj_eq_l2proj}
\[
    \|\cof(\hessProj[\element,0] \gamma_h)\|_{0,\infty,\element} = \|\hessProj[\element,0] \gamma_h\|_{0,\infty,\element}
    \leq Ch^{-1}_\element\|\hessProj[\element,0] \gamma_h\|_{0,2,\element}
    \leq Ch^{-1}_\element\|\gamma_h\|_{2,\element},
\]
and similarly for $\|\cof(\hessProj[\element,0] v_h)\|_{0,\infty,\element}$; therefore, by applying this result and Proposition~\ref{prop:ueps_bounds}, we get that
\begin{align}
\mathcal{S}
    &\leq C\sum_{\element\in\mesh}h_\element^{-1}\|\gamma_h\|_{2,\element}\sum_{\lambda\in\lambda^\element}  \abs*{\lambda(w_h - \valueProj w_h)\lambda(z_h - \valueProj z_h)} \notag \\
    &\quad +\sum_{\element\in\mesh}\left( Ch_\element^{-1}\|v_h\|_{2,\element}+\varepsilon^{-1}\right)  \sum_{\lambda\in\lambda^\element}\abs*{\lambda(\gamma_h - \valueProj \gamma_h)\lambda(z_h - \valueProj z_h)} 
    \label{eqn:fixed_point:contraction:stab}
\end{align}

We now need to bound the degrees of freedom. We consider separately, for $\gamma_h$, the four types of degrees of freedom, \ref{dofs:value}--\ref{dofs:volume}, which we denote as $\lambda_1,\dots,\lambda_4$, respectively. We note that the bounds for the terms containing $w_h$ follow identically.

For the interior volume degrees of freedom \ref{dofs:volume} there exists a function $q\in \poly(\element)$ which satisfies Assumption~\ref{assumption:monomial_linfty} and, therefore,
\begin{align*}
\abs*{\lambda_4(\gamma_h - \valueProj \gamma_h)\lambda_4(z_h - \valueProj z_h)}
    &= \abs*{ \dfrac{1}{h_\element^4}\left(\int_\element (I - \LtwoProj{\element}{\polOrder})\gamma_h q \dx \right)\left(\int_\element (I - \LtwoProj{\element}{\polOrder})z_h q \dx \right)} \\
    &\leq \dfrac{1}{h_\element^4}\|(I - \LtwoProj{\element}{\polOrder})\gamma_h\|_{0,1,\element}\|q\|_{0,\infty,\element}\|(I - \LtwoProj{\element}{\polOrder})z_h\|_{0,1,\element}\|q\|_{0,\infty,\element} \\
    &\leq \dfrac{1}{h_\element^4}h_\element^2\|(I - \LtwoProj{\element}{\polOrder})\gamma_h\|_{0,\element}\|(I - \LtwoProj{\element}{\polOrder})z_h\|_{0,\element}.
\end{align*}
By applying Lemma~\ref{lemma:l2proj} we have that
\begin{equation}\label{eqn:fixed_point:contraction:stab:d4}
    \abs*{\lambda_4(\gamma_h - \valueProj \gamma_h)\lambda_4(z_h - \valueProj z_h)} \leq C h_\element^2|\gamma_h|_{2,\element}|z_h|_{2,\element}.
\end{equation}

For the vertex degrees of freedom \ref{dofs:value} we note that the value at the vertex will be bounded by the $\Lp[\infty]$-norm of the function on the boundary of the element; i.e.,
\[
\abs*{\lambda_1(\gamma_h - \valueProj \gamma_h)\lambda_1(z_h - \valueProj z_h)} 
\leq \|\gamma_h- \valueProj \gamma_h\|_{0,\infty,\partial \element}\|z_h - \valueProj z_h\|_{0,\infty,\partial \element}.
\]
Applying Lemmas~\ref{lemma:linfty_bdry}, \ref{lemma:h1_bdry}, and~\ref{lemma:l2proj},
\begin{align}
&\abs*{\lambda_1(\gamma_h - \valueProj \gamma_h)\lambda_1(z_h - \valueProj z_h)} \notag\\
    &\quad\leq C(h^{-1}_\element\|(I - \LtwoProj{\element}{\polOrder})\gamma_h\|^2_{0,\partial \element} + |(I - \LtwoProj{\element}{\polOrder})\gamma_h|^2_{\nicefrac12,\partial \element})^{\nicefrac{1}{2}}(h^{-1}_\element\|(I - \LtwoProj{\element}{\polOrder})z_h\|^2_{0,\partial \element} + |(I - \LtwoProj{\element}{\polOrder})z_h|^2_{\nicefrac12,\partial \element})^{\nicefrac{1}{2}}\notag\\
    &\quad\leq C(h^{-2}_\element\|(I - \LtwoProj{\element}{\polOrder})\gamma_h\|^2_{0,\element} + 2|(I - \LtwoProj{\element}{\polOrder})\gamma_h|^2_{1,\element})^{\nicefrac{1}{2}}(h^{-2}_\element\|(I - \LtwoProj{\element}{\polOrder})z_h\|^2_{0,\element} + 2|(I - \LtwoProj{\element}{\polOrder})z_h|^2_{1,\element})^{\nicefrac{1}{2}}\notag\\
    &\quad\leq C(h^{2}_\element|\gamma_h|^2_{2,\element} + 2h^{2}_\element|\gamma_h|^2_{2,\element})^{\nicefrac{1}{2}}(h^{2}_\element|z_h|^2_{2,\element} + 2h^{2}_\element|z_h|^2_{2,\element})^{\nicefrac{1}{2}}\notag\\
    &\quad= Ch^{2}_\element|\gamma_h|_{2,\element}|z_h|_{2,\element}.\label{eqn:fixed_point:contraction:stab:d1}
\end{align}

For the edge moment degrees of freedom \ref{dofs:edge}, on the edge $\edge\subset \partial \element$, there exists a function $q\in \poly[\polOrder-2](\edge)$ which satisfies Assumption~\ref{assumption:monomial_linfty} and, therefore,
\begin{align*}
\abs*{\lambda_2(\gamma_h - \valueProj \gamma_h)\lambda_2(z_h - \valueProj z_h)}
    &= \abs*{ \dfrac{1}{h_\edge^2}\left(\int_\edge (I - \LtwoProj{\element}{\polOrder})\gamma_h q \ds\right) \left(\int_\edge (I - \LtwoProj{\element}{\polOrder})z_h q \ds \right)} \\
    &\leq \dfrac{1}{h_\edge^2}\|(I - \LtwoProj{\element}{\polOrder})\gamma_h\|_{0,1,\edge}\|q\|_{0,\infty,\edge}\|(I - \LtwoProj{\element}{\polOrder})z_h\|_{0,1,\edge}\|q\|_{0,\infty,\edge} \\
    &\leq \dfrac{1}{h_\edge^2}h_\edge\|(I - \LtwoProj{\element}{\polOrder})\gamma_h\|_{0,\edge}\|(I - \LtwoProj{\element}{\polOrder})z_h\|_{0,\edge}.
\end{align*}
By applying Lemma~\ref{lemma:l2proj}
\begin{equation}\label{eqn:fixed_point:contraction:stab:d2}
\abs*{\lambda_2(\gamma_h - \valueProj \gamma_h)\lambda_2(z_h - \valueProj z_h)}
    \leq C\dfrac{1}{h_\edge}h_\edge^{2-\nicefrac{1}{2}}|\gamma_h|_{2,\omega_\edge}h_\edge^{2-\nicefrac{1}{2}}|z_h|_{2,\omega_\edge}
    = Ch_\edge^2|\gamma_h|_{2,\omega_\edge}|z_h|_{2,\omega_\edge}.
\end{equation}
Similarly, for the edge normal moments \ref{dofs:normals} we can show that
\begin{equation}\label{eqn:fixed_point:contraction:stab:d3}
\abs*{\lambda_3(\gamma_h - \valueProj \gamma_h)\lambda_3(z_h - \valueProj z_h)}
    \leq Ch_\edge h_\edge^{1-\nicefrac{1}{2}}\seminorm*{\frac{\partial \gamma_h}{\partial\normal_\edge}}_{1,\omega_\edge}h_\edge^{1-\nicefrac{1}{2}}\seminorm*{\frac{\partial z_h}{\partial\normal_\edge}}_{1,\omega_\edge}
    \leq Ch_\edge^2|\gamma_h|_{2,\omega_\edge}|z_h|_{2,\omega_\edge}.
\end{equation}

Applying \eqref{eqn:fixed_point:contraction:stab:d4}--\eqref{eqn:fixed_point:contraction:stab:d3} to \eqref{eqn:fixed_point:contraction:stab} gives
\begin{align*}
\mathcal{S}
    &\leq C\sum_{\element\in\mesh} \left(\left(\|v_h\|_{2,\element}+\|w_h\|_{2,\element}\right)h_\element + \varepsilon^{-1}h_\element^2\right) \|\gamma_h\|_{2,\element}\|z_h\|_{2,\element}\\
    &\leq C\left((\zeta +  2\varepsilon^{-\nicefrac{1}{2}})h + \varepsilon^{-1}h^2\right) \seminorm{\gamma_h}_{2,h}\seminorm{z_h}_{2,h}
\end{align*}
where the inequality follows similarly to \eqref{eqn:fixed_point:contraction:t2} as $v_h,w_h\in\ball{\uEpsInterp}{\zeta}$. 
Combining this result with \eqref{eqn:fixed_point:contraction:t1} and \eqref{eqn:fixed_point:contraction:t2}, and applying the coercivity of $\ALh(\cdot,\cdot)$ completes the proof.
\end{proof}

We can now apply the fixed point theorem to prove the existence of a unique fixed point conditionally on the mesh size $h$.
\begin{lemma}\label{lemma:fixed_point:unique}
There exist constants $h_1>0$ and $\zeta_0>0$ such that for all $h\leq h_1$ the operator $T_h$ has a unique fixed point in $\ball{\uEpsInterp}{\zeta_0}$.
\end{lemma}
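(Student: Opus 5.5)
We will verify the two hypotheses of the Banach fixed point theorem on the closed set $\ball{\uEpsInterp}{\zeta_0}$, which is a complete metric space: it is a closed ball in the finite-dimensional affine space $\vemSpace{g}$, equipped with the norm $\norm{\emptyarg}_{2,h}$. The two hypotheses are that $T_h$ maps $\ball{\uEpsInterp}{\zeta_0}$ into itself, and that $T_h$ is a strict contraction there. The radius is chosen to be proportional to the consistency error of Lemma~\ref{lemma:fixed_point:operator_bound}. Concretely, set
\[
\zeta_0 \coloneqq 2C h^{r-1}\left(C_6(\varepsilon)\norm{\uEps}_{r+1}+\norm{f}_{r-3}\right),
\]
where $C$ is the constant of Lemma~\ref{lemma:fixed_point:operator_bound}. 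Then $\zeta_0 = \mathcal{O}(\varepsilon^{-\nicefrac32}h^{r-1})$, so $\zeta_0\to 0$ as $h\to 0$ (because $r\geq 2$). In particular, for $h\leq 1$ we also have a bound $\zeta_0\leq \bar\zeta(\varepsilon)$ that is independent of $h$.

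\emph{Contraction.} Substitute $\zeta=\zeta_0$ into Lemma~\ref{lemma:fixed_point:contraction}. This gives
\[
C_7(\varepsilon,h)=C\left((\zeta_0\varepsilon^{-1}+2\varepsilon^{-\nicefrac32})h+\varepsilon^{-2}h^2\right).
\]
Each term carries at least one factor of $h$, and the remaining coefficients are bounded uniformly for $h\leq1$. Hence we can choose $h_1>0$, depending only on $\varepsilon$ and the data, such that $C_7(\varepsilon,h)\leq \tfrac12$ for all $h\leq h_1$. A sufficient choice is $h_1\lesssim \min(\varepsilon^{\nicefrac32}, \varepsilon)$, with constants absorbed.

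\emph{Self-mapping.} Take $v_h\in\ball{\uEpsInterp}{\zeta_0}$. Note that $T_h$ maps $\vemSpace{g}$ to $\vemSpace{g}$, since $T_h(v_h)-v_h\in\vemSpace{0}$ by Theorem~\ref{thm:existence:linear}. By the triangle inequality,
\[
\norm{\uEpsInterp-T_h(v_h)}_{2,h}\leq \norm{\uEpsInterp-T_h(\uEpsInterp)}_{2,h}+\norm{T_h(\uEpsInterp)-T_h(v_h)}_{2,h}.
\]
The first term is at most $\tfrac12\zeta_0$ by Lemma~\ref{lemma:fixed_point:operator_bound} and the definition of $\zeta_0$. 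For the second term, we apply Lemma~\ref{lemma:fixed_point:contraction} with $w_h=\uEpsInterp$, which lies in the ball trivially. For $h\leq h_1$ this gives a bound of $\tfrac12\norm{\uEpsInterp-v_h}_{2,h}\leq\tfrac12\zeta_0$. Together, $T_h(v_h)\in\ball{\uEpsInterp}{\zeta_0}$. The Banach fixed point theorem then yields existence and uniqueness of a fixed point in the ball.

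\emph{Main obstacle.} The delicate point is that $\zeta_0$ depends on $h$ and itself appears in $C_7$. One must check that this does not make the argument circular. It does not: $\zeta_0 h$ is $\mathcal{O}(\varepsilon^{-\nicefrac32}h^{r})$, which is small. Care is also needed that the constant in the contraction estimate does not depend on $\zeta$ in any way beyond what is shown in Lemma~\ref{lemma:fixed_point:contraction}, so that $h_1$ can be fixed independently of the particular point in the ball. If one insists on an $h$-independent radius $\zeta_0$ instead, the same argument works with $\zeta_0=\bar\zeta(\varepsilon)$ fixed. In that case $h_1$ must additionally be small enough that $2Ch^{r-1}(C_6\norm{\uEps}_{r+1}+\norm{f}_{r-3})\leq\zeta_0$.
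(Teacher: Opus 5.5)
Your proposal is correct and follows essentially the same route as the paper. You take the ($h$-dependent) radius $\zeta_0$ equal to twice the consistency bound of Lemma~\ref{lemma:fixed_point:operator_bound}, choose $h_1$ so that the Lipschitz constant $C_7(\varepsilon,h)$ of Lemma~\ref{lemma:fixed_point:contraction} with $\zeta=\zeta_0$ is at most $\tfrac12$, get the self-map property from the triangle inequality about $T_h(\uEpsInterp)$, and conclude with the Banach fixed point theorem. The paper writes $h_1$ out explicitly as a minimum of four $\varepsilon$-dependent thresholds, one per term of $C_7$, whereas you argue existence of $h_1$ from $\zeta_0 h\to 0$; your remarks on completeness of the ball and on an $h$-independent radius are harmless additions.
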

\begin{proof}
    Selecting $\zeta_0 = 2C_6(\varepsilon)h^{r-1}\|\uEps\|_{r+1}+2\hat{C}h^{r-1}$, where $\hat{C} = C\|f\|_{r-3} $ and $C > 0$ is a constant, and
    \[
        h_1 := \min \left\{ \left(\dfrac{\varepsilon^{\nicefrac{3}{2}}}{16C}\right), \left(\dfrac{\varepsilon}{16CC_6(\varepsilon)\|\uEps\|_{r+1}}\right)^{\nicefrac{1}{r}}, \left(\dfrac{\varepsilon}{16C\hat{C}}\right)^{\nicefrac{1}{r}}, \left(\dfrac{\varepsilon^2}{8C}\right)^{\nicefrac{1}{2}} \right\}.
    \]
    Then, for any $h \leq h_1$ and $v_h \in \ball{\uEpsInterp}{\zeta_0}$, using Lemma~\ref{lemma:fixed_point:contraction} we see that
    \begin{align*}
        \|T_h(\uEpsInterp)-T_h(v_h)\|_{2,h} &\leq (Ch\varepsilon^{-1}\zeta_0 + 2Ch\varepsilon^{-\nicefrac{3}{2}}+Ch^2\varepsilon^{-2})\|\uEpsInterp-v_h\|_{2,h}\\
        & \leq (2CC_6(\varepsilon)\varepsilon^{-1}h^{r}\|\uEps\|_{r+1}+2C\hat{C}\varepsilon^{-1}h^{r} + 2C\varepsilon^{-\nicefrac{3}{2}}h+Ch^2\varepsilon^{-2})\|\uEpsInterp-v_h\|_{2,h} \\
        & \leq \Bigg[2CC_6(\varepsilon)\varepsilon^{-1}\|\uEps\|_{r+1}\left(\dfrac{\varepsilon}{16CC_6(\varepsilon)\|\uEps\|_{r+1}}\right) + 2C\hat{C}\varepsilon^{-1} \left(\dfrac{\varepsilon}{16C\hat{C}}\right)\\
        & \qquad +2C\varepsilon^{-\nicefrac{3}{2}}\left(\dfrac{\varepsilon^{\nicefrac{3}{2}}}{16C}\right)+C\varepsilon^{-2}\left(\dfrac{\varepsilon^2}{8C}\right)\Bigg]\|\uEpsInterp-v_h\|_{2,h} \\
        &\leq \frac{1}{2}\|\uEpsInterp-v_h\|_{2,h}.
    \end{align*}
    Furthermore $v_h\in\ball{\uEpsInterp}{\zeta_0}$ we can show that $T(v_h)\in\ball{\uEpsInterp}{\zeta_0}$ by applying the triangle inequality, the above result, and Lemma~\ref{lemma:fixed_point:operator_bound}; i.e.,
    \begin{align*}
        \|\uEpsInterp - T_h(v_h)\|_{2,h} &\leq \|\uEpsInterp - T_h(\uEpsInterp)\|_{2,h} + \|T_h(\uEpsInterp) - T_h(v_h)\|_{2,h} \\
        &\leq Ch^{r-1}(C_6(\varepsilon)\norm{\uEps}_{r+1}+\norm{f}_{r-3})+\frac{1}{2}\|\uEpsInterp-v_h\|_{2,h}\\
        &\leq \frac{\zeta_0}{2} + \frac{\zeta_0}{2} = \zeta_0.
    \end{align*}
    Hence, $T_h(\ball{\uEpsInterp}{\zeta_0})\subset \ball{\uEpsInterp}{\zeta_0}$, and we can apply the Banach fixed point theorem to complete the proof.
\end{proof}

\subsection{A priori error estimates}
We are now able to show the existence of a unique solution $\uh\in\vemSpace{g}$ to the virtual element formulation \eqref{eqn:vem}, and derive a priori error bounds for this solution in the $\Hk{2}$-, $\Hk{1}$-, and $\Lp$-norms, where the $\Hk{2}$-error is derived as a consequence of the fixed point iteration.

\begin{theorem}[$\Hk{2}$-error bound]\label{theorem:nonlinear:h2}
Suppose that Assumption~\ref{assumption:mesh} is satisfied. Let $\polOrder \geq 2$ be a positive integer and $\uEps \in \Hk{s+1}(\Omega)$, $s\geq 3$, be the solution of \eqref{eqn:vanishing_moment}--\eqref{eqn:vanishing_moment:extra_bc}. Define $r = \min(\polOrder, s)$ and assume that $f \in \Hk{r-3}(\Omega)$. Then, for a small enough $h$ there exists a unique solution satisfies $\uh \in \vemSpace{g}$ to the virtual element method \eqref{eqn:vem} and
\[
    \|\uEps - \uh\|_{2,h} \leq C_8(\varepsilon) h^{r-1} (\|\uEps\|_{r+1} +\|f\|_{r-3}),
\]
where $C_8(\varepsilon) \coloneqq C\max\{C_6(\varepsilon),1\} = \mathcal{O}(\varepsilon^{\nicefrac{-3}{2}})$ is a positive constant, independent of $h$.
\end{theorem}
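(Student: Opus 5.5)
The proof follows from the fixed-point results just established, plus a triangle inequality. I will take $h \le h_1$ with $h_1$ and $\zeta_0$ as in Lemma~\ref{lemma:fixed_point:unique}.

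\emph{Existence.} Lemma~\ref{lemma:fixed_point:unique} shows that $T_h$ has a unique fixed point $\uh \in \ball{\uEpsInterp}{\zeta_0}$. By the construction of $T_h$ in \eqref{eqn:T}, a fixed point makes the left-hand side $\ALh(T_h(\uh)-\uh,w_h)$ vanish for every $w_h\in\vemSpace{0}$. Hence $\uh$ satisfies \eqref{eqn:vem}, and the discrete problem has a solution in $\vemSpace{g}$.

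\emph{Uniqueness.} Conversely, any solution of \eqref{eqn:vem} is a fixed point of $T_h$. This uses Theorem~\ref{thm:existence:linear}: since $\ALh$ is coercive on $\vemSpace{0}$, the defining equation \eqref{eqn:T} has zero right-hand side and so forces $T_h(v_h)=v_h$. Uniqueness is therefore that of the fixed point, and it holds in the ball $\ball{\uEpsInterp}{\zeta_0}$. I would state this explicitly: uniqueness is local to the neighbourhood of $\uEpsInterp$, which is the standard sense for such Banach fixed-point arguments. I would not attempt a global uniqueness proof.

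\emph{Error estimate.} Write
\[
\|\uEps-\uh\|_{2,h}\le \|\uEps-\uEpsInterp\|_{2,h}+\|\uEpsInterp-\uh\|_{2,h}.
\]
\begin{itemize}
\item For the first term, Lemma~\ref{lemma:interpolation} with $m=r+1$ and $s=2$, summed over the elements, gives $Ch^{r-1}\|\uEps\|_{r+1}$.
\item For the second term, I would not settle for the crude bound $\zeta_0$. Instead I would use $\uh=T_h(\uh)$ and the contraction from the proof of Lemma~\ref{lemma:fixed_point:unique}:
\[
\|\uEpsInterp-\uh\|_{2,h}\le \|\uEpsInterp-T_h(\uEpsInterp)\|_{2,h}+\|T_h(\uEpsInterp)-T_h(\uh)\|_{2,h}\le \|\uEpsInterp-T_h(\uEpsInterp)\|_{2,h}+\tfrac12\|\uEpsInterp-\uh\|_{2,h}.
\]
Absorbing the last term into the left-hand side gives $\|\uEpsInterp-\uh\|_{2,h}\le 2\|\uEpsInterp-T_h(\uEpsInterp)\|_{2,h}$. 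Lemma~\ref{lemma:fixed_point:operator_bound} then bounds this by $2Ch^{r-1}(C_6(\varepsilon)\|\uEps\|_{r+1}+\|f\|_{r-3})$.
\end{itemize}
Either route gives the same order, since $\zeta_0$ itself has exactly this form.

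\emph{Constant.} Adding the two terms yields $C\max\{C_6(\varepsilon),1\}h^{r-1}(\|\uEps\|_{r+1}+\|f\|_{r-3})$. Since $C_6(\varepsilon)=\mathcal{O}(\varepsilon^{-3/2})$, this gives $C_8(\varepsilon)=\mathcal{O}(\varepsilon^{-3/2})$.

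\emph{Main obstacle.} The only delicate step is using the contraction factor $\tfrac12$ at the pair $(\uEpsInterp,\uh)$. This requires both points to lie in $\ball{\uEpsInterp}{\zeta_0}$: $\uEpsInterp$ is the centre, and $\uh$ lies in the ball by Lemma~\ref{lemma:fixed_point:unique}. It also requires that the condition $h\le h_1$ fixed there is the "small enough $h$" in the statement. The rest is bookkeeping of the $\varepsilon$-dependence.
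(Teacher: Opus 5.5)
Your proposal is correct and takes essentially the same route as the paper: it too identifies the fixed point of $T_h$ from Lemma~\ref{lemma:fixed_point:unique} as the solution of \eqref{eqn:vem} and then uses the same triangle inequality. The one difference is that the paper bounds $\|\uEpsInterp-\uh\|_{2,h}$ directly by $\zeta_0$ instead of re-deriving it through the contraction, which gives the identical estimate because $\zeta_0$ is by construction twice the bound of Lemma~\ref{lemma:fixed_point:operator_bound}. Your caveat that uniqueness is only established within $\ball{\uEpsInterp}{\zeta_0}$ is accurate, and that local uniqueness is also all the paper's Banach fixed-point argument delivers.
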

\begin{proof}
    From \eqref{eqn:T} we can see that a fixed point of the operator $T_h$ is a solution of \eqref{eqn:vem}, and by Lemma~\ref{lemma:fixed_point:unique} this fixed point is unique for sufficiently small $h$; therefore, this fixed point is the unique solution of \eqref{eqn:vem}. Additionally, from Lemma~\ref{lemma:fixed_point:unique} we know that the solution $\uh\in\ball{\uEpsInterp}{\zeta_{0}}$ and by the triangle inequality we have that
    \begin{align*}
        \|\uEps - \uh\|_{2,h} &\leq \|\uEps - \uEpsInterp\|_{2,h} + \|\uEpsInterp - \uh\|_{2,h}\\
        &\leq Ch^{r-1} \|\uEps\|_{r+1} + \zeta_{0} \\
        &\leq Ch^{r-1} \|\uEps\|_{r+1} + 2C_6(\varepsilon)h^{r-1}\|\uEps\|_{r+1}+2\hat{C}h^{r-1}\|f\|_{r-3} \\
        &\leq C_8(\varepsilon)h^{r-1}(\|\uEps\|_{r+1} + \|f\|_{r-3}).
    \end{align*}
\end{proof}

\begin{theorem}[$\Hk{1}$-error bound]\label{theorem:nonlinear:h1}
Suppose that Assumption~\ref{assumption:mesh} is satisfied. Let $\polOrder \geq 2$ be a positive integer and $\uEps \in \Hk{s+1}(\Omega)$, $s\geq 3$, be the solution of \eqref{eqn:vanishing_moment}--\eqref{eqn:vanishing_moment:extra_bc}. Define $r = \min(\polOrder, s)$ and assume that $f \in \Hk{r-2}(\Omega)$. Let $\uh \in \vemSpace{g}$ be the corresponding virtual element solution to \eqref{eqn:vem}; then,
for small enough $h$,
\[
   \norm{\uEps - \uh}_{1,\Omega} \leq C_{9}(\varepsilon)h^{r}(\|\uEps\|_{r+1}+ \norm{f}_{r-2}) + C_{10}
    (\varepsilon)h^{2r-2}(\|\uEps\|_{r+1}+ \norm{f}_{r-2})^2.
\]
where $C_{9}(\varepsilon) = CC_8(\varepsilon)\varepsilon^{-3} = \mathcal{O}(\varepsilon^{\nicefrac{-9}{2}})$ and $C_{10}(\varepsilon) = C\varepsilon^{-2}C_8^2(\varepsilon) = \mathcal{O}(\varepsilon^{-5})$ are positive constants, independent of $h$.
\end{theorem}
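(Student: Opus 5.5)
The bound will come from an Aubin--Nitsche duality argument built on the linearized operator $L_{\uEps}$. Write $e \coloneqq \uEps - \uh$. Since $\uh\in\vemSpace{g}\subset\Hk{1}_g(\Omega)$, we have $e\in\Hk{1}_0(\Omega)$, so by the Poincar\'e inequality it suffices to bound $\seminorm{e}_{1}$. For this I take the dual problem: find $\chi\in W$ with $L_{\uEps}(\chi) = -\laplace e$ (understood weakly, i.e. $\AL(w,\chi) = (\grad e,\grad w)$ for all $w\in W$), with homogeneous boundary data $\chi=0$, $\laplace\chi = 0$. I will assume the elliptic regularity $\norm{\chi}_{3}\le C\varepsilon^{-1}\seminorm{e}_1$, in line with the scaling of Proposition~\ref{prop:ueps_bounds} and \citet{neilan2009numerical}. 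This is the step where the $\varepsilon$-dependence enters, and I expect it to give the extra negative powers of $\varepsilon$ in $C_9$.

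Let $\chi_I\in\vemSpace{0}$ be the interpolant from Lemma~\ref{lemma:interpolation}, so that $\norm{\chi-\chi_I}_{2,h}\le Ch\norm{\chi}_3$. Testing the dual problem with $e$ and integrating by parts elementwise, as in Lemma~\ref{lemma:nonconforming:error}, gives
\[
\seminorm{e}_1^2 = \AL(e,\chi) - E(\chi,e),
\]
where the consistency term $E(\chi,e)$ involves only interior-edge jumps of $\partial e/\partial\normal_\edge$. Because $\uEps\in\Hk{2}$, these jumps are jumps of $\uh$ alone, and they have vanishing moments up to order $\polOrder-2$. Lemma~\ref{lemma:nonconforming:error_bound} with $r=2$ then gives $\abs{E(\chi,e)}\le C\varepsilon h\norm{\chi}_3\norm{e}_{2,h}$.

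Next I split $\AL(e,\chi)=\AL(e,\chi-\chi_I)+\AL(e,\chi_I)$. The first piece is at most $C_2(\varepsilon)\norm{e}_{2,h}\,h\norm{\chi}_3$. For the second piece I use the Taylor/mean-value expansion of Proposition~\ref{prop:mean_value} around $\uEps$:
\[
\bQL(\uEps,\cdot)-\bQL(\uh,\cdot) = \int \uEpsCof:\hess e\,(\cdot) \;-\; \int \det(\hess e)\,(\cdot) .
\]
This lets me write
\[
\AL(e,\chi_I) = -\bigl[\AQL(\uEps,\chi_I)-\AQLh(\uh,\chi_I)\bigr] + \bigl[\AQLh(\uh,\chi_I)-\AQL(\uh,\chi_I)\bigr] + \mathcal{R},
\]
with quadratic remainder $\mathcal{R}=\sum_K\int_K\det(\hess e)\chi_I\dx$.

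\begin{itemize}
\item The first bracket equals a right-hand-side consistency error, by \eqref{eqn:variational} and \eqref{eqn:vem}. Terms $Q_3$, $Q_5$ and $Q_6$ from Lemma~\ref{lemma:fixed_point:operator_bound} already give $h^{r-1}$. One extra power of $h$ comes from the superclose smoothness of $\chi_I$: pairing $f-f_h$ against $\chi_I-\LtwoProj{\element}{1}\chi_I$, which needs $f\in\Hk{r-2}$, and pairing edge residuals against $\chi_I$ with $\norm{\chi}_3$. This yields $h^{r}\norm{\chi}_3(\norm{\uEps}_{r+1}+\norm{f}_{r-2})$ up to powers of $\varepsilon$.
\item The second bracket is the VEM consistency error. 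I handle it as in Lemma~\ref{lemma:polyconsistency}, but using that $\hess\chi_I$ is close to a piecewise $\Hk{1}$ function, so that $(I-\LtwoProj{\element}{\polOrder-2})$ gains an extra $h$. The stabilizations are handled as in \eqref{eqn:fixed_point:contraction:stab:d4}--\eqref{eqn:fixed_point:contraction:stab:d3}, each giving $h^{2}$. The $\det(\hessProj\uh)$ versus $\det(\hess\uh)$ discrepancy is split through $\uEps$ using Proposition~\ref{prop:mean_value}.
\item The remainder satisfies $\abs{\mathcal{R}}\le C\norm{e}_{2,h}^2\norm{\chi_I}_{0,\infty}\le C\norm{e}_{2,h}^2\norm{\chi}_{3}$, by Sobolev embedding and the stability of the interpolant.
\end{itemize}

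Collecting everything, dividing by $\seminorm{e}_1$, using the regularity bound $\norm{\chi}_3\le C\varepsilon^{-1}\seminorm{e}_1$, and inserting Theorem~\ref{theorem:nonlinear:h2} for $\norm{e}_{2,h}$ gives the linear-in-error terms $C\,C_8(\varepsilon)\varepsilon^{-p}h^{r}(\cdots)$ and the quadratic term $C\varepsilon^{-2}C_8^2h^{2r-2}(\cdots)^2$, which is the claimed form.

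I expect the main obstacle to be the second bracket, specifically the nonlinear determinant term with projected Hessians. Getting the extra factor $h$ there requires careful use of $L^\infty$ bounds (Lemma~\ref{lemma:linfty_bound} and Corollary~\ref{corol:h2full}) and of the $\varepsilon$-bounds on $\uEpsCof$. I would first try to move every projection onto the smooth test side via $L^2$-orthogonality, as done for $E_1$ in Lemma~\ref{lemma:polyconsistency}.
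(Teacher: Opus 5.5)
\paragraph{Gap: a missing interior-edge term.}
There is a genuine gap in your decomposition of $\AL(e,\chi_I)$. Write $\eh=\uEps-\uh$ for your $e$, so that $\edge$ can denote an edge. You need to turn $\bQL(\uEps,\chi_I)-\bQL(\uh,\chi_I)=\sum_{\element\in\mesh}\int_\element\bigl(\uEpsCof:\hess\eh-\det(\hess\eh)\bigr)\chi_I\dx$ into the cofactor part of $\AL(\eh,\chi_I)$. To do so you tacitly use $\sum_{\element}\int_\element\uEpsCof:\hess\eh\,\chi_I\dx=-\int_\Omega\uEpsCof\grad\eh\cdot\grad\chi_I\dx$. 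This requires $\partial\eh/\partial\normal_\edge$ to be continuous across interior edges, which fails because $\uh$ is only $\Cp$-conforming.

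Integrate $\uEpsCof:\hess\eh=\diver(\uEpsCof\grad\eh)$ by parts element by element. Boundary edges drop out since $\chi_I\in\Hk{1}_0(\Omega)$, but the interior edges leave
\[ J\coloneqq\sum_{\edge\in\edgesI}\int_\edge(\uEpsCof\normal_\edge\cdot\normal_\edge)\jmp*{\frac{\partial\eh}{\partial\normal_\edge}}\chi_I\ds. \]
The correct identity is therefore
\[ \AL(\eh,\chi_I)=-\bigl[\AQL(\uEps,\chi_I)-\AQLh(\uh,\chi_I)\bigr]-\bigl[\AQLh(\uh,\chi_I)-\AQL(\uh,\chi_I)\bigr]+J-\mathcal{R}. \]
None of your three bullets covers $J$. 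It is exactly the paper's $Y_6^{II}$, and it is not of higher order.

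You can use the zero edge-mean of $\jmp{\partial\eh/\partial\normal_\edge}$ once, to get $\norm{\jmp{\partial\eh/\partial\normal_\edge}}_{0,\edge}\le Ch_\edge^{1/2}\seminorm{\eh}_{2,\omega_\edge}$. Even so, a direct estimate only gives $C\norm{\uEpsCof}_{0,\infty}\norm{\chi}_{3}\norm{\eh}_{2,h}=\mathcal{O}(h^{r-1})$, which loses the extra power of $h$. The missing idea is to use $\int_\edge\jmp{\partial\uh/\partial\normal_\edge}\,p\ds=0$ for $p\in\poly[0](\edge)$ a second time. This lets you replace the smooth factor $(\uEpsCof\normal_\edge\cdot\normal_\edge)\chi_I$ by its deviation from its edge mean. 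The paper does this by inserting $\LtwoProj{\edge}{0}$ of $\uEpsCof$, $\grad\eh$ and the dual solution. It obtains a bound of the form $Ch\norm{\eh}_{2,h}\norm{\uEpsCof}_{1}\norm{\chi}_{3}$. Since $\norm{\uEpsCof}_1=\mathcal{O}(\varepsilon^{-1})$, this term contributes $CC_8(\varepsilon)\varepsilon^{-3}h^r$, which is the full size of $C_9$.

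\paragraph{Two smaller points.}
First, the regularity $\norm{\chi}_3\le C\varepsilon^{-1}\seminorm{\eh}_1$ that you assume is stronger than what is available. The paper relies on \eqref{eqn:dual:bound}, which carries $\varepsilon^{-2}$. That power is what produces $C_9=CC_8\varepsilon^{-3}$ and, through $\mathcal{R}$, $C_{10}=C\varepsilon^{-2}C_8^2$. With your $\varepsilon^{-1}$ the quadratic term would carry $\varepsilon^{-1}C_8^2$, so the constants you state do not follow from your own assumption.

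Second, \eqref{eqn:fixed_point:contraction:stab:d4}--\eqref{eqn:fixed_point:contraction:stab:d3} alone give $Ch_\element^2\seminorm{\uh}_{2,\element}\seminorm{\chi_I}_{2,\element}$. Against the weight $\varepsilon h_\element^{-2}$ of $\StabQLa$ this is $\mathcal{O}(1)$. As in the paper's treatment of $Y_1$, you must first subtract $\LtwoProj{\element}{\polOrder}\uEps$ and $\LtwoProj{\element}{2}\chi$ from the two arguments; this is allowed because the stabilizations vanish on polynomials. The arguments are then $\mathcal{O}(h^{r-1})$ and $\mathcal{O}(h\norm{\chi}_{3})$ in the broken $\Hk{2}$-norm.

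With these repairs your argument is essentially the paper's duality proof. You organize it as $\seminorm{\eh}_1^2=\AL(\eh,\chi-\chi_I)+\AL(\eh,\chi_I)-E(\chi,\eh)$, whereas the paper works through the terms $Y_1,\dots,Y_6$.
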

\begin{proof}
See Section~\ref{section:nonlinear:h1}.
\end{proof}

\begin{theorem}[$\Lp$-error bound]\label{theorem:nonlinear:l2}
Suppose that Assumption~\ref{assumption:mesh} is satisfied. Let $\polOrder \geq 2$ be a positive integer and $\uEps \in \Hk{s+1}(\Omega)$, $s\geq 3$, be the solution of \eqref{eqn:vanishing_moment}--\eqref{eqn:vanishing_moment:extra_bc}. Define $r = \min(\polOrder, s)$ and assume that $f \in \Hk{r-1}(\Omega)$. Let $\uh \in \vemSpace{g}$ be the corresponding virtual element solution to \eqref{eqn:vem}; then,
for small enough $h$,
\[
    \|\uEps - \uh\|_{0,\Omega} \leq 
    \begin{cases}
        C_{11}(\varepsilon)h^{2}\|\uEps\|_{3} + C_{12}(\varepsilon)h^{5}\|\uEps\|^2_{3}, & \polOrder =2\\
        C_{11}(\varepsilon)h^{r+1}\|\uEps\|_{r+1}+ C_{12}(\varepsilon)h^{2r+1}\|\uEps\|^2_{r+1},& \polOrder \geq 3. 
    \end{cases} 
\]
where $C_{11}(\varepsilon) = CC_8(\varepsilon)\varepsilon^{-4} = \mathcal{O}(\varepsilon^{\nicefrac{-11}{2}})$, and $ C_{12}(\varepsilon) = C\varepsilon^{-3}C_8^2(\varepsilon)= \mathcal{O}(\varepsilon^{-6})$ are positive constants, independent of $h$.
\end{theorem}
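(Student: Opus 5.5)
We prove the $\Lp$-error bound of Theorem~\ref{theorem:nonlinear:l2} by an Aubin--Nitsche duality argument built on the linearized operator $L_{\uEps}$ of \eqref{eqn:linear_op}, mirroring the $\Hk{1}$ argument of Theorem~\ref{theorem:nonlinear:h1}. Set $e\coloneqq\uEps-\uh$, which lies in $\Hk{1}_0(\Omega)$ since $\vemSpace{g}$ is $\Cp$-conforming with trace $g$. Let $\phi\in W$ solve the dual linear problem \eqref{eqn:linear}--\eqref{eqn:linear:extra_bc} with $\varphi=e$ and $\psi=0$; $L_{\uEps}$ is self-adjoint because $\uEpsCof$ is symmetric. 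We assume the elliptic regularity $\norm{\phi}_{4}\le C\varepsilon^{-4}\norm{e}_{0}$ of \citet{neilan2009numerical}. This is the source of the $\varepsilon^{-4}$ factor in $C_{11}$, since $C_{11}=CC_8\varepsilon^{-4}$.

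Testing with $e$ and integrating by parts elementwise as in Lemma~\ref{lemma:nonconforming:error} gives
\[
\norm{e}_0^2=\AL(\phi,e)+E(\phi,e).
\]
Here $E(\phi,e)$ collects the edge terms involving $\jmp*{\partial e/\partial\normal_\edge}$. Because $\uEps$ is smooth and $\uh\in\Hnc{g}(\mesh)$, the jump moments vanish up to degree $\polOrder-2$. Lemma~\ref{lemma:nonconforming:error_bound}, applied with $r$ replaced by $\min(\polOrder,3)-1$, then gives
\[
\abs{E(\phi,e)}\le C\varepsilon h^{\min(\polOrder,3)-1}\norm{\phi}_{\min(\polOrder,3)}\norm{e}_{2,h}.
\]
For $\AL(\phi,e)$ we insert the interpolant $\phi_I\in\vemSpace{0}$ of Lemma~\ref{lemma:interpolation} and write
\[
\AL(\phi,e)=\AL(\phi-\phi_I,e)+\AL(\phi_I,e).
\]
The first term is at most $C_2(\varepsilon)h^{\min(\polOrder,3)-1}\norm{\phi}_{\min(\polOrder,3)+1}\norm{e}_{2,h}$. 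Theorem~\ref{theorem:nonlinear:h2} then supplies $h^{r-1}$, so this term is $\mathcal{O}(h^{r+1})$ for $\polOrder\ge3$ and $\mathcal{O}(h^2)$ for $\polOrder=2$, where only $h^1$ is gained from the dual. This explains the case split in the statement.

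The term $\AL(\phi_I,e)$ is handled through a Galerkin-type identity. Subtract \eqref{eqn:vem} tested with $\phi_I$ from the continuous equation \eqref{eqn:variational} and linearize $\bQL(\uEps,\cdot)-\bQL(\uh,\cdot)$ using Proposition~\ref{prop:mean_value}:
\[
\det(\hess\uEps)-\det(\hess\uh)=\uEpsCof:\hess e+\cof(\hess e):\hess e\cdot\tfrac12 .
\]
In two dimensions this expansion is exact. The linear part, after integration by parts using Proposition~\ref{prop:div_free}, reproduces the $\uEpsCof\grad e\cdot\grad\phi_I$ part of $\AL$. What remains splits into three groups.
\begin{enumerate}
\item Consistency errors between the VEM and continuous forms, namely projection, stabilization, $f-f_h$ and boundary normal-projection terms. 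These are bounded exactly as $Q_1$--$Q_6$ in Lemma~\ref{lemma:fixed_point:operator_bound}, and each gains an extra power of $h$ from $\phi_I$ via orthogonality of $\LtwoProjEle$, since $\norm{\phi_I-\LtwoProjEle\phi_I}$ is small in higher norms. They give $C h^{r+1}(\norm{\uEps}_{r+1}+\norm{f}_{r-1})\norm{\phi}_4$. This is why $f\in\Hk{r-1}$ is required.
\item The quadratic term $\int\det(\hess e)\,\phi_I$, bounded by $\norm{e}_{2,h}^2\norm{\phi_I}_{0,\infty}\le C\norm{e}_{2,h}^2\norm{\phi}_2$. By Theorem~\ref{theorem:nonlinear:h2} this is $C_8^2h^{2r-2}$. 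To reach the stated $h^{2r+1}$, one must instead use the divergence form \eqref{eqn:hess_eq_divcof} to move derivatives onto $\phi$, giving $\norm{\grad e}_{0,4}^2\norm{\phi}_{2,\infty}$-type bounds combined with the $\Hk{1}$ estimate of Theorem~\ref{theorem:nonlinear:h1}.
\item The projection error of $\uEpsCof$ in the discrete versus continuous linear part.
\end{enumerate}

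Finally we divide by $\norm{e}_0$, using the regularity bound for $\norm{\phi}_4$.

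The main obstacle is the quadratic remainder in the second group: extracting enough powers of $h$ from $\det(\hess e)$ requires using the cofactor divergence structure and the $\Hk{1}$ result rather than just $\norm{e}_{2,h}^2$. Tracking this also fixes the powers of $\varepsilon$ in $C_{12}=C\varepsilon^{-3}C_8^2$.
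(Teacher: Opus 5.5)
Your architecture matches the paper's sketch. You use an $\Hk{4}$ dual problem for $L_{\uEps}$ with homogeneous boundary data, a splitting modelled on Section~\ref{section:nonlinear:h1} into nonconformity, consistency and nonlinear pieces, and you explain the $\polOrder=2$ case by the dual interpolant gaining only one power of $h$. The genuine gap is the one you flag yourself, and your suggested repair does not close it.

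\textbf{The quadratic term.} Rewriting $\det(\hess e)$ in divergence form and integrating by parts twice on each element does give interior terms bounded by $\seminorm{e}_{1}^2\norm{\phi}_{2,\infty}$, which is $h^{2r}$ by Theorem~\ref{theorem:nonlinear:h1}. It also produces edge terms that do not cancel, because $\grad e$ jumps across interior edges. These terms pair $\jmp*{\partial e/\partial\normal_\edge}$ with derivatives of $e$ and $\phi$, and the weak continuity of $\vemSpace{g}$ only bounds them by about $h\norm{e}_{2,h}^2\sim h^{2r-1}$. So this route cannot reach $h^{2r+1}$. Also, $\norm{\grad e}_{0,4}$, as you wrote it, is not controlled by the $\Hk{1}$ estimate at all.

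Quadratic pieces are also hidden in your group~1. Comparing $\det(\hessProj\uh)$ with $\det(\hessProj\uEps)$, and the $\uh$-dependent coefficient of $\StabQLb$, produces $\norm{e}_{2,h}^2$-terms tested against $\valueProj\phi_I$ (the paper's $Y_6^{IV}$). The $Q_1$--$Q_6$ bounds were derived at $\uEpsInterp$ and do not cover these.

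\textbf{How the paper gets the extra powers.} The paper never moves derivatives onto the dual solution. In Section~\ref{section:nonlinear:h1} the quadratic pieces $Y_6^{I}$ and $Y_6^{IV}$ appear as $\norm{\eh}_{2,h}^2$ times $\norm{\Psi}_{0,\infty,\element}$ or $h_\element^{-1}\norm{\Psi}_{0,\element}$. Powers of $h$ are then extracted from $\Psi$ elementwise via Corollary~\ref{corol:h2full} and Lemma~\ref{lemma:sobolev}. With $\Psi\in\Hk{4}$ this turns $h_\element^{-1}\norm{\Psi}_{0,\element}$ into $h_\element^{3}\norm{\Psi}_{4,\element}$, which is evidently where $h^{2r-2}\cdot h^{3}=h^{2r+1}$ comes from.

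Be aware of two things in that step. It applies an inverse estimate stated only for $\localVem$ to the non-discrete $\Psi$. And Lemma~\ref{lemma:sobolev}, with the full norm on the right, cannot hold for constants. Without such element-scaling bounds you are left with a quadratic term of order $h^{2r-2}$ (crude bound) or $h^{2r-1}$ (your route). That still does not spoil the leading rates $h^{r+1}$ and $h^{2}$, but it does not give the two-term form as stated.

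\textbf{Further corrections.}
\begin{enumerate}
\item Your $\varepsilon$-bookkeeping does not reproduce the constants. With $\norm{\phi}_{4}\le C\varepsilon^{-4}\norm{e}_{0}$, the term $\AL(\phi-\phi_I,e)$ alone carries $C_2(\varepsilon)\varepsilon^{-4}C_8(\varepsilon)=\mathcal{O}(\varepsilon^{-5})C_8(\varepsilon)$. The crude quadratic term carries $\varepsilon^{-4}C_8^2(\varepsilon)$. By contrast, $C_{11}=\varepsilon^{-1}C_9$ and $C_{12}=\varepsilon^{-1}C_{10}$, which is consistent with a dual bound only one power of $\varepsilon$ worse than \eqref{eqn:dual:bound}, namely $\norm{\Psi}_{4}\lesssim\varepsilon^{-3}\norm{z}_{0}$.
\item Integrating $\uEpsCof:\hess e$ by parts elementwise does not simply reproduce the $\uEpsCof\grad e\cdot\grad\phi_I$ part of $\AL$. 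Since $\grad e$ is discontinuous, it leaves $\sum_{\element}\int_{\partial\element\setminus\partial\Omega}\uEpsCof\grad e\cdot\normal_\element\,\phi_I\ds$ (the paper's $Y_6^{II}$). This term needs its own estimate and brings in $\norm{\uEpsCof}_{1}=\mathcal{O}(\varepsilon^{-1})$.
\item In two dimensions, elementwise, $\det(\hess\uEps)-\det(\hess\uh)=\uEpsCof:\hess e-\tfrac12\cof(\hess e):\hess e$, with a minus sign on the quadratic term.
\end{enumerate}
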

\begin{proof}
The proof follows similarly to Theorem~\ref{theorem:nonlinear:h1} using the duality argument: For any $z \in \Lp(\Omega)$, let $\Psi \in \Hk{4}(\Omega) \cap \Hk{1}_{0}(\Omega)$ be the solution to the linear problem
\begin{alignat*}{2}
L_{\uEps}(\Psi) &= z && \qquad\text{in } \Omega, \\
\Psi &= 0 && \qquad\text{on } \partial \Omega, \\
\Delta \Psi &= 0 && \qquad\text{on } \partial \Omega.
\end{alignat*}
The proof then follows as in Section~\ref{section:nonlinear:h1}, using $\Lp$-projections into polynomial spaces one degree higher compared to the $\Hk{1}$-error bound for $\polOrder \geq 3$, and the same degree for $\polOrder=2$. For details, see \citet{pradhanPhD}.
\end{proof}
\begin{remark}{\label{remark:suboptimal}}
    We get suboptimal $\Lp$ error bounds for $\polOrder = 2$ as the duality argument used in the proof requires the dual solution satisfies $\Psi\in\Hk{4}(\Omega)$ which in turn requires $\polOrder + 1 \geq 4$; therefore, we are only able to derive optimal bounds for $\polOrder \geq 3$. This suboptimality for $\polOrder=2$ is also seen in the numerical experiments in Section~\ref{sec:numerics}.
\end{remark}

\subsection{Proof of Theorem~\ref{theorem:nonlinear:h1}: \texorpdfstring{$\Hk{1}$}{H1}-error estimates}\label{section:nonlinear:h1}
The proof of Theorem~\ref{theorem:nonlinear:h1} is based on the classical duality argument, following, for example, \citet{lascaux1975some}. Let  $\eh=\uEps-\uh\in\Hk{1}_{0}(\Omega)$, and we can define the dual problem: For any $ z \in \Hk{-1}(\Omega)$, find $\Psi \in \Hk{3}(\Omega) \cap \Hk{1}_0(\Omega)$ such that
\begin{alignat}{2}
L_{\uEps}(\Psi) &= z, && \qquad\text{in } \Omega, \label{eqn:dual} \\
\Psi &= 0, && \qquad\text{on } \partial \Omega, \label{eqn:dual:dbc} \\
\laplace \Psi &= 0, && \qquad\text{on } \partial \Omega, \label{eqn:dual:extra_bc}
\end{alignat}
with the linear operator $L_{\uEps}$ defined in \eqref{eqn:linear_op}.
We assume the domain $\Omega$ is convex and, therefore, from \citet{neilan2009numerical},
\begin{equation}\label{eqn:dual:bound}
        \|\Psi\|_{3} \leq C\varepsilon^{-2}\|z\|_{-1}
\end{equation}
and from Lemma~\ref{lemma:interpolation} we have that there exists a $\Psi_I\in\vemSpace{0}$ such that
\begin{equation}\label{eqn:dual:interpolation}
    \|\Psi-\Psi_I\|_{2,h} \leq Ch\|\Psi\|_3 \leq C\varepsilon^{-2}h\|z\|_{-1}.
\end{equation}
where $\norm{\cdot}_{-1}$ denotes the dual norm for $\Hk{-1}(\Omega)$.

Because $\eh\in \Hk{1}_0(\Omega)$, we can write:
\begin{equation}\label{eqn:dualh1}
    \norm{\eh}_{1,\Omega} = \sup_{z \in \Hk{-1}(\Omega)} \frac{|(\eh, z)|}{\|z\|_{-1}}.
\end{equation}
By multiplying \eqref{eqn:dual} with $\eh$, applying element-wise integration by parts and \eqref{eqn:hess_eq_biharmonic}, we have that
\begin{align*}
    (\eh, z) &= \varepsilon\sum_{\element\in\mesh}\int_\element \biharmonic \Psi \eh\dx + \sum_{\element\in\mesh}\int_\element\diver(\uEpsCof \grad\Psi)\eh\dx\\
    &=\varepsilon\sum_{\element\in\mesh}\int_\element\laplace \Psi \laplace \eh\dx+ \sum_{\element\in\mesh}\int_\element \uEpsCof \grad \Psi \cdot \grad \eh\dx - \varepsilon \sum_{\element\in\mesh}\int_{\partial \element\setminus\partial\Omega} \laplace \Psi \frac{\partial \eh}{\partial \normal_\element}\ds \\
    &= \AL(\Psi,\eh)-\varepsilon \sum_{\edge \in \edgesI} \int_\edge \left(\frac{\partial^2 \Psi}{\partial \normal_\edge \partial \tangential_\edge} \jmp*{\frac{\partial \eh}{\partial \tangential_\edge}}
+ \left(\Delta \Psi - \frac{\partial^2 \Psi}{\partial \tangential_\edge^2}\right) \jmp*{\frac{\partial \eh}{\partial \normal_\edge}}\right) \ds.
\end{align*}
Adding \eqref{eqn:vem} with $v_h = \Psi_I$, and applying the definitions of $\AL$ and $\AQLh$, we see that
\begin{align*}
    (\eh, z) &=  \AL(\Psi,\eh) - \aQLh(\uh,\Psi_I)-\bQLh(\uh,\Psi_I)\\&\quad+ (f_h,\Psi_I)+ \varepsilon \sum_{\edge\in\edgesB} \int_e \left( \frac{\partial^2 g}{\partial \tangential_\edge^2} -\varepsilon \right) \normalProj \Psi_I \ds\\
&\quad-\varepsilon \sum_{\edge \in \edgesI} \int_\edge \left(\frac{\partial^2 \Psi}{\partial \normal_\edge \partial \tangential_\edge} \jmp*{\frac{\partial \eh}{\partial \tangential_\edge}} + \left(\Delta \Psi - \frac{\partial^2 \Psi}{\partial \tangential_\edge^2}\right) \jmp*{\frac{\partial \eh}{\partial \normal_\edge}}\right) \ds  \\
 &= \left(\aQL(\uh,\Psi_I) - \aQLh(\uh,\Psi_I)\right)-\varepsilon \sum_{\edge\in\edgesB} \int_e \left( \frac{\partial^2 g}{\partial \tangential_\edge^2} -\varepsilon \right) \left(\normalProj \Psi_I - \frac{\partial\Psi_I}{\partial\normal_\edge} \right)\ds \\
&\quad+\varepsilon \sum_{\edge \in \edgesI} \int_\edge \left(\frac{\partial^2 \Psi}{\partial \normal_\edge \partial \tangential_\edge} \jmp*{\frac{\partial \eh}{\partial \tangential_\edge}} + \left(\Delta \Psi - \frac{\partial^2 \Psi}{\partial \tangential_\edge^2}\right) \jmp*{\frac{\partial \eh}{\partial \normal_\edge}}\right) \ds +  (f_h-f,\Psi_I) \\
&\quad+\left(\sum_{\edge\in\edgesB} \int_e \left( \frac{\partial^2 g}{\partial \tangential_\edge^2} -\varepsilon \right) \frac{\partial\Psi_I}{\partial\normal_\edge} \ds + (f,\Psi_I) - \AQL(\uEps,\Psi_I) \right) \\
&\quad+ \left(\AL(\Psi,\eh) -  \aQL(\uh,\Psi_I) + \AQL(\uEps,\Psi_I) - \bQLh(\uh,\Psi_I)\right) \\
    &\eqqcolon Y_1 + Y_2 + Y_3 + Y_4 + Y_5 + Y_6.
\end{align*}
We proceed to bound these six terms individually. For the first term $Y_1$, we follow \citet{cangiani2017conforming}. We first note that
\begin{align*}
    Y_1 &= \sum_{\element \in \mesh}\left(\aQL^\element(\uh,\Psi_I) - \aQLh^\element(\uh,\Psi_I)\right)\\
    &= \sum_{\element \in \mesh}\left(\aQL^\element(\uh - \LtwoProj{\element}\polOrder \uEps,\Psi_I - \LtwoProj{\element}2 \Psi) - \aQLh^\element(\uh- \LtwoProj{\element}\polOrder \uEps,\Psi_I- \LtwoProj{\element}2 \Psi)\right)\\
    &\quad+\sum_{\element \in \mesh}\left(\aQL^\element( \LtwoProj{\element}\polOrder \uEps,\Psi_I)-\aQLh^\element( \LtwoProj{\element}\polOrder \uEps,\Psi_I)\right)\\
    &\quad+\sum_{\element \in \mesh}\left(\aQL^\element( \uh,\LtwoProj{\element}2 \Psi)-\aQLh^\element( \uh,\LtwoProj{\element}2 \Psi)\right)\\
    &= \sum_{\element \in \mesh}\left(\aQL^\element(\uh - \LtwoProj{\element}\polOrder \uEps,\Psi_I - \LtwoProj{\element}2 \Psi) - \aQLh^\element(\uh- \LtwoProj{\element}\polOrder \uEps,\Psi_I- \LtwoProj{\element}2 \Psi)\right)\\
    &\quad+\sum_{\element \in \mesh}\varepsilon \left( (\hessProj \LtwoProj{\element}\polOrder\uEps, \hessProj\Psi_I)-( \hess \LtwoProj{\element}\polOrder \uEps,\hess\Psi_I)\right)\\
    &\quad+\sum_{\element \in \mesh}\varepsilon\left((\hessProj\uh,\hessProj\LtwoProj{\element}2 \Psi)-(\hess\uh,\hess\LtwoProj{\element}2 \Psi)\right)
\end{align*}
By applying Lemma~\ref{lemma:proj_eq_l2proj} and the definition of the $\Lp$-projection we get that
\begin{align*}
    Y_1 &= \sum_{\element \in \mesh}\left(\aQL^\element(\uh - \LtwoProj{\element}\polOrder \uEps,\Psi_I - \LtwoProj{\element}2 \Psi) - \aQLh^\element(\uh- \LtwoProj{\element}\polOrder \uEps,\Psi_I- \LtwoProj{\element}2 \Psi)\right)\\
    &\quad+\sum_{\element \in \mesh}\varepsilon (\hess \LtwoProj{\element}\polOrder\uEps, (\LtwoProj{\element}{\polOrder-2} - I)\hess\Psi_I)
    +\sum_{\element \in \mesh}\varepsilon((\LtwoProj{\element}{\polOrder-2} - I)\hess\uh,\hess\LtwoProj{\element}2 \Psi)\\
    &= \sum_{\element \in \mesh}\left(\aQL^\element(\uh - \LtwoProj{\element}\polOrder \uEps,\Psi_I - \LtwoProj{\element}2 \Psi) - \aQLh^\element(\uh- \LtwoProj{\element}\polOrder \uEps,\Psi_I- \LtwoProj{\element}2 \Psi)\right)\\
    &\quad+\sum_{\element \in \mesh}\varepsilon\left(((\LtwoProj{\element}{\polOrder-2}-I)\hess \LtwoProj{\element}\polOrder\uEps, \hess(\Psi_I-\Psi)) + ((\LtwoProj{\element}{\polOrder-2} - I)\hess \LtwoProj{\element}\polOrder\uEps, \hess(\Psi_I-\LtwoProj{\element}2\Psi))\right)\\
    &\quad+\sum_{\element \in \mesh}\varepsilon\left( ((\LtwoProj{\element}{\polOrder-2} - I)\hess(\uh-\uEps),\hess(\LtwoProj{\element}2-I)\Psi) + ((\LtwoProj{\element}{\polOrder-2} - I)\hess\uEps,\hess(\LtwoProj{\element}2-I) \Psi) \right. \\
    &\qquad\qquad\qquad+\left.(\hess(\uh-\LtwoProj{\element}\polOrder\uEps),(\LtwoProj{\element}{\polOrder-2}-I)\hess\Psi)
    + (\hess\LtwoProj{\element}\polOrder\uEps,(\LtwoProj{\element}{\polOrder-2}-I)\hess\Psi)\right)
\end{align*}
Using the fact that $\aQL^\element(\cdot,\cdot)$ and $\aQLh^\element(\cdot,\cdot)$ are continuous in the $\Hk{2}$-norm, Cauchy-Schwarz, and the triangle inequality we have that
\begin{align*}
    Y_1&\leq C_1(\varepsilon) \sum_{\element \in \mesh}\left( \norm{\uh - \uEps}_{2,\element} + \norm{\uEps-\LtwoProj{\element}\polOrder \uEps}_{2,\element}\left)\right(\norm{\Psi_I - \Psi}_{2,\element} - \norm{\Psi-\LtwoProj{\element}2 \Psi}_{2,\element}\right)\\
    &\quad+\sum_{\element \in \mesh}\varepsilon\left(\norm{(\LtwoProj{\element}{\polOrder-2}-I)\hess \LtwoProj{\element}\polOrder\uEps}_{0,\element}\norm{\Psi_I-\Psi}_{2,\element} + \norm{(\LtwoProj{\element}{\polOrder-2} - I)\hess \LtwoProj{\element}\polOrder\uEps}_{0,\element} \norm{\Psi_I-\LtwoProj{\element}2\Psi}_{2,\element}\right)\\
    &\quad+\sum_{\element \in \mesh}\varepsilon\left( \norm{\uh-\uEps}_{2,\element}\norm{(\LtwoProj{\element}2-I)\Psi}_{2,\element} + \norm{(\LtwoProj{\element}{\polOrder-2} - I)\hess\uEps}_{2,\element}\norm{(\LtwoProj{\element}2-I) \Psi}_{2,\element}\right) \\
    &\quad+\sum_{\element \in \mesh}\varepsilon \norm{\uh-\LtwoProj{\element}\polOrder\uEps}_{2,\element}\norm{(\LtwoProj{\element}{\polOrder-2}-I)\hess\Psi}_{0,\element} + 0.
\end{align*}
Applying Lemma~\ref{lemma:l2proj}, Lemma~\ref{lemma:interpolation}, \eqref{eqn:dual:interpolation}, Theorem~\ref{theorem:nonlinear:h2} and a Sobolev embedding gives
\begin{align}
    Y_1 \leq (CC_8(\varepsilon)\varepsilon^{-1} + C\varepsilon^{-1} )h^r(\|\uEps\|_{r+1}+\|f\|_{r-2})\|z\|_{-1}. \label{y1}
\end{align}

For $Y_2$ we apply the boundary conditions \eqref{eqn:vanishing_moment:dbc}--\eqref{eqn:vanishing_moment:extra_bc}, Lemma~\ref{lemma:l2proj}, Lemma~\ref{lemma:proj_eq_l2proj} and \eqref{eqn:dual:interpolation} to get that
\begin{align}
    Y_2
    &= \varepsilon\sum_{\edge\in\edgesB} \int_\edge\left(\frac{\partial^2 \uEps}{\partial \tangential^2_\edge} - \laplace \uEps\right)\left(\normalProj \Psi_I - \frac{\partial \Psi_I}{\partial \normal_\edge}\right) \ds \nonumber \\
    &= \varepsilon\sum_{\edge\in\edgesB} \int_\edge\left(\frac{\partial^2 \uEps}{\partial \tangential^2_\edge} - \laplace \uEps -\LtwoProj{\edge}{\polOrder-2}\left(\frac{\partial^2 \uEps}{\partial \tangential^2_\edge} - \laplace \uEps\right) \right)\left(\LtwoProj{\edge}{\polOrder-1} - I\right)\left(\frac{\partial \Psi_I}{\partial \normal_\edge}\right) \ds \nonumber \\
    &= \varepsilon\sum_{\edge\in\edgesB} \int_\edge\left(\frac{\partial^2 \uEps}{\partial \tangential^2_\edge} - \laplace \uEps -\LtwoProj{\edge}{\polOrder-2}\left(\frac{\partial^2 \uEps}{\partial \tangential^2_\edge} - \laplace \uEps\right) \right)\left(\LtwoProj{\edge}{\polOrder-1} - I\right)\left(\frac{\partial (\Psi_I-\Psi)}{\partial \normal_\edge}\right) \ds \nonumber \\
    &\quad+\varepsilon\sum_{\edge\in\edgesB} \int_\edge\left(\frac{\partial^2 \uEps}{\partial \tangential^2_\edge} - \laplace \uEps -\LtwoProj{\edge}{\polOrder-2}\left(\frac{\partial^2 \uEps}{\partial \tangential^2_\edge} - \laplace \uEps\right) \right)\left(\LtwoProj{\edge}{\polOrder-1} - I\right)\left(\frac{\partial \Psi}{\partial \normal_\edge}\right) \ds \nonumber \\
    &\leq C\varepsilon\sum_{\edge\in\edgesB} \left( h^{r-1-\frac{1}{2}}_e\|\uEps\|_{r+1,\omega_\element} h^{1-\frac{1}{2}}_e|\Psi_I - \Psi|_{2,\omega_\element} + h^{r-1-\frac{1}{2}}_e\|\uEps\|_{r+1,\omega_\element} h^{1-\frac{1}{2}}_e|\Psi|_{2,\omega_\element} \right) \nonumber \\
    &\leq C\varepsilon^{-1} h^{r}\|\uEps\|_{r+1}\|z\|_{-1} \label{y2}
\end{align}
The bound for $Y_3$ follows similarly,
\begin{align}
    Y_3 &= \varepsilon \sum_{\edge \in \edgesI} \int_\edge \left( 
\frac{\partial^2 \Psi}{\partial \normal_\edge \partial \tangential_\edge} \jmp*{\frac{\partial \eh}{\partial \tangential_\edge}}
- \left(\laplace \Psi - \frac{\partial^2 \Psi}{\partial \tangential_\edge^2}\right) \jmp*{\frac{\partial \eh}{\partial \normal_\edge}}\right) \ds \nonumber \\
&\leq C\varepsilon^{-1} h^{r}\|\uEps\|_{r+1}\|z\|_{-1} \label{y3},
\end{align}
For $Y_4$ we again apply Lemma~\ref{lemma:l2proj} and \eqref{eqn:dual:interpolation} to show that
\begin{align}
    Y_4 &= \sum_{\element \in \mesh} (f_h -f ,\Psi_I) = \sum_{\element \in \mesh}(\LtwoProj{\element}{\polOrder-2}f -f ,\Psi_I - \LtwoProj{\element}1 \Psi_I) \nonumber\\
    &\leq \sum_{\element \in \mesh}\|\LtwoProj{\element}{\polOrder-2}f -f\|_{0,\element} \|\Psi_I - \LtwoProj{\element}1 \Psi_I\|_{0,\element} \nonumber \\
    &\leq C\sum_{\element \in \mesh}h^{r-2}_\element\|f\|_{r-2,K}h^2_\element\|\Psi_I\|_{2,\element} \nonumber \\
    &\leq C\varepsilon^{-2}h^r \|f\|_{r-2}\|z\|_{-1}. \label{y4}
\end{align}

$Y_5$ is the nonconforming error and can be estimated in the same way as the term $Q_5$ in the proof of Lemma \ref{lemma:fixed_point:operator_bound}; then, we can apply \eqref{eqn:dual:interpolation} to deduce that
\begin{align}
    Y_5 &\leq C\varepsilon h^{r-1}\|\uEps\|_{r+1}\|\Psi - \Psi_I\|_{2,h} \leq C\varepsilon^{-1} h^{r}\|\uEps\|_{r+1}\|z\|_{-1}.\label{y5}
\end{align}

For $Y_6$, we first note that as $\uEps\vert_\element,\uh\vert_\element\in \Hk{2}(\element)$, for all $\element\in\mesh$, we have from \eqref{eqn:variational}--\eqref{eqn:variational:a} and Assumption~\ref{assumption:elementwise_operator} that
\[
    \sum_{\element \in \mesh} \varepsilon \int_\element\hess \eh:\hess \Psi\dx = -\sum_{\element \in \mesh}\aQL^\element(\eh,\Psi) = \sum_{\element \in \mesh}\left(\bQL^\element(\uEps,\Psi)-\bQL^\element(\uh,\Psi)\right).
\]
Hence, from the definition and symmetry of $\AL^\element$, and the definition of $\AQL^\element$, $Y_6$ can be written as
\begin{align*}
    Y_6 &= \sum_{\element \in \mesh} \left(\AL^\element(\Psi,\eh) - \aQL^\element(\uh,\Psi_I) + \AQL^\element(\uEps,\Psi_I)-\bQLh^\element(\uh,\Psi_I)\right)\\
    &= \sum_{\element \in \mesh}\aQL^\element(\eh,\Psi_I) +  \sum_{\element \in \mesh}\left(\int_\element \uEpsCof\nabla\eh\cdot\nabla\Psi\dx+\bQL^\element(\uEps,\Psi)-\bQL^\element(\uh,\Psi)\right) \\
    &\quad+ \sum_{\element \in \mesh}(\bQL^\element(\uEps,\Psi_I)-\bQLh^\element(\uh,\Psi_I)).
\end{align*}
By applying Proposition~\ref{prop:mean_value} and integration by parts there exists a constant $t\in[0,1]$ such that
\begin{align}
    Y_6 &= \sum_{\element \in \mesh}\left(\aQL^\element(\eh,\Psi_I)+ \int_\element (\sigma_\varepsilon - \uEpsCof) : \hess\eh\Psi\dx\right) + \sum_{\element \in \mesh}\int_{\partial\element\setminus\partial\Omega} \uEpsCof \grad\eh \cdot \normal_\element\Psi \ds\nonumber \\
    &\quad+\sum_{\element \in \mesh} \bQL^\element(\uEps,\Psi_I) + \sum_{\element \in \mesh}(\bQLh^\element(\uEps,\Psi_I) -\bQLh^\element(\uh,\Psi_I)) - \sum_{\element \in \mesh}\bQLh^\element(\uEps,\Psi) \nonumber\\
    &\eqqcolon Y_6^{I}+Y_6^{II}+Y_6^{III}+Y_6^{IV}+Y_6^{V}\label{y6:split}
\end{align}
where $\sigma_\varepsilon = \cof(\hess (\uEps - t\eh)) = \uEpsCof - t\cof(\hess\eh)$.

We consider the five terms of $Y_6$ separately. For $Y_6^I$, we apply Lemma~\ref{lemma:interpolation}, Corollary~\ref{corol:h2full}, Theorem~\ref{theorem:nonlinear:h2}, standard scaling arguments and Sobolev embeddings to deduce that
\begin{align}
    Y_6^I &= \sum_{\element \in \mesh}\left(-\varepsilon\int_\element \hess\eh:\hess\Psi_I\dx- \int_\element t\cof(\hess\eh) : \hess\eh\Psi\dx\right) \nonumber\\
    &\leq \varepsilon \sum_{\element \in \mesh} \norm{\eh}_{2,\element}\left(\norm{\Psi-\Psi_I}_{2,\element}+\norm{\Psi}_{2,\element}\right) + \sum_{\element\in\mesh} t\norm{\cof(\hess\eh)}_{0,2,\element}\norm{\eh}_{2,\element}\norm{\Psi}_{0,\infty,\element} \nonumber\\
    &\leq  C\left(\varepsilon^{-1}C_8(\varepsilon)h^r(\norm{\uEps}_{r+1} + \norm{f}_{r-2})+ \varepsilon^{-2} C_8^2(\varepsilon) h^{2r-2} (\norm{\uEps}_{r+1}+ \norm{f}_{r-2})^2\right)\norm{z}_{-1}.\label{y6:1}
\end{align}

For the boundary term, $Y_6^{II}$, we note that for an edge $\edge\in\edgesI$ of an element $K\in\mesh$ that 
as $\|v\|_{0,\infty,\edge} \leq \|v\|_{0,\infty,\element}$, for all $v \in \Wkp{m,p}(\element)$, $mp > 2$. Then, by
Corollary~\ref{corol:h2full}, properties of the $\Lp$-projection, cf. Lemma~\ref{lemma:l2proj}, and standard scaling arguments,
\begin{align*}
    \int_\edge \uEpsCof \grad\eh \cdot \normal_\edge\Psi \ds
    &= \int_\edge (\uEpsCof - \LtwoProj{\edge}{0} (\uEpsCof)(\grad\eh - \LtwoProj{\edge}{0}(\grad\eh))\cdot \normal_\edge\Psi \ds \nonumber \\
    &\quad+ \int_\edge \LtwoProj{\edge}{0} (\uEpsCof)(\grad\eh - \LtwoProj{\edge}{0}\grad\eh)\cdot \normal_\edge(\Psi - \LtwoProj{\edge}{0}\Psi) \ds\nonumber \\
    &\leq \|\uEpsCof - \LtwoProj{\edge}{0} (\uEpsCof)\|_{0,\edge}\|\grad\eh - \LtwoProj{\edge}{0}\grad\eh\|_{0,\edge}\|\Psi\|_{0,\infty,\edge}\nonumber \\
    &\quad+C\|\LtwoProj{\edge}{0} (\uEpsCof)\|_{0,\infty,\edge}\|\grad\eh - \LtwoProj{\edge}{0}\grad\eh\|_{0,\edge}\|\Psi - \LtwoProj{\edge}{0}\Psi\|_{0,\edge} \nonumber\\
    &\leq C\big(h^{\nicefrac{1}{2}}_\edge\|\uEpsCof\|_{1,\omega_\edge}h^{\nicefrac{1}{2}}_\edge\|\grad\eh\|_{1,\omega_\edge}h^{-1}_{\element}\|\Psi\|_{0,2,\element}\\&\quad\qquad+ h^{-1}_{\element}\|\uEpsCof\|_{0,2,\element}h^{\nicefrac{1}{2}}_\edge\|\grad\eh\|_{1,\omega_\edge}h^{\nicefrac{1}{2}}_\edge\|\Psi\|_{1,\omega_\edge}\big).
\end{align*}
Hence, by the fact that $\|\uEpsCof\|_{1} = \|\hess\uEps\|_1 = \|\uEps\|_3 = \mathcal{O}(\varepsilon^{-1})$, Proposition~\ref{prop:ueps_bounds}, \eqref{eqn:dual:interpolation}, Theorem~\ref{theorem:nonlinear:h2} and a Sobolev embedding,
\begin{align}
    Y_6^{II} &\leq Ch\|\eh\|_{2,h}(\|\uEpsCof\|_{1}\|\Psi\|_{3}+\|\uEpsCof\|_{1}\|\Psi\|_{3})\notag\\&\leq CC_8(\varepsilon)\varepsilon^{-3} h^{r}(\|\uEps\|_{r+1}+ \norm{f}_{r-2})\|z\|_{-1}.\label{y6:2}
\end{align}

For $Y_{6}^{III}$, we follow identical steps to \eqref{eqn:fixed_point:operator_bound:e2:t1} and apply \eqref{eqn:dual:interpolation}, to get that
\begin{align}
    Y_{6}^{III} =\sum_{\element \in \mesh} \bQL^\element(\uEps,\Psi_I)
    &\leq C \varepsilon^{-\nicefrac{1}{2}}h^{r-1}\|\uEps\|_{r+1}\left(|\Psi_I-\Psi|_{2,h}+|\Psi|_{2,h}\right) \notag\\&\leq C \varepsilon^{-\nicefrac{5}{2}}h^{r}\|\uEps\|_{r+1}\|z\|_{-1}. \label{y6:3}
\end{align}

For $Y_{6}^{IV}$, we note that bounding the stabilization term follows similarly to the bound of $\mathcal{S}$ in the proof of Lemma~\ref{lemma:fixed_point:contraction}; and so, by applying Proposition~\ref{prop:mean_value}, Corollary~\ref{corol:h2full}, and Lemma~\ref{lemma:proj_eq_l2proj} it follows that
\begin{align*}
    Y_6^{IV} &=\sum_{\element \in \mesh}\int_\element(\det(\hessProj\uEps)-\det(\hessProj\uh)):\hessProj\Psi_I\dx \\
    &\quad+ \sum_{\element \in \mesh}\left(\StabQLb(\uEps-\valueProj\uh,\Psi_I-\valueProj\Psi_I)-\StabQLb(\uh-\valueProj\uh,\Psi_I-\valueProj\Psi_I)\right) \\
    &\leq \sum_{\element \in \mesh}\|\cof(\hessProj\uEps - t\hessProj\eh)\|_{0,2,\element}\|\hessProj(\uEps -\uh)\|_{0,2,\element}\| \hessProj\Psi_I \|_{\infty,K} \\
    &\quad+ \sum_{\element\in\mesh}\|\cof(\hessProj[\element,0] \eh)\|_{0,\infty,\element}\sum_{\lambda\in\lambda^\element}  \abs*{\lambda(\uEps - \valueProj \uEps)\lambda(\Psi_I - \valueProj \Psi_I)} \\
    &\quad +\sum_{\element\in\mesh} \|\cof(\hessProj[\element,0] \uh)\|_{0,\infty,\element}  \sum_{\lambda\in\lambda^\element}\abs*{\lambda(\eh - \valueProj \eh)\lambda(\Psi_I - \valueProj \Psi_I)} \\
    &\leq C\sum_{\element \in \mesh} (\|\uEps\|_{2,\element}+\|\eh\|_{2,\element})\|\eh\|_{2,\element}h^{-1}_\element(\|\Psi\|_{0,2,\element}+\|\Psi - \Psi_I\|_{0,2,\element}) \\
    &\quad+C\sum_{\element\in\mesh} \left(\|\uEps\|_{2,\element}+\|\uh\|_{2,\element}\right)h_\element\|\eh\|_{2,\element}(\|\Psi\|_{2,\element}+\|\Psi - \Psi_I\|_{2,\element}).
\end{align*}
By applying Lemma~\ref{lemma:interpolation}, Sobolev embeddings, Proposition~\ref{prop:ueps_bounds} and \eqref{eqn:dual:interpolation},
\begin{align}
    Y_6^{IV} &\leq C\sum_{\element \in \mesh} (\|\uEps\|_{2,\element}\|\eh\|_{2,\element}+\|\eh\|_{2,\element}\|\eh\|_{2,\element})h_\element(\|\Psi\|_{2,\element}+\|\Psi - \Psi_I\|_{2,\element}) \nonumber \\
    &\leq C\varepsilon^{-\nicefrac{5}{2}}C_8(\varepsilon)h^r(\|\uEps\|_{r+1}+ \norm{f}_{r-2})\|z\|_{-1} \nonumber \\&\quad+ C\varepsilon^{-2}C_8^2(\varepsilon)h^{2r-2}(\|\uEps\|_{r+1}+ \norm{f}_{r-2})^2\|z\|_{-1} \label{y6:4}.
\end{align}
Finally, we note that
\begin{align}
    Y_6^{V} &\leq \sum_{\element \in \mesh}\int_\element\abs{\det(\hessProj\uEps):\hessProj\Psi}\dx \nonumber \\&\quad+ \sum_{\element\in\mesh}\|\cof(\hessProj[\element,0] \uEps)\|_{0,\infty,\element}\sum_{\lambda\in\lambda^\element}  \abs*{\lambda(\uEps - \valueProj \uEps)\lambda(\Psi - \valueProj \Psi)}  \nonumber\\
    &\leq C\varepsilon^{-\nicefrac{5}{2}}h^r\|\uEps\|_{r+1}\|z\|_{-1}. \label{y6:5}
\end{align}
Therefore, by combining \eqref{y1}--\eqref{y6:5} with \eqref{eqn:dualh1},
\begin{align*}
    \norm{\uEps - \uh}_{1,\Omega}&\leq C_{9}(\varepsilon)h^{r}(\|\uEps\|_{r+1}+ \norm{f}_{r-2}) + C_{10}
    (\varepsilon)h^{2r-2}(\|\uEps\|_{r+1}+ \norm{f}_{r-2})^2,
\end{align*}
which completes the proof. \qed

\section{Numerical experiments}\label{sec:numerics}
In this section, we present numerical experiments to validate the theoretical results. In particular, we investigate the vanishing moment methodology and the VEM formulation, i.e., we examine whether the vanishing moment solution $u_\varepsilon$ converges to the viscosity solution $u_0$ as $\varepsilon \rightarrow 0$, and verify the convergence rates in Theorems~\ref{theorem:nonlinear:h2}, \ref{theorem:nonlinear:h1} and~\ref{theorem:nonlinear:l2} for polynomial orders $\polOrder = 2,3,4$.

The code to perform the numerical experiments is implemented within the Distributed and Unified Numerics Enviroment (DUNE) software framework \citep{bastian2008generic}. The virtual element method has been implemented within the DUNE-FEM module \citep{dedner2010generic} and further implementation details can be found in \citet{dedner2024framework}. The code, and corresponding numerical results, for the experiments in this section can be found in \citet{zenodo}.

All experiments are performed on the domain $\Omega = (0,1)^2$, with both uniform quadrilateral meshes and quasi-uniform Voronoi meshes with mesh size $h$ such that $h_\element\approx h$, for all $\element\in\mesh$.
We note that concave meshes are not considered, as is standard in the VEM literature, because we assume in Section~\ref{sec:intro} that the computational domain will be convex; consequently, due to the mesh construction, the resulting mesh elements are also convex and meet Assumption~\ref{assumption:mesh}.

For these experiments, we will consider the alternative stabilizations
\begin{align*}
    \StabQLb(u_h,v_h) &\coloneqq
    C
    \sum_{\lambda\in\Lambda^\element} \lambda(u_h)\lambda(v_h), \\
    \StabL(u_h,v_h) &\coloneqq
    \left(\varepsilon h_K^{-2} + C\right)
    \sum_{\lambda\in\Lambda^\element}  \lambda(u_h)\lambda(v_h), 
\end{align*}
with $C=1$ instead of $\norm{\cof(\hessProj[\element,0] u_h)}_{0,\infty,\element}$ and $ \norm{\uEpsCof}_{0,\infty,\element}$, respectively; cf. \eqref{eqn:stab:ql:b}--\eqref{eqn:stab:linear}. As both are second order, if we use the fixed point map in \eqref{eqn:T} as a nonlinear solver, the constants for the stabilization should be the same, i.e., $\norm{\cof(\hessProj[\element,0] u_h)}_{0,\infty,\element} =  \norm{\uEpsCof}_{0,\infty,\element} = C$. We choose $C=1$ based on the observation that $\norm{\uEpsCof}_{0,\infty,\element} = \mathcal{O}(\varepsilon^{-1}) = \mathcal{C}\varepsilon^{-1}$, where $\mathcal{C}$ is some constant independent of the mesh size $h_\element$, and the exact value of $\mathcal{C}\varepsilon^{-1}$ is not known. Moreover, it is also not the correct scaling for $\uEpsCof \gradProj u_h \cdot \gradProj v_h$ as $\uEpsCof$ is treated as a constant in theory, but its numerical value is unknown. Therefore, for robustness, we choose to stick with a simple global constant. 

\subsection{Convergence of vanishing moment method to Monge--Amp\`ere}
\begin{figure}[t]
    \centering
    \subfloat[][Quadrilateral mesh]{\label{fig:eps:quads}%
    \begin{tikzpicture}
        \begin{loglogaxis}[
                xlabel={$\varepsilon$}, 
                ylabel={$\norm{\uh-u_0}$},
                LargePlot
            ]
            \addplot+[] table[x=Epsilon,y=H2] {results/eps_quads_p3.dat};
            \addplot+[] table[x=Epsilon,y=H1] {results/eps_quads_p3.dat};
            \addplot+[] table[x=Epsilon,y=L2] {results/eps_quads_p3.dat};
            \addplot+[] coordinates {(1,5) (0.0005,1.12151159)};
            \addplot+[] coordinates {(1,2.5) (0.0005,0.0167185076)};
            \addplot+[] coordinates {(1,2.5) (0.0005,0.00125)};
            \legend{$\Hk{2}$-error,$\Hk{1}$-error,$\Lp$-error}
        \end{loglogaxis}
    \end{tikzpicture}}\quad
    \subfloat[Voronoi mesh]{\label{fig:eps:voronoi}%
    \begin{tikzpicture}
        \begin{loglogaxis}[
                xlabel={$\varepsilon$}, 
                ylabel={$\norm{\uh-u_0}$},
                LargePlot
            ]
            \addplot+[] table[x=Epsilon,y=H2] {results/eps_voronoi_p3.dat};
            \addplot+[] table[x=Epsilon,y=H1] {results/eps_voronoi_p3.dat};
            \addplot+[] table[x=Epsilon,y=L2] {results/eps_voronoi_p3.dat};
            \addplot+[] coordinates {(1,5) (0.0005,1.12151159)};
            \addplot+[] coordinates {(1,2.5) (0.0005,0.0167185076)};
            \addplot+[] coordinates {(1,2.5) (0.0005,0.00125)};
            \legend{$\Hk{2}$-error,$\Hk{1}$-error,$\Lp$-error}
        \end{loglogaxis}
    \end{tikzpicture}}\medskip\\
    \caption{Convergence of $\uh$ to $u_0$ in the $\Hk{2}$-, $\Hk{1}$-, and $\Lp$-norms as $\varepsilon\to 0$ on \protect\subref{fig:eps:quads} uniform quadrilateral and \protect\subref{fig:eps:voronoi} quasi-uniform Voronoi meshes. Dashed lines show $\mathcal{O}(\varepsilon^{\nicefrac14})$, $\mathcal{O}(\varepsilon^{\nicefrac34})$, and $\mathcal{O}(\varepsilon)$ convergence in $\Hk{2}$-, $\Hk{1}$-, and $\Lp$-norms, respectively.}
    \label{fig:eps}
\end{figure}
We first perform the experiment \citet[Test 1(b)]{neilan2010nonconforming} for the virtual element method to examine whether $\uEps \rightarrow u_0$ as $\varepsilon \rightarrow 0$. To this end, we consider the error $\|\uh - u_0\|$ for $\varepsilon=1, 0.5, 0.25, 0.125, 0.05, 0.025, 0.0125, 0.005, 0.0025, 0.00125, 0.0005$, in the $\Lp$-, $\Hk{1}$- and $\Hk{2}$-norms, with $\polOrder=3$ on a fixed uniform quadrilateral or quasi-uniform Voronoi mesh with $h \approx 0.019$. Since $h$ is sufficiently small, this is effectively equivalent to computing the error $\uEps - u_0$ as the discretization error is negligible. We select the right-hand forcing function $f=4$ and boundary condition $g$ such that the analytical solution to \eqref{eqn:monge_ampere}--\eqref{eqn:monge_ampere:bc} is given by
\begin{equation}\label{eqn:u0}
    u_0(x,y) = x^2+y^2.
\end{equation}
We solve the resulting nonlinear system in \eqref{eqn:vem} using a Newton-Krylov method, and at each Newton step the linearized system is solved using a sparse direct solver.

The system resulting from \eqref{eqn:vem} will become ill-conditioned as $\varepsilon \rightarrow 0$. Therefore, to ensure convergence, we adopt a continuation strategy, using the solution to \eqref{eqn:vem} for a specific $\varepsilon \in (0,1)$ as the initial guess for solving with the next smaller value of $\varepsilon$. In Fig.~\ref{fig:eps}, we plot the $\Lp$-, $\Hk{1}$- and $\Hk{2}$-errors of $\|\uh - u_0\|$ versus $\varepsilon$. We see that all errors decrease as $\varepsilon$ tends to zero, which confirms that the vanishing moment method works in the VEM setting. From Fig.~\ref{fig:eps}, we see that $\norm{\uEps - u_0}_{2,h} \approx \mathcal{O}(\varepsilon^{\nicefrac{1}{4}}), \norm{\uEps - u_0}_{1,\Omega} \approx \mathcal{O}(\varepsilon^{\nicefrac{3}{4}})$ and $\norm{\uEps - u_0}_{0,\Omega} \approx \mathcal{O}(\varepsilon)$. These rates match with the existing results in the vanishing moment literature; cf., \citet{neilan2009numerical, neilan2010nonconforming}.

\begin{figure}%
    \centering%
    \subfloat[Quadrilateral mesh; $H^2$-error]{%
    \begin{tikzpicture}\label{fig:epshcoupling:h2}
        \begin{axis}[
            xlabel={$\varepsilon$}, 
            ylabel={$\| \hess u_0 - \hessProj \uh \|_{0,h}$},
            LargePlot,
            scaled ticks=false,
            yticklabel style={/pgf/number format/fixed},
            xticklabel style={/pgf/number format/fixed}
        ]
            \addplot+[] table[x=epsilon,y=H2] {results/stage3_paper_H2_q_1p00_quads_p3.dat};
            \pgfplotsset{cycle list shift=2}
            \addplot+[domain=0.0001:0.05, samples=100] {4.8074554137216658*x^0.25};
            \legend{Error ($h=\varepsilon$),$\mathcal{O}(\varepsilon^{\nicefrac14})$}
        \end{axis}
    \end{tikzpicture}}%
    \quad\subfloat[Quadrilateral mesh; $H^1$-error]{%
    \begin{tikzpicture}\label{fig:epshcoupling:h1}
        \begin{axis}[
            xlabel={$\varepsilon$}, 
            ylabel={$\| \grad u_0 - \gradProj \uh \|_{0,h}$},
            LargePlot,
            scaled ticks=false,
            yticklabel style={/pgf/number format/fixed},
            xticklabel style={/pgf/number format/fixed}
        ]
            \addplot+[] table[x=epsilon,y=H1] {results/stage3_paper_H1_q_1p00_quads_p3.dat};
            \pgfplotsset{cycle list shift=2}
            \addplot+[domain=0.0001:0.1, samples=100] {2.7502633841169182*x^.75};
            \legend{Error ($h=\varepsilon$),$\mathcal{O}(\varepsilon^{\nicefrac34})$}
        \end{axis}
    \end{tikzpicture}}\\%
    \quad\subfloat[Quadrilateral mesh; $L^2$-error]{%
    \begin{tikzpicture}\label{fig:epshcoupling:l2}
        \begin{axis}[
            xlabel={$\varepsilon$}, 
            ylabel={$\| u_0 - \valueProj \uh \|_{0,h}$},
            LargePlot,
            scaled ticks=false,
            yticklabel style={/pgf/number format/fixed},
            xticklabel style={/pgf/number format/fixed},
            yticklabel style={/pgf/number format/precision=3},
            xticklabel style={/pgf/number format/precision=3}
        ]
            \addplot+[] table[x=epsilon,y=L2] {results/stage3_paper_L2_q_2p00_quads_p3.dat};
            \pgfplotsset{cycle list shift=2}
            \addplot+[domain=0:0.01, samples=2] {1.7028578702482258*x};
            \legend{Error ($h=\varepsilon^{\nicefrac12}$),$\mathcal{O}(\varepsilon)$}
        \end{axis}
    \end{tikzpicture}}\medskip%
    \caption{Convergence for sequence of uniform quadrilateral meshes of $N\times N$ elements, for $N=10,20,30,40,50$, in the \protect\subref{fig:epshcoupling:h2} $\Hk{2}$-norm with $\varepsilon=h$, \protect\subref{fig:epshcoupling:h1} $\Hk{1}$-norm with $\varepsilon=h$, and \protect\subref{fig:epshcoupling:l2} $\Lp$-norm with $\varepsilon=h^2$.}
    \label{fig:epshcoupling}
\end{figure}%
We now perform a similar experiment to \citet[Test 3]{feng2009mixed}, where we attempt to numerical deduce a relationship between the mesh size $h$ and $\varepsilon$. To that end, we solve the problem where $u_0$ is given by \eqref{eqn:u0} on a uniform $N\times N$ quadrilateral meshes, for $N=10,20,30,40,50$, with $\varepsilon$ set as a function of $h$. We then perform least squares fitting to fit the error to the expected convergence rate in $\varepsilon$ from the previous experiment to find the best matching relationship for the expected convergence rate; namely, $\norm{\uEps - u_0}_{2,h} \approx \mathcal{O}(\varepsilon^{\nicefrac{1}{4}}), \norm{\uEps - u_0}_{1,\Omega} \approx \mathcal{O}(\varepsilon^{\nicefrac{3}{4}})$ and $\norm{\uEps - u_0}_{0,\Omega} \approx \mathcal{O}(\varepsilon)$. Fig.~\ref{fig:epshcoupling} shows the converge rate roughly matches the expected when we set $\varepsilon$ such that $h=\varepsilon$ for the $\Hk{1}$- and $\Hk{2}$-norms, and $h=\varepsilon^{\nicefrac12}$ for the $\Lp$-norm. A similar experiment was conducted for a quasi-uniform Voronoi mesh with identical results and, hence, are not included here; cf. \citet{pradhanPhD}.

\subsection{Validation of Theorems~\ref{theorem:nonlinear:h2}--\ref{theorem:nonlinear:l2}}
We now consider three problems with a known solution to examine whether the rate of convergence of the error in $\Hk{2}$-, $\Hk{1}$-, and $\Lp$-norm corresponds to the analytical bounds in Theorems~\ref{theorem:nonlinear:h2}, \ref{theorem:nonlinear:h1}, and~\ref{theorem:nonlinear:l2}, respectively. To this end, we fix $\varepsilon = 0.01$ and solve \eqref{eqn:vem} on a sequence of uniform quadrilateral and quasi-uniform Voronoi meshes with $11^2, 20^2, 40^2, 80^2, 160^2$, and $320^2$ elements, for polynomial orders $\polOrder = 2,3,4$. In this experiment we use an alterative boundary condition $\laplace \uEps = \phi_\varepsilon$, cf. \cite{feng2011vanishing}, instead of the original boundary condition $\laplace \uEps = \varepsilon$ in order to derive an exact known solution.

The numerical experiments are performed using the fixed point defined in \eqref{eqn:T} with added damping and a tolerance of $10^{-9}$. From our experience, we observed that the method is very sensitive to the initial value; therefore, we initially solve with a relatively large $\varepsilon = 1$ with a tolerance of $0.1$ and a maximum of $10$ iterations to generate the initial value.

\subsubsection{Problem 1}
\begin{figure}[p]%
    \centering%
    \subfloat[Quadrilateral mesh; $H^2$-error]{%
    \begin{tikzpicture}
        \begin{loglogaxis}[
            xlabel={$h$}, 
            ylabel={$\| \hess \uEps - \hessProj \uh \|_{0,h}$},
            LargePlot
        ]
            \addplot+[] table[x=h,y=H2] {results/test1_quads_p2.dat};
            \addplot+[] table[x=h,y=H2] {results/test1_quads_p3.dat};
            \addplot+[] table[x=h,y=H2] {results/test1_quads_p4.dat};
            \addplot+[domain=0.003125:0.1, samples=2] {x};
            \addplot+[domain=0.003125:0.1, samples=2] {0.1*x^2};
            \addplot+[domain=0.003125:0.1, samples=2] {0.01*x^3};
            \legend{$\polOrder=2$,$\polOrder=3$,$\polOrder=4$}
        \end{loglogaxis}
    \end{tikzpicture}}%
    \quad\subfloat[Voronoi mesh; $H^2$-error]{%
    \begin{tikzpicture}
        \begin{loglogaxis}[
            xlabel={$h$}, 
            ylabel={$\| \hess \uEps - \hessProj \uh \|_{0,h}$},
            LargePlot
        ]
            \addplot+[] table[x=h,y=H2] {results/test1_voronoi_p2.dat};
            \addplot+[] table[x=h,y=H2] {results/test1_voronoi_p3.dat};
            \addplot+[] table[x=h,y=H2] {results/test1_voronoi_p4.dat};
            \addplot+[domain=0.003125:0.1, samples=2] {x};
            \addplot+[domain=0.003125:0.1, samples=2] {0.1*x^2};
            \addplot+[domain=0.003125:0.1, samples=2] {0.01*x^3};
            \legend{$\polOrder=2$,$\polOrder=3$,$\polOrder=4$}
        \end{loglogaxis}
    \end{tikzpicture}}\\\bigskip%
    \subfloat[Quadrilateral mesh; $H^1$-error]{%
    \begin{tikzpicture}
        \begin{loglogaxis}[
            xlabel={$h$}, 
            ylabel={$\| \grad \uEps - \gradProj \uh \|_{0,h}$},
            LargePlot
        ]
            \addplot+[] table[x=h,y=H1] {results/test1_quads_p2.dat};
            \addplot+[] table[x=h,y=H1] {results/test1_quads_p3.dat};
            \addplot+[] table[x=h,y=H1] {results/test1_quads_p4.dat};
            \addplot+[domain=0.003125:0.1, samples=2] {0.4*x^2};
            \addplot+[domain=0.003125:0.1, samples=2] {0.5*x^3};
            \addplot+[domain=0.003125:0.1, samples=2] {0.4*x^4};
            \legend{$\polOrder=2$,$\polOrder=3$,$\polOrder=4$}
        \end{loglogaxis}
    \end{tikzpicture}}%
    \quad\subfloat[Voronoi mesh; $H^1$-error]{%
    \begin{tikzpicture}
        \begin{loglogaxis}[
            xlabel={$h$}, 
            ylabel={$\| \grad \uEps - \gradProj \uh \|_{0,h}$},
            LargePlot
        ]
            \addplot+[] table[x=h,y=H1] {results/test1_voronoi_p2.dat};
            \addplot+[] table[x=h,y=H1] {results/test1_voronoi_p3.dat};
            \addplot+[] table[x=h,y=H1] {results/test1_voronoi_p4.dat};
            \addplot+[domain=0.003125:0.1, samples=2] {0.4*x^2};
            \addplot+[domain=0.003125:0.1, samples=2] {0.5*x^3};
            \addplot+[domain=0.003125:0.1, samples=2] {0.2*x^4};
            \legend{$\polOrder=2$,$\polOrder=3$,$\polOrder=4$}
        \end{loglogaxis}
    \end{tikzpicture}}\\\bigskip%
    \subfloat[Quadrilateral mesh; $L^2$-error]{%
    \begin{tikzpicture}
        \begin{loglogaxis}[
            xlabel={$h$}, 
            ylabel={$\| \uEps - \valueProj \uh \|_{0,h}$},
            LargePlot
        ]
            \addplot+[] table[x=h,y=L2] {results/test1_quads_p2.dat};
            \addplot+[] table[x=h,y=L2] {results/test1_quads_p3.dat};
            \addplot+[] table[x=h,y=L2] {results/test1_quads_p4.dat};
            \addplot+[domain=0.003125:0.1, samples=2] {x^2};
            \addplot+[domain=0.003125:0.1, samples=2] {0.1*x^4};
            \addplot+[domain=0.003125:0.1, samples=2] {0.1*x^5};
            \legend{$\polOrder=2$,$\polOrder=3$,$\polOrder=4$}
        \end{loglogaxis}
    \end{tikzpicture}}%
    \quad\subfloat[Voronoi mesh; $L^2$-error]{%
    \begin{tikzpicture}
        \begin{loglogaxis}[
            xlabel={$h$}, 
            ylabel={$\| \uEps - \valueProj \uh \|_{0,h}$},
            LargePlot
        ]
            \addplot+[] table[x=h,y=L2] {results/test1_voronoi_p2.dat};
            \addplot+[] table[x=h,y=L2] {results/test1_voronoi_p3.dat};
            \addplot+[] table[x=h,y=L2] {results/test1_voronoi_p4.dat};
            \addplot+[domain=0.003125:0.1, samples=2] {x^2};
            \addplot+[domain=0.003125:0.1, samples=2] {0.1*x^4};
            \addplot+[domain=0.003125:0.1, samples=2] {0.05*x^5};
            \legend{$\polOrder=2$,$\polOrder=3$,$\polOrder=4$}
        \end{loglogaxis}
    \end{tikzpicture}}\bigskip%
    \caption{Problem 1: Convergence of the $\Hk{2}$- (top), $\Hk{1}$- (middle), and $\Lp$-error (bottom) with respect to mesh size $h$ on a uniform mesh of quadrilateral elements (left) and a quasi-uniform mesh of Voronoi elements (right) with approximation order $\polOrder=2,3,4$. Dashed lines show that the expected order of convergence ($\mathcal{O}(h^{\polOrder-1})$ for $\Hk{2}$-error, $\mathcal{O}(h^{\polOrder})$ for $\Hk{1}$-error and $\Lp$-error with $\polOrder = 2$, and $\mathcal{O}(h^{\polOrder+1})$ for $\Lp$-error with $\polOrder > 2$) matches the actual order.}
    \label{fig:test1}
\end{figure}
For the first problem we consider Test 6.2(b) from \citet{neilan2009numerical}. To this end, we define $f$, $g$, and $\phi_\varepsilon$ such that the solution of \eqref{eqn:vanishing_moment}--\eqref{eqn:vanishing_moment:extra_bc} is given by
\[
    \uEps(x,y) = \frac{x^4+y^4}{12}.
\]
\begin{figure}[p]%
    \centering%
    \subfloat[Quadrilateral mesh; $H^2$-error]{%
    \begin{tikzpicture}
        \begin{loglogaxis}[
            xlabel={$h$}, 
            ylabel={$\| \hess \uEps - \hessProj \uh \|_{0,h}$},
            LargePlot
        ]
            \addplot+[] table[x=h,y=H2] {results/test2_quads_p2.dat};
            \addplot+[] table[x=h,y=H2] {results/test2_quads_p3.dat};
            \addplot+[] table[x=h,y=H2] {results/test2_quads_p4.dat};
            \addplot+[domain=0.003125:0.1, samples=2] {x};
            \addplot+[domain=0.003125:0.1, samples=2] {2*x^2};
            \addplot+[domain=0.003125:0.1, samples=2] {.5*x^3};
            \legend{$\polOrder=2$,$\polOrder=3$,$\polOrder=4$}
        \end{loglogaxis}
    \end{tikzpicture}}%
    \quad\subfloat[Voronoi mesh; $H^2$-error]{%
    \begin{tikzpicture}
        \begin{loglogaxis}[
            xlabel={$h$}, 
            ylabel={$\| \hess \uEps - \hessProj \uh \|_{0,h}$},
            LargePlot
        ]
            \addplot+[] table[x=h,y=H2] {results/test2_voronoi_p2.dat};
            \addplot+[] table[x=h,y=H2] {results/test2_voronoi_p3.dat};
            \addplot+[] table[x=h,y=H2] {results/test2_voronoi_p4.dat};
            \addplot+[domain=0.003125:0.1, samples=2] {x};
            \addplot+[domain=0.003125:0.1, samples=2] {2*x^2};
            \addplot+[domain=0.003125:0.1, samples=2] {0.25*x^3};
            \legend{$\polOrder=2$,$\polOrder=3$,$\polOrder=4$}
        \end{loglogaxis}
    \end{tikzpicture}}\\\bigskip%
    \subfloat[Quadrilateral mesh; $H^1$-error]{%
    \begin{tikzpicture}
        \begin{loglogaxis}[
            xlabel={$h$}, 
            ylabel={$\| \grad \uEps - \gradProj \uh \|_{0,h}$},
            LargePlot
        ]
            \addplot+[] table[x=h,y=H1] {results/test2_quads_p2.dat};
            \addplot+[] table[x=h,y=H1] {results/test2_quads_p3.dat};
            \addplot+[] table[x=h,y=H1] {results/test2_quads_p4.dat};
            \addplot+[domain=0.003125:0.1, samples=2] {2*x^2};
            \addplot+[domain=0.003125:0.1, samples=2] {5*x^3};
            \addplot+[domain=0.003125:0.1, samples=2] {2*x^4};
            \legend{$\polOrder=2$,$\polOrder=3$,$\polOrder=4$}
        \end{loglogaxis}
    \end{tikzpicture}}%
    \quad\subfloat[Voronoi mesh; $H^1$-error]{%
    \begin{tikzpicture}
        \begin{loglogaxis}[
            xlabel={$h$}, 
            ylabel={$\| \grad \uEps - \gradProj \uh \|_{0,h}$},
            LargePlot
        ]
            \addplot+[] table[x=h,y=H1] {results/test2_voronoi_p2.dat};
            \addplot+[] table[x=h,y=H1] {results/test2_voronoi_p3.dat};
            \addplot+[] table[x=h,y=H1] {results/test2_voronoi_p4.dat};
            \addplot+[domain=0.003125:0.1, samples=2] {x^2};
            \addplot+[domain=0.003125:0.1, samples=2] {2*x^3};
            \addplot+[domain=0.003125:0.1, samples=2] {0.5*x^4};
            \legend{$\polOrder=2$,$\polOrder=3$,$\polOrder=4$}
        \end{loglogaxis}
    \end{tikzpicture}}\\\bigskip%
    \subfloat[Quadrilateral mesh; $L^2$-error]{%
    \begin{tikzpicture}
        \begin{loglogaxis}[
            xlabel={$h$}, 
            ylabel={$\| \uEps - \valueProj \uh \|_{0,h}$},
            LargePlot
        ]
            \addplot+[] table[x=h,y=L2] {results/test2_quads_p2.dat};
            \addplot+[] table[x=h,y=L2] {results/test2_quads_p3.dat};
            \addplot+[] table[x=h,y=L2] {results/test2_quads_p4.dat};
            \addplot+[domain=0.003125:0.1, samples=2] {x^2};
            \addplot+[domain=0.003125:0.1, samples=2] {x^4};
            \addplot+[domain=0.003125:0.1, samples=2] {0.1*x^5};
            \legend{$\polOrder=2$,$\polOrder=3$,$\polOrder=4$}
        \end{loglogaxis}
    \end{tikzpicture}}%
    \quad\subfloat[Voronoi mesh; $L^2$-error]{%
    \begin{tikzpicture}
        \begin{loglogaxis}[
            xlabel={$h$}, 
            ylabel={$\| \uEps - \valueProj \uh \|_{0,h}$},
            LargePlot
        ]
            \addplot+[] table[x=h,y=L2] {results/test2_voronoi_p2.dat};
            \addplot+[] table[x=h,y=L2] {results/test2_voronoi_p3.dat};
            \addplot+[] table[x=h,y=L2] {results/test2_voronoi_p4.dat};
            \addplot+[domain=0.003125:0.1, samples=2] {0.25*x^2};
            \addplot+[domain=0.003125:0.1, samples=2] {0.5*x^4};
            \addplot+[domain=0.003125:0.1, samples=2] {0.01*x^5};
            \legend{$\polOrder=2$,$\polOrder=3$,$\polOrder=4$}
        \end{loglogaxis}
    \end{tikzpicture}}\bigskip%
    \caption{Problem 2: Convergence of the $\Hk{2}$- (top), $\Hk{1}$- (middle), and $\Lp$-error (bottom) with respect to mesh size $h$ on a uniform mesh of quadrilateral elements (left) and a quasi-uniform mesh of Voronoi elements (right) with approximation order $\polOrder=2,3,4$. Dashed lines show that the expected order of convergence ($\mathcal{O}(h^{\polOrder-1})$ for $\Hk{2}$-error, $\mathcal{O}(h^{\polOrder})$ for $\Hk{1}$-error and $\Lp$-error with $\polOrder = 2$, and $\mathcal{O}(h^{\polOrder+1})$ for $\Lp$-error with $\polOrder > 2$) matches the actual order.}
    \label{fig:test2}
\end{figure}

Fig.~\ref{fig:test1} show the convergence of the error in the $\Hk{2}$-, $\Hk{1}$-, and $\Lp$-norm for the uniform quadilateral and quasi-uniform Voronoi meshes. Additionally, the dashed lines on these figures display the expected order of convergence of $\mathcal{O}(h^{\polOrder-1})$ for the $\Hk{2}$-error, $\mathcal{O}(h^{\polOrder})$ for the $\Hk{1}$-error, $\mathcal{O}(h^{2})$ for the $\Lp$-error with $\polOrder=2$, and $\mathcal{O}(h^{\polOrder+1})$ for the $\Lp$-error with $\polOrder\geq 3$; cf. Theorems~\ref{theorem:nonlinear:h2}, \ref{theorem:nonlinear:h1}, and~\ref{theorem:nonlinear:l2}, respectively. In general, the actual order of convergence obtained matches the expected order of convergence, although we appear to gain some super-convergence for the $\Hk{1}$-error on the quadrilateral mesh with $\polOrder=4$, and slight suboptimality for the $\Hk{2}$-error on the Voronoi mesh with $\polOrder=4$. Additionally, we note that on the finest mesh the $\Lp$-error for $\polOrder=4$ appears to rise; we note that this is likely due to machine precision issues and potentially the nonlinear solver tolerance. We also note that we get suboptimality for $\polOrder=2$ for the $\Lp$-error as mentioned in Remark~\ref{remark:suboptimal}.

\subsubsection{Problem 2}
We perform Test 6.2(a) from \citet{neilan2009numerical} and set  $f$, $g$, and $\phi_\varepsilon$ such that the solution of \eqref{eqn:vanishing_moment}--\eqref{eqn:vanishing_moment:extra_bc} is given by
\[
  \uEps(x,y) = \exp\left(\frac{x^2+y^2}{2}\right).
\]
In Fig.~\ref{fig:test2} we can see that the convergence in the $\Hk{2}$-, $\Hk{1}$-, and $\Lp$-norm for the uniform quadilateral and quasi-uniform Voronoi meshes conforms with Theorems~\ref{theorem:nonlinear:h2}, \ref{theorem:nonlinear:h1}, and~\ref{theorem:nonlinear:l2}, with the expected suboptimality for $\polOrder=2$ for the $\Lp$-error as mentioned in Remark~\ref{remark:suboptimal}.

\subsubsection{Problem 3}
\begin{figure}[p]%
    \centering%
    \subfloat[Quadrilateral mesh; $H^2$-error]{%
    \begin{tikzpicture}
        \begin{loglogaxis}[
            xlabel={$h$}, 
            ylabel={$\| \hess \uEps - \hessProj \uh \|_{0,h}$},
            LargePlot
        ]
            \addplot+[] table[x=h,y=H2] {results/test3_quads_p2.dat};
            \addplot+[] table[x=h,y=H2] {results/test3_quads_p3.dat};
            \addplot+[] table[x=h,y=H2] {results/test3_quads_p4.dat};
            \addplot+[domain=0.003125:0.1, samples=2] {0.25*x};
            \addplot+[domain=0.003125:0.1, samples=2] {x^2};
            \addplot+[domain=0.003125:0.1, samples=2] {0.01*x^3};
            \legend{$\polOrder=2$,$\polOrder=3$,$\polOrder=4$}
        \end{loglogaxis}
    \end{tikzpicture}}%
    \quad\subfloat[Voronoi mesh; $H^2$-error]{%
    \begin{tikzpicture}
        \begin{loglogaxis}[
            xlabel={$h$}, 
            ylabel={$\| \hess \uEps - \hessProj \uh \|_{0,h}$},
            LargePlot
        ]
            \addplot+[] table[x=h,y=H2] {results/test3_voronoi_p2.dat};
            \addplot+[] table[x=h,y=H2] {results/test3_voronoi_p3.dat};
            \addplot+[] table[x=h,y=H2] {results/test3_voronoi_p4.dat};
            \addplot+[domain=0.003125:0.1, samples=2] {0.25*x};
            \addplot+[domain=0.003125:0.1, samples=2] {0.25*x^2};
            \addplot+[domain=0.003125:0.1, samples=2] {0.02*x^3};
            \legend{$\polOrder=2$,$\polOrder=3$,$\polOrder=4$}
        \end{loglogaxis}
    \end{tikzpicture}}\\\bigskip%
    \subfloat[Quadrilateral mesh; $H^1$-error]{%
    \begin{tikzpicture}
        \begin{loglogaxis}[
            xlabel={$h$}, 
            ylabel={$\| \grad \uEps - \gradProj \uh \|_{0,h}$},
            LargePlot
        ]
            \addplot+[] table[x=h,y=H1] {results/test3_quads_p2.dat};
            \addplot+[] table[x=h,y=H1] {results/test3_quads_p3.dat};
            \addplot+[] table[x=h,y=H1] {results/test3_quads_p4.dat};
            \addplot+[domain=0.003125:0.1, samples=2] {0.5*x^2};
            \addplot+[domain=0.003125:0.1, samples=2] {x^3};
            \addplot+[domain=0.003125:0.1, samples=2] {0.05*x^4};
            \legend{$\polOrder=2$,$\polOrder=3$,$\polOrder=4$}
        \end{loglogaxis}
    \end{tikzpicture}}%
    \quad\subfloat[Voronoi mesh; $H^1$-error]{%
    \begin{tikzpicture}
        \begin{loglogaxis}[
            xlabel={$h$}, 
            ylabel={$\| \grad \uEps - \gradProj \uh \|_{0,h}$},
            LargePlot
        ]
            \addplot+[] table[x=h,y=H1] {results/test3_voronoi_p2.dat};
            \addplot+[] table[x=h,y=H1] {results/test3_voronoi_p3.dat};
            \addplot+[] table[x=h,y=H1] {results/test3_voronoi_p4.dat};
            \addplot+[domain=0.003125:0.1, samples=2] {0.5*x^2};
            \addplot+[domain=0.003125:0.1, samples=2] {0.8*x^3};
            \addplot+[domain=0.003125:0.1, samples=2] {0.05*x^4};
            \legend{$\polOrder=2$,$\polOrder=3$,$\polOrder=4$}
        \end{loglogaxis}
    \end{tikzpicture}}\\\bigskip%
    \subfloat[Quadrilateral mesh; $L^2$-error]{%
    \begin{tikzpicture}
        \begin{loglogaxis}[
            xlabel={$h$}, 
            ylabel={$\| \uEps - \valueProj \uh \|_{0,h}$},
            LargePlot
        ]
            \addplot+[] table[x=h,y=L2] {results/test3_quads_p2.dat};
            \addplot+[] table[x=h,y=L2] {results/test3_quads_p3.dat};
            \addplot+[] table[x=h,y=L2] {results/test3_quads_p4.dat};
            \addplot+[domain=0.003125:0.1, samples=2] {0.1*x^2};
            \addplot+[domain=0.003125:0.1, samples=2] {0.1*x^4};
            \addplot+[domain=0.003125:0.1, samples=2] {0.05*x^5};
            \legend{$\polOrder=2$,$\polOrder=3$,$\polOrder=4$}
        \end{loglogaxis}
    \end{tikzpicture}}%
    \quad\subfloat[Voronoi mesh; $L^2$-error]{%
    \begin{tikzpicture}
        \begin{loglogaxis}[
            xlabel={$h$}, 
            ylabel={$\| \uEps - \valueProj \uh \|_{0,h}$},
            LargePlot
        ]
            \addplot+[] table[x=h,y=L2] {results/test3_voronoi_p2.dat};
            \addplot+[] table[x=h,y=L2] {results/test3_voronoi_p3.dat};
            \addplot+[] table[x=h,y=L2] {results/test3_voronoi_p4.dat};
            \addplot+[domain=0.003125:0.1, samples=2] {0.05*x^2};
            \addplot+[domain=0.003125:0.1, samples=2] {0.1*x^4};
            \addplot+[domain=0.003125:0.1, samples=2] {0.05*x^5};
            \legend{$\polOrder=2$,$\polOrder=3$,$\polOrder=4$}
        \end{loglogaxis}
    \end{tikzpicture}}\bigskip%
    \caption{Problem 3: Convergence of the $\Hk{2}$- (top), $\Hk{1}$- (middle), and $\Lp$-error (bottom) with respect to mesh size $h$ on a uniform mesh of quadrilateral elements (left) and a quasi-uniform mesh of Voronoi elements (right) with approximation order $\polOrder=2,3,4$. Dashed lines show that the expected order of convergence ($\mathcal{O}(h^{\polOrder-1})$ for $\Hk{2}$-error, $\mathcal{O}(h^{\polOrder})$ for $\Hk{1}$-error and $\Lp$-error with $\polOrder = 2$, and $\mathcal{O}(h^{\polOrder+1})$ for $\Lp$-error with $\polOrder > 2$) matches the actual order.}
    \label{fig:test3}
\end{figure}
Finally, we perform the experiment Test 3.3(b) from \citet{neilan2009numerical} and set  $f$, $g$, and $\phi_\varepsilon$ such that the solution of \eqref{eqn:vanishing_moment}--\eqref{eqn:vanishing_moment:extra_bc} is given by
\begin{align*}
  \uEps(x,y) = x\sin(x)+y\sin(y).
\end{align*}
Again, we get the expected order of convergence in the $\Hk{2}$-, $\Hk{1}$-, and $\Lp$-norm as can be seen in Fig.~\ref{fig:test3} for the quadrilateral and quasi-uniform Voronoi meshes. Here, we note that on the finest meshes the errors appear to rise, which again is likely caused by machine precision and nonlinear solver tolerance.

\section{Conclusion}\label{sec:conclusion}
In this article, we have presented a $\Cp[1]$-nonconforming $\Cp[0]$-conforming virtual element formulation of the vanishing moment method for the approximation of the viscosity solution of the Monge--Amp\`ere equation in two dimensions. We have shown the existence of a unique solution of the virtual element approximation for a sufficiently small mesh size $h$. Additionally, we derived optimal a priori error bounds in the $\Hk{2}$- and $\Hk{1}$-norm, as well as an error bound in the $\Lp$-norm which is optimal for $\polOrder \geq 3$ and suboptimal for $\polOrder=2$. We provided numerical experiments on uniform quadrilateral and quasi-uniform Voronoi meshes which show that the vanishing moment method converges to the viscosity solution as the coefficient $\varepsilon$ of the added artificial biharmonic term tends to zero, and which validate the convergence rates in the a priori error bounds. The desired optimal error rates are achieved in the numerical experiments, and additionally note that we only achieve the suboptimal rate for the $\Lp$-norm for $\polOrder=2$ which was predicted by the analysis. These results provide an initial step to the analysis for a fully nonlinear VEM formulation, and the numerical approximation provides a useful method for an initial guess for solving a fully nonlinear formulation.

\section*{Funding}
This work has been supported by Charles University Research programme number PRIMUS/22/SCI/014.

\bibliographystyle{abbrvnat}
\bibliography{references}

\end{document}